\documentclass[a4paper,11pt,reqno, english]{amsart}

\usepackage{anysize,color}
\usepackage[utf8]{inputenc}
\usepackage[dvipsnames]{xcolor}
\usepackage[colorlinks=true,urlcolor=blue,linkcolor={magenta},citecolor={orange}]{hyperref}  
\usepackage{float}
\usepackage{amsfonts,amssymb,amsmath}
\usepackage{amsthm}    
\usepackage{url}
\usepackage{paralist}
\usepackage{tikz}
    \usetikzlibrary{decorations.markings,arrows,shapes.callouts}
\usepackage{rotating} 
\usepackage[mathscr]{euscript}
\usepackage{charter}
\usepackage{verbatim}
\usepackage{tikz-cd}
\usepackage{mathtools}
\usepackage{tabularx}
\usepackage{soul}
\usepackage[arrow,curve,matrix,tips,2cell]{xy}  
  \SelectTips{eu}{10} \UseTips
  \UseAllTwocells
 
 \usepackage{bbold}

\newtheorem*{theorem*}{Theorem}
\newtheorem{theorem}{Theorem}[section]

\newtheorem{lemma}[theorem]{Lemma} 
\newtheorem{problem}[theorem]{Problem}
\newtheorem{corollary}[theorem]{Corollary} 

\theoremstyle{definition}

\theoremstyle{remark}

\newcommand\R{\mathbb R}

\newcommand\Z{\mathbb Z}
\newcommand\N{\mathbb N}
\newcommand\Q{\mathbb Q}
\newcommand\FF{\mathbb F}

\newcommand\NN{\mathscr N}

\newcommand\U{\mathscr U}
\newcommand\CC{\mathscr C}

\newcommand\E{\mathbb{E}}
\newcommand\K{\mathscr{K}}
\newcommand\LL{\mathscr{L}}

\newcommand\pt{\mathrm{pt}}
 
\DeclareMathOperator{\vertex}{\mathrm{vert}}
\DeclareMathOperator{\spann}{\mathrm{span}}
\DeclareMathOperator{\im}{\mathrm{im}}

\DeclareMathOperator{\cone}{\mathrm{cone}}



\DeclareMathOperator{\sspan}{\mathrm{span}}

\DeclareMathOperator{\interior}{\mathrm{int}}
\DeclareMathOperator{\relint}{\mathrm{relint}}

\DeclareMathOperator{\conv}{\mathrm{conv}}

\DeclareMathOperator{\cl}{\mathrm{cl}}

\begin{document}

\dedicatory{Dedicated to the memory of Martin Aigner, a great man and an excellent mathematician}

\title[Colourful Carath\'eodory's Theorem plus a constraint]{Colourful Carath\'eodory's Theorem plus a constraint}

\author[Blagojevi\'c]{Pavle V. M. Blagojevi\'{c}}
\thanks{The research by Pavle V. M. Blagojevi\'{c} leading to these results has
received funding from the Serbian Ministry of Science, Technological development and Innovations. It was also supported by G\"unter Ziegler's group at the Freie Universit\"at Berlin ``Arbeitsgruppe Diskrete Geometrie und Topologische Kombinatorik''. My gratitude goes to Berlin Mathematical School for supporting my PhD students.}
\address{Inst. Math., FU Berlin, Arnimallee 2, 14195 Berlin, Germany\hfill\break
	\mbox{\hspace{4mm}}Mat. Institut SANU, Knez Mihailova 36, 11001 Beograd, Serbia}
\email{blagojevic@math.fu-berlin.de \& pavleb@mi.sanu.ac.rs}

\begin{abstract}
	We develop a topological framework in an attempt to generalise the classical colourful Carath\'eodory theorem by imposing an additional constraint.
	For that, we introduce the notion of zero-avoding complexes and covering criteria for the existence of colourful transversals.
	Using the developed method in combination with the homological Nerve theorem of Meshulam we recover all known versions of the colourful Carath\'eodory's theorem and prove a constraint extension which, in particular, implies an extension of the original (affine) Tverberg result.
\end{abstract}

\maketitle

\bigskip
\section{Introduction and statement of central results}
\bigskip

%
%
B\'ar\'any's classical colourful Carath\'eodory's theorem~\cite{Barany1982} from 1982 is one of the cornerstones of Combinatorial Convexity which, in particular, transforms Tverberg's seminal result~\cite{Tverberg1966}, via Sarkaria's tensor trick~\cite{Sarkaria1992}, into a mere corollary.
Furthermore, it yields multiple applications, like colourful versions of Fenchel's and Kirchberger's theorem~\cite{Barany2021}.
Important extensions of the colourful Carath\'eodory theorem were obtained only recently by Arocha, B\'ar\'any, Bracho, Fabila \& Montejano~\cite{ArochaBaranyBrachoFabilaMontejano2009}, Holmsen, Pach \& Tverberg~\cite{HolmsenPachTverberg2008}, and Kalai \& Meshulam~\cite{KalaiMeshulam2005}.
This turns the colourful Carath\'eodory theorems into fundamental and intriguing results.

\medskip
In the following, when working with a simplicial complex, for brevity, we always have in mind all three variants: the abstract simplicial complex (family of subsets of the set of vertices), the geometric simplicial complex (family of simplices in a Euclidean space), and finally the geometric realisation of simplicial complex (topological space).
Hence, from now on, in the statements of this paper the context determines which instance of a simplicial complex is assumed, allowing us to avoid unnecessarily complex notation yoga.
This should not cause any confusion.
More details on simplicial complexes can be found, for example, in the classical soucre~\cite{Munkres1984}.

\medskip
For integers $d\geq 1$ and $N\geq 0$, we denote by $\R^d$ a $d$-dimensional Euclidean affine space and by $\Delta_N$ an $N$-dimensional simplex.
For a simplicial complex $\K$ we denote its vertex set by $\vertex (\K)$.
On the vertex set $V:=\vertex(\K)$, and a subset of its vertices $U\subseteq V$ we define the {\em induced sub-complex} on $U$ by
\[
	\K[U]:=\{ F\subseteq U : F\in\K\}.
\]
In particular, $\K=\K[V]$.
Here we think of $\K$ as an abstract simplicial complex which allowed us to give a simple definition of an induced (abstract) sub-complex.
Additionally, if $r\geq 1$ is an integer, then we denote by $\K^{*r}$ the $r$-fold join and by $\K^{*r}_{\Delta(2)}$ the $r$-fold $2$-wise deleted join of the simplicial complex  $\K$.
In particular, $\K^{*1}\cong \K^{*1}_{\Delta(2)}\cong \K$.
For more insight in the definition and properties of deleted joins consult for example~\cite[Sec.\,6.3]{Matousek2008} and~\cite[Sec,\,3.2.2]{BlagojevicZiegler2017}.

\medskip
\subsection{The general colourful Carath\'eodory problem}
The main result of the classical 1982 work of B\'ar\'any~\cite[Thm.\,2.1]{Barany1981}, when translated into the language of affine maps, says the following.

\medskip
\begin{theorem}[The colourful Carath\'eodory I]
	\label{th : colourful_Caratheodory_01}
	Let $N\geq 1$ and $d\geq 1$ be integers, and assume that $A\colon \Delta_N\longrightarrow\R^d$ is an affine map.

	\smallskip\noindent
	If there exists an integer $r\geq d+1$ and $r$ pairwise disjoint non-empty faces $F_1,\dots ,F_{r}$ of the simplex $\Delta_N$ such that
	$
		0\in A(F_1)\cap\cdots\cap A(F_r)
	$,
	then there exists a selection of vertices $p_1\in F_1,\dots, p_r\in F_r$ with the property that $0\in A(J)$ where $J=\conv\{p_1,\dots,p_r\}\subseteq\Delta_N$ is the face of the simplex $\Delta_N$ spanned by the selected vertices  $p_1,\dots,p_r$.
\end{theorem}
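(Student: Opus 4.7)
\emph{Proof plan.} I propose to carry out B\'ar\'any's classical ``minimum distance'' argument, adapted to the present affine-map formulation. First, I would reduce to the extremal case $r=d+1$: if $r>d+1$, it suffices to exhibit vertices $p_1\in F_1,\dots,p_{d+1}\in F_{d+1}$ with $0\in\conv\{A(p_1),\dots,A(p_{d+1})\}$, since any further choices $p_i\in F_i$ for $i>d+1$ only enlarge the convex hull, and the affinity of $A$ then yields $0\in A(\conv\{p_1,\dots,p_r\})$ automatically. So assume $r=d+1$.

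Next, I would consider the finite collection of all ``rainbow'' selections $\mathbf{p}=(p_1,\dots,p_{d+1})$ with $p_i\in\vertex(F_i)$, attaching to each the simplex $\sigma(\mathbf{p}):=\conv\{A(p_1),\dots,A(p_{d+1})\}\subseteq\R^d$. Choose $\mathbf{p}^*$ minimising $\dist(0,\sigma(\mathbf{p}))$, write $J^*:=\sigma(\mathbf{p}^*)$, and let $q$ be the nearest point of $J^*$ to $0$. Assume for contradiction that $q\neq 0$; then $q$ lies on the relative boundary of $J^*$, on a facet opposite some vertex $A(p_k^*)$. The hyperplane $H$ through $q$ orthogonal to the segment $\overline{0q}$ separates $0$ from $J^*$, so $J^*\subseteq H^+$ and $0\in H^-$, where $H^-$ denotes the open half-space containing the origin.

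Since $0\in A(F_k)=\conv\{A(v):v\in\vertex(F_k)\}$ with $0\in H^-$, at least one vertex $v^\circ\in\vertex(F_k)$ must satisfy $A(v^\circ)\in H^-$. Replacing $p_k^*$ by $v^\circ$ yields a new rainbow simplex $J'$ retaining the facet $F\subseteq H^+$ opposite $p_k^*$ while gaining the new vertex $A(v^\circ)\in H^-$. Moving a short distance from $q$ along the edge in $J'$ toward $A(v^\circ)$ strictly decreases the distance to $0$, because this direction has strictly positive inner product with the outward direction $q\to 0$; hence $\dist(0,J')<\dist(0,J^*)$, contradicting the minimality of $\mathbf{p}^*$. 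Consequently $0\in J^*$, i.e., $0\in A(\conv\{p_1^*,\dots,p_{d+1}^*\})$, as required.

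The only delicate geometric step is this strict decrease in distance under replacement: one must carefully identify which facet of $J^*$ contains the foot of perpendicular $q$ (handling the possibility that $q$ lies on a lower-dimensional face shared by several facets, in which case several candidate indices $k$ are available) and verify the directional-derivative computation along the new edge. All other ingredients are either a combinatorial reduction, finiteness of the rainbow search space, or standard convex geometry. Notably, this proof is entirely elementary and avoids the deleted-join/nerve machinery developed later in the paper, and would serve as a sanity check against the topological framework used there to prove the constrained extension.
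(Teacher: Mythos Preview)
Your argument is correct; it is B\'ar\'any's original pivoting proof. One small caveat worth making explicit: when $J^*$ is degenerate (the $d+1$ image points affinely dependent), the nearest point $q$ need not lie on the relative boundary of $J^*$ at all, so your ``facet opposite $A(p_k^*)$'' description does not literally apply. But Carath\'eodory in the affine span of $J^*$ still furnishes an index $k$ with $q\in\conv\{A(p_i^*):i\neq k\}$, and the replacement step then proceeds unchanged. You allude to lower-dimensional faces but not to this degeneracy of $J^*$ itself; it is a one-line fix.

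The paper takes a completely different route. It forms $Z=\CC_A^**\mathcal{K}$ with $\mathcal{K}=\vertex(F_1)*\cdots*\vertex(F_r)$; assuming no rainbow face captures $0$, the covering scheme (Lemma~\ref{lem : affine CS/TM scheme 02 caratheodory}) covers $Z$ by induced sub-complexes $X^*[U]*Y[U]$ whose intersections have controlled connectivity via the results of Section~\ref{sec:ZeroAvoidingComplexesAlexanderDualsInducedSubcomplexes}, and Bj\"orner's Nerve theorem then forces $\widetilde{H}_{N-1}(Z;\Q)\cong\Q$, contradicting a direct K\"unneth computation that places the only nontrivial homology of $Z$ in degree $N+r-d-1\geq N$. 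Your proof is elementary and self-contained, but the pivoting step is tied to $\mathcal{K}$ being the full join of discrete sets: when the index $k$ is to be replaced, you need \emph{every} vertex of $F_k$ to be combinable with the retained facet, which is exactly what fails once edges are deleted as in Theorem~\ref{th : main_result_2}. The paper's machinery is heavier but uniform --- the same template, with only minor changes in the connectivity bookkeeping for $Y[U]$, proves Theorems~\ref{th : colourful_Caratheodory_01}--\ref{th : colourful_Caratheodory_03} and the new constrained Theorem~\ref{th : main_result_2} alike.
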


\medskip
For an illustration of Theorem~\ref{th : colourful_Caratheodory_01} observe Figure ~\ref{fig c01} where $A(\vertex(F_1))$, $A(\vertex(F_2))$ and $A(\vertex(F_3))$ are indicated by different colors.

\begin{figure}[h]
	\includegraphics[scale=1.1]{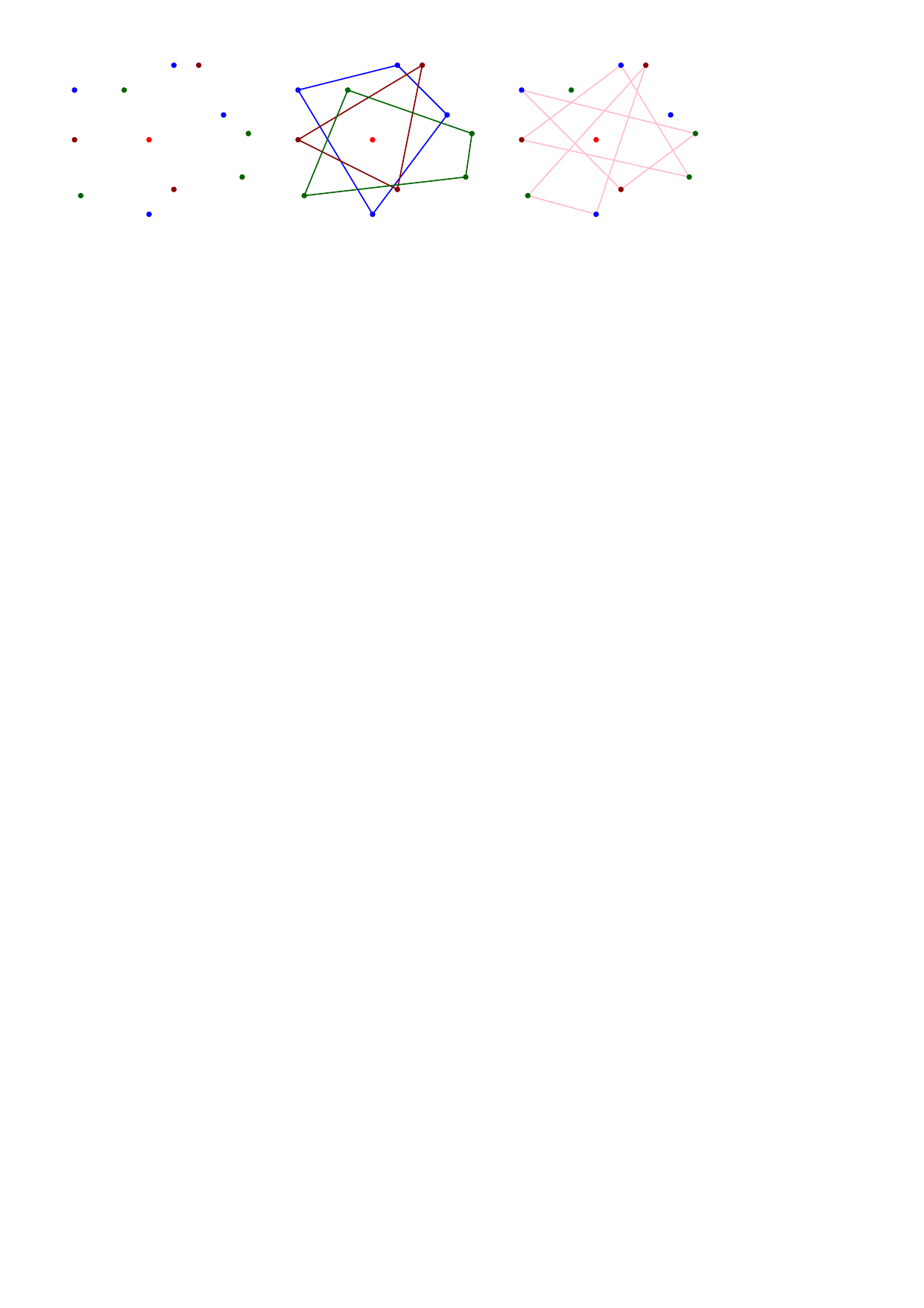}
	\caption{\small An illustration of Theorem~\ref{th : colourful_Caratheodory_01} where the red point is the zero in the plane.}
	\label{fig c01}
\end{figure}

\medskip
Already in the original paper of B\'ar\'any one encounters the first extension of the previous result, see~\cite[Thm.\,2.3]{Barany1981}.

\medskip
\begin{theorem}[The colourful Carath\'eodory II]
	\label{th : colourful_Caratheodory_02}
	Let $N\geq 1$ and $d\geq 1$ be integers, and let $A\colon \Delta_N\longrightarrow\R^d$ be an affine map.

	\smallskip\noindent
	If there exists an integer $r\geq d+1$ and $r$ pairwise disjoint non-empty faces $F_1,\dots, F_{r}$ of the simplex $\Delta_N$ such that
	$
		0\in A(F_1)\cap\cdots\cap A(F_{r-1})
	$,
	then for an arbitrary vertex $p\in F_r$ there exists a selection of vertices $p_1\in F_1,\dots, p_{r-1}\in F_{r-1}$ with the property that $0\in A(J)$ where $J=\conv\{p_1,\dots,p_{r-1},p\}\subseteq\Delta_N$ is the face of the simplex $\Delta_N$ spanned by the vertices $p_1,\dots,p_{r-1},p$.
\end{theorem}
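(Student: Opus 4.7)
The case $A(p)=0$ is trivial: any colourful selection works, since $A(p)=0\in A(J)$. If $r\geq d+2$, then the colour classes $F_1,\ldots,F_{r-1}$ already satisfy the hypotheses of Theorem~\ref{th : colourful_Caratheodory_01} with $r-1\geq d+1$, so that theorem produces a selection $p_1,\ldots,p_{r-1}$ with $0\in\conv\{A(p_1),\ldots,A(p_{r-1})\}\subseteq A(J)$, and the anchor $p$ plays no role. The essential case is therefore $r=d+1$ with $q:=A(p)\neq 0$.

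For this case, I would follow B\'ar\'any's minimisation strategy: let $(p_1,\ldots,p_d)$ minimise $\delta:=\dist(0,T)$, where $T=\conv\{A(p_1),\ldots,A(p_d),q\}$, and argue $\delta=0$ by contradiction. Assuming $\delta>0$, let $y\in T$ be the nearest point to the origin, $H=\{x\in\R^d:\langle x,y\rangle=\|y\|^2\}$ the supporting hyperplane, and $f\subseteq T\cap H$ the minimal face of $T$ with $y$ in its relative interior; then $|\vertex(f)|\leq d$. Since $T$ uses all $d+1$ colours and $|\vertex(f)|\leq d$, at least one colour is absent from $f$. When that absent colour is some $F_i$ with $i\leq d$ (so $A(p_i)\notin f$), the classical swap closes the case: pick $v\in\vertex(F_i)$ with $\langle A(v),y\rangle<\|y\|^2$ (which exists because $0\in A(F_i)$); the face $f$ is still contained in the swapped simplex $T'$, so $y\in T'$, and the perturbation $y+t(A(v)-A(p_i))$ strictly decreases the distance for small $t>0$, contradicting minimality.

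The main obstacle is the residual configuration in which the only colour absent from $f$ is $F_r$, so that $f=\conv\{A(p_1),\ldots,A(p_d)\}$ is the facet of $T$ opposite $q$. Here $p$ may not be swapped, and after swapping any $A(p_i)$ the point $y$ generically leaves $T'$, so the simple perturbation no longer applies. My plan is to enlarge $T$ to $\Sigma=\conv\bigl(T\cup\{A(v_1),\ldots,A(v_d)\}\bigr)$, with $v_i\in\vertex(F_i)$ chosen so that $\langle A(v_i),y\rangle\leq 0$ (such a vertex exists because $0\in A(F_i)$ forces some vertex into the closed half-space $\{\langle x,y\rangle\leq 0\}$); the nearest point of $\Sigma$ to the origin then has norm strictly below $\delta$ by perturbation from $y$ towards any $A(v_i)$. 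To conclude, one would like to extract from $\Sigma$ a colourful sub-simplex $T'=\conv\{A(s_1),\ldots,A(s_d),q\}$ with $s_i\in\{p_i,v_i\}$ still containing that nearest point, which would yield $\dist(0,T')<\delta$ and contradict minimality.

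Supplying this ``anchored'' Carath\'eodory step is the genuinely hard part of the proof, because the classical reduction cannot be made to respect both the single-vertex-per-colour constraint and the presence of the anchor $q$: it could force dropping $q$ or keeping two representatives of the same colour. This is precisely the obstruction that the paper's zero-avoiding complexes and Meshulam's homological nerve theorem (both flagged in the abstract) are designed to address, by replacing the ad-hoc swap analysis with a uniform topological criterion on the colourful join $F_1*\cdots*F_d*\{p\}$, encoding the ``one vertex per colour plus the fixed anchor $p$'' structure directly as a covering property of the underlying simplicial complex.
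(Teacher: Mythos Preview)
Your reduction to $r=d+1$ with $A(p)\neq0$ is sound, and you have put your finger on the real obstruction in the minimisation approach: when the nearest-point face is $f=\conv\{A(p_1),\ldots,A(p_d)\}$, the anchor $q$ cannot be swapped, and swapping any $A(p_i)$ destroys the face carrying $y$. But your proposed remedy---enlarge to $\Sigma$, take its nearest point, then extract an anchored colourful sub-simplex---fails for exactly the reason you yourself name: Carath\'eodory applied inside $\Sigma$ may drop $q$ or keep two representatives of one colour, and you supply no argument ruling this out. What you have written is therefore not a proof but an outline that halts at the hard step and then points to the paper's machinery. Your closing suggestion also overstates matters: Theorem~\ref{th : colourful_Caratheodory_02} is B\'ar\'any's original Theorem~2.3 and admits an elementary proof; the zero-avoiding complexes and Meshulam's nerve theorem are not \emph{required} here. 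The paper uses them for uniformity, so that Theorems~\ref{th : colourful_Caratheodory_01}--\ref{th : colourful_Caratheodory_03} and the new Theorem~\ref{th : main_result_2} all fall to the same template.

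That template bears no resemblance to a swap argument. The paper takes $\mathcal{K}=\vertex(F_1)\ast\cdots\ast\vertex(F_{r-1})\ast\{p\}$, which is a cone and hence contractible, so $Z=\CC_A^{*}\ast\mathcal{K}$ has vanishing reduced homology over any field $\FF$. Assuming no face $J\in\mathcal{K}$ satisfies $0\in A(J)$, Lemma~\ref{lem : affine CS/TM scheme 02 caratheodory} makes the family $\U=\{Z[(V^{(1)}\sqcup V^{(2)})- C_j]:0\le j\le N\}$ a cover of $Z$ with nerve $\partial\Delta_N\cong S^{N-1}$. The connectivity estimates of Section~\ref{sec:ZeroAvoidingComplexesAlexanderDualsInducedSubcomplexes} show that each $t$-fold intersection $X^*[U]\ast Y[U]$ is acyclic through dimension $|U|+r-d-3$, and Meshulam's homological Nerve theorem then forces $\widetilde{H}_{N-1}(Z;\FF)\cong\widetilde{H}_{N-1}(\partial\Delta_N;\FF)\cong\FF$, contradicting the vanishing just established. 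The anchor $p$ enters only through the contractibility of $\mathcal{K}$; there is no case analysis on nearest-point faces, and the problematic configuration you isolated simply never arises.
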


\medskip
For the next extension we needed to wait almost three decades.
Independently two groups of authors, Arocha, B\'ar\'any, Bracho, Fabila \& Montejano~\cite[Thm.\,1]{ArochaBaranyBrachoFabilaMontejano2009} and Holmsen, Pach \& Tverberg~\cite[Thm.\,8]{HolmsenPachTverberg2008}, proved the following result.

\medskip
Here we will abuse notation and for faces $F_i$ and $F_j$ of a simplex $\Delta_N$ we denote by $F_i\cup F_j$ the minimal face of $\Delta_N$ containing both $F_i$ and $F_j$.
This means, $\vertex(F_i\cup F_j)=\vertex(F_i)\cup\vertex(F_j)$.

\medskip
\begin{theorem}[The colourful Carath\'eodory III]
	\label{th : colourful_Caratheodory_03}
	Let $N\geq 1$ be an integer, and let $A\colon \Delta_N\longrightarrow\R^d$ be an affine map.

	\smallskip\noindent
	If there exists an integer $r\geq d+1$ and $r$ pairwise disjoint non-empty faces $F_1,\dots, F_{r}$ of the simplex $\Delta_N$ such that
	$
		0\in \bigcap_{1\leq i<j\leq r} A(F_i\cup F_j)
	$,
	then there exists a selection of vertices $p_1\in F_1,\dots, p_r\in F_r$ with the property that $0\in A(J)$ where $J=\conv\{p_1,\dots,p_r\}\subseteq\Delta_N$ is the face of the simplex $\Delta_N$ spanned by the selected vertices  $p_1,\dots,p_r$.
\end{theorem}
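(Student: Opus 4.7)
The plan is to run an extremal argument in the spirit of B\'ar\'any's original proof of Theorem~\ref{th : colourful_Caratheodory_01}, adapted to the weaker pairwise hypothesis. Among all colourful simplices $J=\conv\{p_1,\dots,p_r\}$ with $p_i\in\vertex(F_i)$, fix one that minimises $\dist(0,A(J))$, and assume for contradiction that this minimum is positive. Let $q$ be the nearest point of $A(J)$ to $0$ and let $H$ be the affine hyperplane through $q$ orthogonal to $\overline{0q}$; then $A(J)$ lies in the closed half-space $H^{+}$ not containing $0$. The point $q$ lies in the relative interior of $A(J_0)$ for a unique face $J_0\subseteq J$; since $q$ is the foot of the perpendicular from $0$ to $\spann A(J_0)$ and $q\neq 0$, the affine span of $A(J_0)$ misses $0$, so $\dim A(J_0)\leq d-1$ and $J_0$ uses at most $d$ of the $r\geq d+1$ colors. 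In particular, at least one color, say $j$, does not appear in $J_0$.

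For any color $i$ used in $J_0$ the hypothesis gives $0\in A(F_i\cup F_j)$, hence a vertex $v\in \vertex(F_i)\cup\vertex(F_j)$ with $A(v)$ in the open half-space $H^{-}$ that contains $0$. In the easy case $v\in\vertex(F_j)$, I replace $p_j$ by $v$; since $J_0$ survives in the new colourful simplex $J'$ and $A(v)\in H^{-}$, the segment from $q$ to $A(v)$ lies in $A(J')$ and produces a point strictly closer to $0$ than $q$, contradicting the choice of $J$.

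The main obstacle is the remaining case, in which every vertex of $F_j$ is mapped into $H^{+}$ and so $v$ must lie in $\vertex(F_i)$. Swapping $p_i$ for $v$ now destroys the face $J_0$ (since $p_i\in J_0$), and the na\"ive distance-reduction argument breaks: although $A(J')$ meets both open sides of $H$, its nearest point to $0$ need not lie strictly inside $H^{-}$. To overcome this I envisage one of two routes. The first is a secondary monovariant, replacing $\dist(0,A(J))$ by the lexicographic pair $(\dist(0,A(J)),\dim J_0)$; a swap that fails to strictly reduce the distance must strictly decrease the dimension of the witnessing face and therefore cannot recur indefinitely. The second is an induction on $d$: project $\R^d$ orthogonally along $\overline{0q}$ onto $H$, verify that the projected pairwise hypothesis survives for the colors appearing in $J_0$ together with $j$, and invoke the $(d-1)$-dimensional instance of the theorem. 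Either route is delicate, and it is precisely this bookkeeping that the paper's zero-avoiding complexes combined with Meshulam's Nerve theorem are designed to replace by a clean topological covering criterion.
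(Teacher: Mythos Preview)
Your extremal setup is sound, and you correctly isolate the genuine obstruction: when $r=d+1$ and the minimal carrier $J_0$ of $q$ uses all $d$ remaining colours, the single missing colour $j$ may satisfy $A(F_j)\subseteq H^{+}$, forcing the replacement vertex $v$ into a colour $i\in J_0$ and destroying the face that witnessed $q$. But neither escape route is actually carried out, and as stated neither closes the gap. For Route~1 you give no argument that $\dim J_0$ must drop after the swap $p_i\mapsto v$; the new nearest point $q'$ (with $|q'|\ge|q|$ by minimality of $J$) lies on a face $J_0'$ of the new simplex that bears no evident relation to $J_0$, and nothing you have said prevents $|q'|=|q|$ together with $\dim J_0'=\dim J_0$, so the lexicographic descent can stall. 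For Route~2, induction on the projection to $H$ does produce $p_i'\in F_i$ and a lift $x=tq\in A(\conv\{p_i':i\in I_0\})$, but you supply no mechanism forcing $t\in[0,1)$; when $t\ge 1$ the lift sits in $H^{+}$ and the new colourful simplex need not approach $0$, and when $t<0$ the segment from $x$ to $A(p_j)$ crosses $\{y:\langle q,y\rangle=0\}$ at a point of uncontrolled norm. Elementary proofs in this spirit do exist (Arocha--B\'ar\'any--Bracho--Fabila--Montejano; Holmsen--Pach--Tverberg), but each relies on an additional idea you have not reproduced, so the proposal as written is not a proof.

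The paper's argument is of a completely different nature and bypasses this case analysis. It forms $Z=\CC_A^{*}*\mathcal{K}$ with $\mathcal{K}=\vertex(F_1)*\cdots*\vertex(F_r)$, and shows via Meshulam's homological Nerve theorem that if the family $\U$ of induced sub-complexes $Z[(V^{(1)}\sqcup V^{(2)})\setminus C_j]$ actually covered $Z$ --- which, by Lemma~\ref{lem : affine CS/TM scheme 02 caratheodory}, it would if no rainbow simplex captured $0$ --- then $\widetilde{H}_{N-1}(Z;\Q)\cong\widetilde{H}_{N-1}(\partial\Delta_N;\Q)\cong\Q$, contradicting the computation $\widetilde{H}_{N-1}(Z;\Q)=0$ coming from Lemma~\ref{lem : topology of dual of complexity complex} and the K\"unneth formula. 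The pairwise hypothesis is invoked exactly once, and only qualitatively: whenever $0\notin\conv(A(V\setminus U))$, it forces at least $r-1$ of the colour classes to meet $U$, which is precisely the connectivity input the Nerve theorem needs for $Y[U]$.
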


\medskip
Yet another intriguing extension is due to Kalai \& Meshulam~\cite[Cor.\,1.4]{KalaiMeshulam2005} which came as a consequence of their topological colourful Helly theorem~\cite[Thm.\,1.6]{KalaiMeshulam2005}.
For the proper statement of the result we need to introduce the notion of a matroid.

\medskip
An abstract simplicial complex $\mathcal{M}$, on the set of vertices $M:=\vertex({\mathcal{M}})$, is called a {\em matroid} if for every two faces $F$ and $F'$ of $\mathcal{M}$ such that $\dim(F)<\dim(F')$, there exists a vertex $v\in F'-F$ such that $F\cup\{v\}$ is a face of $\mathcal{M}$.
To every matroid $\mathcal{M}$ we associate the {\em rank function} $\rho_{\mathcal{M}}\colon 2^{M}-\{\emptyset\}\longrightarrow\N$ by setting
\[
	\rho_{\mathcal{M}}(S):= \dim (\mathcal{M}[S])+1
\]
for every $\emptyset\neq S\subseteq M$.
Recall, here $\mathcal{M}[S]$ denotes the induced sub-complex of  $\mathcal{M}$ on $S$.

\medskip
As an illustration
of a matroid we give a relevant example: Let $k\geq 1$ be an integer and let $M_1,\dots,M_k$ be a collection of $0$-dimensional simplicial complexes.
The join simplicial complex $M_1*\dots*M_k$ is the so-called partition matroid, see Figure~\ref{fig c02}.

\medskip
Now, the result of Kalai \& Meshulam~\cite[Cor.\,1.4]{KalaiMeshulam2005} can be formulated as follows.

\medskip
\begin{theorem}[The colourful Carath\'eodory IV]
	\label{th : colourful_Caratheodory_04}
	Let $N\geq 1$ be an integer, and let $A\colon \Delta_N\longrightarrow\R^d$ be an affine map.
	Suppose that the matroid $\mathcal{M}$ is a sub-complex of the simplex $\Delta_N$ with $M=\vertex(\mathcal{M})=\vertex(\Delta_N)$.

	\smallskip\noindent
	If for every face $S$ of the simplex $\Delta_N$ with the property that $\rho_{\mathcal{M}}(S)=\rho_{\mathcal{M}}(M)$ and $\rho_{\mathcal{M}}(M-S)\leq d$ it holds that $0\in A(S)$, then there exists a face $J$ of the matroid $\mathcal{M}$ such that $0\in A(J)$.
\end{theorem}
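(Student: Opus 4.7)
The plan is to argue by contradiction using the paper's framework of zero-avoiding complexes together with Meshulam's homological Nerve theorem. Suppose that $0\notin A(J)$ for every face $J$ of the matroid $\mathcal{M}$, and introduce the \emph{zero-avoiding complex}
\[
\mathcal{Z}:=\bigl\{F\subseteq\vertex(\Delta_N) : 0\notin A(F)\bigr\}.
\]
Since $A$ is affine, $A(F')\subseteq A(F)$ whenever $F'\subseteq F$, so $\mathcal{Z}$ is a sub-complex of $\Delta_N$. The contradictory hypothesis reads $\mathcal{M}\subseteq\mathcal{Z}$, while the hypothesis of the theorem translates to: no face $S\subseteq M$ satisfying $\rho_{\mathcal{M}}(S)=\rho_{\mathcal{M}}(M)$ and $\rho_{\mathcal{M}}(M-S)\leq d$ can belong to $\mathcal{Z}$.

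First I would cover $\mathcal{Z}$ by the ``open half-space'' sub-complexes
\[
\mathcal{Z}_\ell \;:=\; \{F : \ell(A(v))>0 \text{ for all } v\in F\}, \qquad \ell\in(\R^d)^*\setminus\{0\}.
\]
By the separation theorem, $\mathcal{Z}=\bigcup_\ell\mathcal{Z}_\ell$; only finitely many distinct $\mathcal{Z}_\ell$ occur (one per chamber of the hyperplane arrangement $\{\ell(A(v))=0 : v\in M\}$ in $(\R^d)^*$), each $\mathcal{Z}_\ell$ is a full simplex, and so is each of their non-empty intersections. Meshulam's homological Nerve theorem then identifies $\widetilde H_*(\mathcal{Z})$ with the homology of the associated nerve, whose dimension is controlled by $d$. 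This yields an upper bound on the degrees in which $\widetilde H_*(\mathcal{Z})$ can be non-zero.

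To complete the argument I would use the matroid structure to exhibit a non-trivial class of $\widetilde H_*(\mathcal{Z})$ in the forbidden degree. The inclusion $\mathcal{M}\subseteq\mathcal{Z}$ furnishes the ``independent'' faces, while the theorem's hypothesis forbids from $\mathcal{Z}$ exactly the spanning faces $S$ with $\rho_{\mathcal{M}}(M-S)\leq d$. Viewed through matroid duality, this latter condition says $M-S$ is covered by $d$ cocircuits; combined with the shellability of the independence complex of $\mathcal{M}$, this would let one detect a non-zero homology class of the correct degree inside $\mathcal{Z}$, contradicting the Nerve bound.

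The hard part will be this final degree matching: producing the non-trivial homology class in $\mathcal{Z}$ and verifying that its degree is precisely the one excluded by Meshulam's theorem. The rank identity $\rho_{\mathcal{M}}(M-S)\leq d$ combined with the dimension $d$ of the target of $A$ is what makes the two numbers agree, and tracing this identification through the Alexander-dual relationship between the forbidden spanning faces and the independence complex of $\mathcal{M}$ is where the technical weight of the proof should lie.
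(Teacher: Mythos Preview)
First, note that the paper does \emph{not} include a proof of Theorem~\ref{th : colourful_Caratheodory_04}; it explicitly defers to Kalai--Meshulam~\cite[Cor.\,1.4]{KalaiMeshulam2005} and only remarks that an ``analogous proof'' via the covering scheme (Lemma~\ref{lem : affine CS/TM scheme 02 caratheodory}, with $Z=\CC_A^**\mathcal{M}$) should exist. Your proposal follows neither route: you work directly with $\mathcal{Z}=\CC_A$, not with the join $\CC_A^**\mathcal{M}$.

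There is a concrete error in your first step. You cover $\mathcal{Z}$ by the half-space simplices $\mathcal{Z}_\ell$ indexed by chambers of the arrangement $\{\ell(A(v))=0\}_{v\in M}$ in $(\R^d)^*$, and assert that the nerve of this cover has ``dimension controlled by $d$''. This is false. A face of that nerve is any collection of chambers lying in a common half-space $\{\ell:\ell(A(v))>0\}$; already for $d=2$ and $N+1$ lines through the origin such a half-plane contains roughly $N+1$ of the $2(N+1)$ sectors, so the nerve has dimension of order $N$, not $d$. The Nerve theorem here only tells you the nerve is homotopy equivalent to $\mathcal{Z}$, which is circular for your purpose. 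The correct bound $\widetilde{H}_i(\CC_A)=0$ for $i\geq d$ comes instead from Lemma~\ref{lem: topology of convexity complex}: $\CC_A$ is itself the nerve of open hemispheres in $S^{d-1}$ (Lemma~\ref{lem : nerve}), hence homotopy equivalent to a subset of a $(d-1)$-sphere. More to the point, what one actually needs is the $d$-Leray property of $\CC_A$ (vanishing in degrees $\geq d$ for all \emph{induced} subcomplexes), which follows from applying Lemma~\ref{lem: topology of convexity complex} to each $\CC_A[U]=\CC_{A|_{\Delta_U}}$.

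The second, more serious, gap is that your ``hard part'' is the entire theorem. Once one knows $\CC_A$ is $d$-Leray and $\mathcal{M}\subseteq\CC_A$, producing a simplex $S\in\CC_A$ with $\rho_{\mathcal{M}}(S)=\rho_{\mathcal{M}}(M)$ and $\rho_{\mathcal{M}}(M-S)\leq d$ is precisely the content of the Kalai--Meshulam result on $d$-Leray complexes and matroids. Your gestures toward ``shellability of the independence complex'' and an ``Alexander-dual relationship'' do not constitute an argument: the inclusion $\mathcal{M}\hookrightarrow\CC_A$ can easily kill the top homology of $\mathcal{M}$, so one cannot simply push a class forward. The actual Kalai--Meshulam proof runs the Nerve theorem over a cover of $\CC_A$ indexed by the matroid structure (not by half-spaces), and it is there that the rank condition $\rho_{\mathcal{M}}(M-S)\leq d$ emerges. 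As written, your outline identifies the right objects but supplies neither a correct mechanism for the homology bound nor any argument for the crucial final step.
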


\begin{figure}[h]
	\includegraphics[scale=1.6]{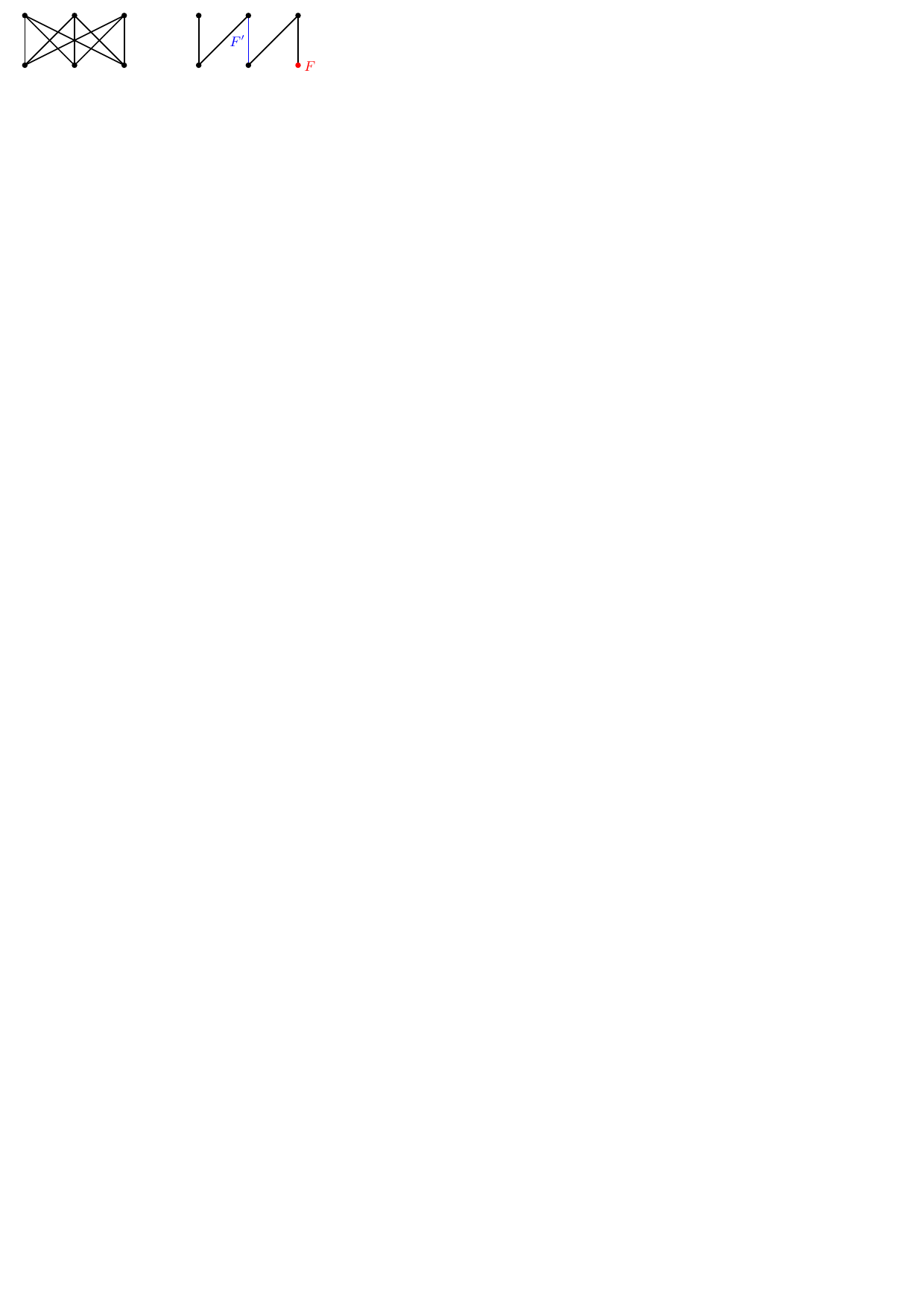}
	\caption{\small An illustration of the partition matroid $[3]*[3]$ and its sub-complex which is not a matroid since, for example, faces $F'$ and $F$ does not satisfy the defining property of a matroid.}
	\label{fig c02}
\end{figure}

\medskip
Summarizing the previous results we see that we always begin with an affine map $A\colon \Delta_N\longrightarrow\R^d$ from a simplex into some Euclidean space, and then we fix two sets of data
\begin{compactitem}[\quad --]
	\item a simplical sub-complex $\mathcal{K}$ of $\Delta_N$, with the property that $\vertex(\mathcal{K})=\vertex(\Delta_N)$, and
	\item a family of faces $\mathcal{S}$ of the simplex $\Delta_N$ with the property that for every $S\in \mathcal{S}$ we have $0\in A(S)$.
\end{compactitem}
Note that the second set of data also restricts the class of affine maps which can be considered.
Then our goal is to find a face $J$ of $\mathcal{K}$ with the property that $0\in A(J)$.

\medskip
In this language the previous results correspond to the following setups
\begin{compactitem}[\quad --]
	\item[\quad -- Theorem~\ref{th : colourful_Caratheodory_01}:]
	$\vertex(\Delta_N)=\vertex(F_1)\sqcup\cdots\sqcup \vertex(F_{r})$, $\mathcal{S}=\{F_1,\dots,F_r\}$, and $\mathcal{K}=\vertex(F_1)*\cdots * \vertex(F_{r})$,
	\item[\quad-- Theorem~\ref{th : colourful_Caratheodory_02}:]
	$\vertex(\Delta_N)=\vertex(F_1)\sqcup\cdots\sqcup \vertex(F_{r-1})\sqcup\{p\}$, $\mathcal{S}=\{F_1,\dots,F_{r-1}\}$, and $\mathcal{K}=\vertex(F_1)*\cdots * \vertex(F_{r-1})*\{p\}$,
	\item[\quad--  Theorem~\ref{th : colourful_Caratheodory_03}:]
	$\vertex(\Delta_N)=\vertex(F_1)\sqcup\cdots\sqcup \vertex(F_{r})$, $\mathcal{S}=\{F_i\cup F_j : 1\leq i<j\leq r\}$, and $\mathcal{K}=\vertex(F_1)*\cdots * \vertex(F_{r})$,
	\item[\quad-- Theorem~\ref{th : colourful_Caratheodory_04}:] $\mathcal{K}=\mathcal{M}$ is a matroid with $\vertex(\mathcal{M})=\vertex(\Delta_N)$ and $\mathcal{S}=\{ S\subseteq\Delta_N : \rho_{\mathcal{M}}(S)=\rho_{\mathcal{M}}(M),\,\rho_{\mathcal{M}}(M-S)\leq d\}$.
\end{compactitem}
Thus, the general colourful Carath\'eodory problem can be formulated as follows.

\medskip
\begin{problem}[The general colourful Carath\'eodory problem]\label{prob : general_colourful_Caratheodory theorem}
Let $N\geq 1$ and $d\geq 1$ be integers.
Find all simplicial sub-complexes $\mathcal{K}$ of the simplex $\Delta_N$, with $\vertex(\mathcal{K})=\vertex(\Delta_N)$, and all families $\mathcal{S}$ of faces of $\Delta_N$ such that the following implication holds: If an affine map $A\colon\Delta_N\longrightarrow\R^d$ satisfies that $0\in A(S)$ for every face $S\in \mathcal{S}$, then there exists a face $J$ of $\mathcal{K}$ with $0\in A(J)$.
\end{problem}

In this paper, building on the ideas of Kalai \& Meshulam~\cite{KalaiMeshulam2005}, we develop an algorithm for solving Problem~\ref{prob : general_colourful_Caratheodory theorem}.
Then, using this method and the homological Nerve theorem of Meshulam~\cite{Meshulam2001} we reprove all known results and also prove the following new generalisation of the colourful Carath\'eodory theorem.
As a consequence we obtain the corresponding extension of the classical Tverberg theorem.

\medskip
\subsection{A constrained colourful Carath\'eodory theorem}
The central result of this paper is the following constrained colourful Carath\'eodory theorem.

\medskip
\begin{theorem}\label{th : main_result_2}
	Let $N\geq 1$, $d\geq 2$ and $r\geq d+1$ be integers, and let $A\colon \Delta_N\longrightarrow\R^d$ be an affine map.
	Suppose that
	\begin{compactenum}[\rm (1)]
		\item $F_1,\dots, F_{r}$ is a collection of pairwise disjoint non-empty faces of the simplex $\Delta_N$ such that
		$
			0\in A(F_1)\cap\cdots\cap A(F_r)
		$, and
		\item $\mathcal{K}$ is a simplicial sub-complex  $L*\vertex(F_3)*\cdots *\vertex(F_r)$ of $\Delta_N$ where $L$ is a path-connected sub-complex of the join complex $\vertex(F_1)*\vertex(F_2)$ with $\vertex(L)=\vertex(F_1)\cup\vertex(F_2)$.
	\end{compactenum}
	Then there exists a face $J$ of $\mathcal{K}$ with $0\in A(J)$.
\end{theorem}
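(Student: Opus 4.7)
The approach I would take is a proof by contradiction in the zero-avoiding complex framework set up earlier in the paper. Let
\[
	\ZZ(A) := \{\sigma \in \Delta_N : 0 \notin A(\sigma)\}
\]
be the zero-avoiding subcomplex of $\Delta_N$ associated to the affine map $A$, and assume for contradiction that $\mathcal{K} \subseteq \ZZ(A)$. The plan is then to cover $\ZZ(A)$ by contractible subcomplexes, apply Meshulam's homological Nerve theorem, and derive a contradiction from the colourful transversal structure forced by the hypothesis $0 \in A(F_i)$ for every $i$.

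For the cover: for every $u \in S^{d-1}$ define
\[
	\ZZ_u := \{\sigma \in \Delta_N : \langle A(v), u \rangle > 0 \text{ for every } v \in \vertex(\sigma)\}.
\]
Each $\ZZ_u$ is a full simplex on its vertex set (hence contractible), every non-empty intersection $\ZZ_{u_{k_1}} \cap \cdots \cap \ZZ_{u_{k_s}}$ is again a full simplex, and by Hahn--Banach $\ZZ(A) = \bigcup_u \ZZ_u$. Extracting a finite subcover $\{\ZZ_{u_1}, \dots, \ZZ_{u_m}\}$, the hypotheses of Meshulam's theorem are met and the reduced homology of $\ZZ(A)$ agrees with that of the nerve $\NN = \NN(\ZZ_{u_1}, \dots, \ZZ_{u_m})$. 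The hypothesis $0 \in A(F_i)$ forces every $\ZZ_{u_k}$ to omit at least one vertex from each $F_i$, equipping each vertex of $\NN$ with a \emph{colourful transversal}, which is precisely the input of the covering criterion developed in the paper.

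On the complex side, iterating the join connectivity formula yields that $\mathcal{K} = L * \vertex(F_3) * \cdots * \vertex(F_r)$ is $(r-2)$-connected: $L$ is $0$-connected by hypothesis, and joining with each of the $r-2$ non-empty (i.e., $(-1)$-connected) $0$-complexes $\vertex(F_i)$ raises connectivity by one. Since $r \geq d+1$, $\mathcal{K}$ is in particular $(d-1)$-connected, and its vertex set meets every colour class $\vertex(F_1), \dots, \vertex(F_r)$. These two features are precisely what the covering criterion rules out for a subcomplex of $\ZZ(A) \simeq |\NN|$, producing the desired contradiction. The main obstacle is the calibration of this criterion: path-connectedness of $L$ must be exactly the right topological hypothesis to supply the initial $0$-connectivity level in the join, matching the ambient dimension $d$, while any weakening of this hypothesis would drop the connectivity of $\mathcal{K}$ below $d-1$ and break the scheme. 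Theorem~\ref{th : colourful_Caratheodory_01} recovers as the special case $L = \vertex(F_1) * \vertex(F_2)$, and passing to an arbitrary path-connected $L \subsetneq \vertex(F_1) * \vertex(F_2)$ is where the present result genuinely extends the matroid framework of Kalai and Meshulam.
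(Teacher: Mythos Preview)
Your proposal has a genuine gap: the contradiction is never actually derived. You assume $\mathcal{K}\subseteq\CC_A$, cover $\CC_A$ by the hemispherical simplices $\ZZ_u$, and correctly observe that $\mathcal{K}$ is $(r-2)$-connected, hence $(d-1)$-connected. But none of this is inconsistent. The nerve of your hemisphere cover only recovers the homotopy type $\CC_A\simeq S^{d-1}$ (this is essentially Lemma~\ref{lem: topology of convexity complex}), and there is no obstruction whatsoever to a $(d-1)$-connected complex sitting as a subcomplex of something homotopy equivalent to $S^{d-1}$: any single simplex of $\CC_A$ is contractible and hence $(d-1)$-connected. The ``covering criterion developed in the paper'' that you invoke is Lemma~\ref{lem : affine CS/TM scheme 02 caratheodory}, and it does not say anything about highly-connected subcomplexes of $\CC_A$; it concerns a cover of a different complex entirely. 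The transversal observation you make (that each $\ZZ_{u_k}$ misses a vertex of every $F_i$) is correct but does not plug into any argument you have set up.

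What the paper actually does is pass to the Alexander dual and form $Z=\CC_A^* * \mathcal{K}$. The cover is by the induced subcomplexes $Z[(V^{(1)}\sqcup V^{(2)})-C_j]$, one for each vertex $p_j$ of $\Delta_N$, and its nerve is $\partial\Delta_N\cong S^{N-1}$. The contradiction comes from comparing $\widetilde H_{N-1}(Z;\Q)$, which vanishes by the K\"unneth formula~\eqref{eq:kunneth_for_join} together with Lemma~\ref{lem : topology of dual of complexity complex}, against $\widetilde H_{N-1}(\partial\Delta_N;\Q)\cong\Q$ via Meshulam's homological Nerve theorem. The technical heart is bounding the connectivity of the intersections $\CC_A^*[U]*\mathcal{K}[U]$: this uses Corollary~\ref{cor : topology of induced sub-complexes 03.5 } for the first factor and, crucially, the fact that $0\notin\conv(A(V-U))$ forces $U$ to meet every $\vertex(F_i)$, so $\mathcal{K}[U]$ retains the join structure $L[U]*(U\cap\vertex(F_3))*\cdots*(U\cap\vertex(F_r))$ and is $(r-3)$-connected. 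Your connectivity computation for $\mathcal{K}$ itself is correct and is one ingredient, but the machinery it feeds into is the $Z=\CC_A^**\mathcal{K}$ cover, not a cover of $\CC_A$.
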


\medskip
The example in Figure~\ref{fig c02} indicates that Theorem~\ref{th : main_result_2} is not a consequence of the matroid version of the colourful Carath\'eodory theorem~\ref{th : colourful_Caratheodory_03}.
Further, an illustration of Theorem~\ref{th : main_result_2} in Figure~\ref{fig c03} indicates that additional constraints on the simplicial complex $\K$ are not possible, suggesting that our theorem cannot be generalized further.

\begin{figure}[h]
	\includegraphics[scale=1.2]{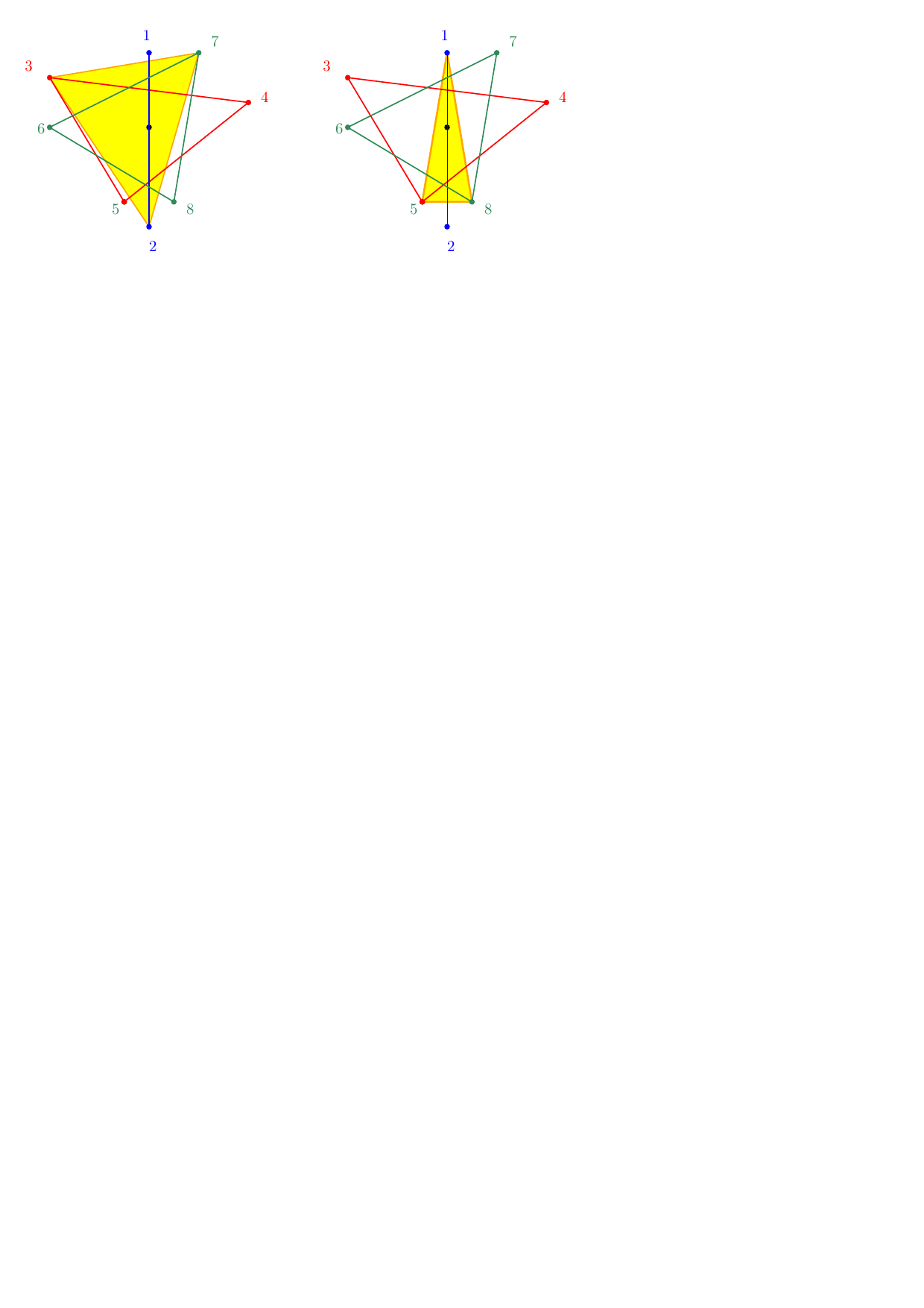}
	\caption{\small An instance of Theorem~\ref{th : main_result_2} for $\Delta_7\longrightarrow\R^2$ with $F_1=\conv(\{1,2\}),F_2=\conv(\{3,4,5\}),F_3=\conv(\{6,7,8\})$ and $L\subseteq\{3,4,5\}*\{6,7,8\}$ determined by the edges $\conv(\{3,6\}), \conv(\{3,7\}), \conv(\{4,7\}), \conv(\{4,8\}), \conv(\{5,8\})$.}
	\label{fig c03}
\end{figure}

\medskip
The previous result, as a direct consequence, yields a strengthening of 1966 Tverberg's theorem via Sarkaria's tensor trick~\cite{Sarkaria1992}; see also~\cite[Ch.\,13]{Barany2021}.
Until now the only known strengthening of the classical Tverberg result is the Optimal colored Tverberg theorem of Blagojevi\'c, Matschke \& Ziegler~\cite{Blagojevic2009,Blagojevic2011-2} which holds only when the overlapping parameter is a prime.
The following new result has no restriction on the overlapping parameter.

\medskip
\begin{corollary}
	\label{th : main_result_1}
	Let $r\geq 2$, $d\geq 1$ and $N\geq (d+1)(r-1)$	be integers, and let $a\colon \Delta_N\longrightarrow\R^d$ be an affine map.
	Consider an arbitrary sub-complex $L$ of the $1$-dimensional simplicial complex $[r]*[r]$ that is path-connected and $\vertex(L)=\vertex([r]*[r])$.
	Let $\LL$ be the sub-complex $L*\big([r]^{* N-1}\big)$ of the deleted join $(\Delta_N)^{*r}_{\Delta(2)}\cong [r]^{* N+1}$.

	\smallskip\noindent
	Then there are $r$ pairwise disjoint faces $F_1,\dots, F_r$ of the simplex $\Delta_N$ with the property that the face $F_1*\dots*F_r\in\LL$ and that $a(F_1)\cap\cdots\cap a(F_r)\neq\emptyset$.
\end{corollary}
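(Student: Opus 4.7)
The plan is to derive the Corollary from Theorem~\ref{th : main_result_2} via Sarkaria's tensor trick, exactly as Tverberg's theorem is usually derived from the classical colourful Carath\'eodory theorem. I choose vectors $v_1,\dots,v_r\in\R^{r-1}$ with $v_1+\cdots+v_r=0$ and such that any $r-1$ of them are linearly independent. Let $p_0,\dots,p_N$ denote the vertices of $\Delta_N$ and let $\Delta_M$ be the simplex on the vertex set $\{0,\dots,N\}\times[r]$, so $M+1=(N+1)r$. I then define an affine map
\[
A\colon \Delta_M\longrightarrow\R^{(d+1)(r-1)},\qquad A((i,j)):=(a(p_i),1)\otimes v_j,
\]
and set $C_i:=\{(i,1),\dots,(i,r)\}$ for $i=0,\dots,N$. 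These are pairwise disjoint faces of $\Delta_M$, and the centroid of $A(C_i)$ equals $\tfrac{1}{r}(a(p_i),1)\otimes(v_1+\cdots+v_r)=0$, so $0\in A(C_i)$ for every $i$.

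Next I identify $L$ with a subcomplex $L_*\subseteq\vertex(C_0)*\vertex(C_1)$ via the bijection $j\mapsto(k,j)$ for $k=0,1$; this $L_*$ is path-connected and satisfies $\vertex(L_*)=\vertex(C_0)\cup\vertex(C_1)$. Put $\K_*:=L_**\vertex(C_2)*\cdots*\vertex(C_N)$ and apply Theorem~\ref{th : main_result_2} with the role of its $(r,d)$ played by $(N+1,\,(d+1)(r-1))$, its faces by $C_0,\dots,C_N$, its affine map by $A$, and its $\K$ by $\K_*$. The hypothesis $(d+1)(r-1)\geq 2$ is immediate from $d\geq 1$, $r\geq 2$, and $N+1\geq(d+1)(r-1)+1$ is the standing assumption $N\geq(d+1)(r-1)$. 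Theorem~\ref{th : main_result_2} thus delivers a face $J$ of $\K_*$ with $0\in A(J)$.

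From $J$ I read off $F_j:=\{i\in\{0,\dots,N\} : (i,j)\in J\}$, so $F_1,\dots,F_r$ are automatically pairwise disjoint faces of $\Delta_N$. The fact that $J\cap(\vertex(C_0)\cup\vertex(C_1))$ is a face of $L_*$ translates under the identification to the statement that the part of $F_1*\cdots*F_r$ indexed by the first two coordinate slots $0$ and $1$ sits in $L$, which is precisely what membership $F_1*\cdots*F_r\in\LL=L*([r]^{*N-1})$ demands. To convert $0\in A(J)$ into a common Tverberg point, I write $0=\sum_{(i,j)\in J}\mu_{i,j}(a(p_i),1)\otimes v_j$ with $\mu_{i,j}\geq 0$, $\sum\mu_{i,j}=1$, and split the two tensor slots. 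The lower slot gives $\sum_j\mu_j v_j=0$ with $\mu_j:=\sum_{i\in F_j}\mu_{i,j}$; the Sarkaria property of the $v_j$'s forces $\mu_1=\cdots=\mu_r=1/r$, so every $F_j$ is in particular non-empty. The upper slot then reduces to $\sum_j a(x_j)\otimes v_j=0$, where $x_j\in\conv(F_j)$ is the weighted barycentre, and the same linear-independence property forces $a(x_1)=\cdots=a(x_r)$; this common value lies in $a(F_1)\cap\cdots\cap a(F_r)$.

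I do not expect a genuine obstacle: once Theorem~\ref{th : main_result_2} is in hand the whole deduction is a textbook application of Sarkaria's tensor trick. The only mildly delicate point is ensuring that the combinatorial constraint imposed on the pair $(F_1,F_2)$ by membership in $L$ matches exactly the constraint imposed on $J\cap(\vertex(C_0)\cup\vertex(C_1))$ by the first two factors of $\K_*$, and that matching is built into the identification $[r]\leftrightarrow\vertex(C_k)$ used above.
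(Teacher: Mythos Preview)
Your proof is correct and follows exactly the approach the paper indicates: the paper does not spell out a proof of the corollary but states that it follows from Theorem~\ref{th : main_result_2} ``via Sarkaria's tensor trick'', and your argument is precisely the standard execution of that trick with the extra bookkeeping that the constraint on the first two join factors of $\K_*$ matches the constraint imposed by $L$ in $\LL$. The verification that $(d+1)(r-1)\geq 2$ and $N+1\geq (d+1)(r-1)+1$, the non-vanishing of $A((i,j))$, and the reading of the tensor equation in the two slots are all carried out correctly.
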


\medskip
While Theorem~\ref{th : main_result_2} is the central result of this paper, an essential contribution of this work lies in development of new methods and tools which compose into an algorithm for advancements on the general colourful Carath\'eodory problems and consequently on general (affine) Tverberg problems as well as on the B\'ar\'any--Kalai conjecture~\cite[Conj.\,4.5]{BaranyKalai2022}.

\medskip
In the following we will prove all versions of the colourful Carath\'eodory theorem except the matroid version, Theorem~\ref{th : colourful_Caratheodory_04}, because the relevant proof is content of the seminal work of Kalai \& Meshulam~\cite[Cor.\,1.4]{KalaiMeshulam2005}.

\medskip
We start with an introduction and a study of zero-avoiding complexes, the associated Alexander duals and the corresponding induced sub-complexes, Section~\ref{sec:ZeroAvoidingComplexesAlexanderDualsInducedSubcomplexes}.
Next, we relate the existence of a solution to Problem~\ref{prob : general_colourful_Caratheodory theorem} with the claim that a particular collection of sub-complexes covers a particularly constructed simplicial complex and then prove Theorem~\ref{th : colourful_Caratheodory_01}, Theorem~\ref{th : colourful_Caratheodory_02}, Theorem~\ref{th : colourful_Caratheodory_03} and Theorem~\ref{th : main_result_2} in Section~\ref{sec : Covering scheme and proofs of colourful Caratheodory theorems}.

\medskip
Finally, using the notion of vector-avoiding complexes in the place of zero-avoiding complexes we prove in \cite{BlagojevicKarasev2025} the cone analogues of all known versions of the colourful Carath\'eodory theorem.

\subsection*{Acknowledgements}

I would like to thank my students Nevena Pali\'c and Nikola Sadovek for many useful discussions and corrections. Furthermore, my gratitude goes to Micheal Crabb, for offering his expertise and ideas as well as many essential insights.
Matija Blagojevi\'c helped with the final preparation of the manuscript.
I am grateful to Roman Karasev for prompt suggestions and corrections of the first arXiv version of this manuscript.

\bigskip
\section{Zero-avoiding complexes, its Alexander duals and induced sub-complexes}
\label{sec:ZeroAvoidingComplexesAlexanderDualsInducedSubcomplexes}
\bigskip

We begin by introducing the notion of a zero-avoiding simplicial complex associated to an affine map from a simplex into an affine Euclidean space.
These complexes were recently studied, with a different focus, by Bludov in his beautiful work \cite{Bludov2025} where the zero-avoiding simplicial complex is called simplicial complex of non-balanced subsets.

\medskip
\subsection{Definition of zero-avoiding complexes}
\label{subsec : Definition of zero-avoiding complexes}

Let $\E$ be a finite dimensional affine Euclidean space with the fixed structure of a real vector space.
In particular, we know what is $0\in \E$.
The standard scalar product on the vector space structure of $\E$ is denoted by $\langle\cdot,\cdot\rangle$.

\medskip
Let $V$ be a finite set (in $\R^{\infty}=\bigoplus_{n\geq 1}\R^n$) and $\Delta_V$ the simplex with the vertex set $V$.
Without loss of generality we can assume that $V:=\{e_1,\dots, e_k\}$ for $k:=|V|\in\N$ and $e_i$ is the $i$-th vector of the standard basis $(e_i : i\in\N)$ of  $\R^{\infty}$.
Fix an affine map $A\colon \Delta_V\longrightarrow\E$ with the property that  $A(V)\subseteq \E-\{0\}$, or in other words, such that no vertex is mapped to zero.
An (abstract) simplicial complex on the set of vertices $V$ defined by
\begin{equation}\label{def of zero-avoiding complex}
	\CC_A:=\{U\subseteq V : 0\notin \conv(A(U))=A(\conv(U))\}
\end{equation}
is called the {\em zero-avoiding complex} of the affine map $A$.
The set $V$ is indeed the set of vertices of $\CC_V$ because $0\notin A(V)$.
Furthermore, $0\notin \conv(A(U))$ implies that $0\notin \conv(A(U'))$ for every subset $U'\subseteq U$, so $\CC_A$ is  an abstract simplicial complex as announced.

\medskip
For example, let $\E\cong \R$.
Consider $V_1=\{e_1,e_2\}$ and $V_2=\{e_1,e_2,\dots, e_k\}$ with affine maps $A_1\colon\Delta_{V_1}\longrightarrow\E$, $A_1(e_1)=-1$, $A_1(e_2)=1$ and $A_2\colon\Delta_{V_2}\longrightarrow\E$, $A_2(e_i)=i$ for $0\leq i\leq s$.
Then $\CC_{A_1}=[2]$ is the two point space and $\CC_{A_2}=\Delta_{|V_2|}$ is a $(k-1)$-dimensional simplex.

\medskip
Now consider the ``normalisation'' of an affine map $A\colon \Delta_V\longrightarrow\E$  onto the unit sphere $S(\E)$, that is the affine map $\overline{A}\colon \Delta_V\longrightarrow\E$ defined, on the vertices, by
\[
	\overline{A}(v):=\frac{A(v)}{\|A(v)\|} \ \in \ S(\E)
\]
for all $v\in V$.
It is important to notice that the zero-avoiding complexes of $A$ and $\overline{A}$ coincide, or in other words $\CC_A=\CC_{\overline{A}}$.
Indeed, for any collection of points $u_1,\dots, u_s\in \E-\{0\}$ it holds that:
\[
	0\in \conv\{u_1,\dots, u_s\}
	\qquad\text{if and only if}\qquad
	0\in \conv\Big\{\frac{u_1}{\|u_1\|},\dots, \frac{u_s}{\|u_s\|}\Big\}.
\]
Consequently, without loss of generality, from now on, we can assume that $A(V)\subseteq S(\E)$.

\medskip
\subsection{Families of open and closed hemispheres}
\label{subsec : Families of open and closed hemispheres}
In order to understand the topology of the complex $\CC_A$ we make a detour and first study families of open and closed hemispheres on the unit sphere $S(\E)$ and their unions and intersections.

\medskip
For a unit vector $u\in S(\E)$ we denote the associated open and closed hemisphere of $S(\E)$ by
\[
	\mathscr{B}_u:=\{x\in S(\E) : \langle u,x\rangle >0\}
	\qquad\text{and}\qquad
	\mathscr{D}_u:=\{x\in S(\E) : \langle u,x\rangle \geq 0\},
\]
respectively.
Evidently, $\mathscr{B}_u\subseteq \mathscr{D}_u$.
If $U\subseteq S(\E)$ is a non-empty finite collection of unit vectors in $\E$ then we set
\[
	\mathscr{B}(U):=\bigcap_{u\in U}\mathscr{B}_u,\qquad
	\mathscr{D}(U):=\bigcap_{u\in U}\mathscr{D}_u
	\qquad\text{and}\qquad
	\mathscr{B}\langle U\rangle :=\bigcup_{u\in U}\mathscr{B}_u,\qquad
	\mathscr{D}\langle U\rangle :=\bigcup_{u\in U}\mathscr{D}_u.
\]
It is evident that each $\mathscr{B}(U)$ is either an open $(m-1)$-dimensional ball or empty.
On the other hand nothing can be said a priori about $\mathscr{B}\langle U\rangle$ besides that it is a non-empty open subset of the sphere $S(\E)$.
Therefore, we mostly focus our attention on understanding the topology of the unions.

\medskip
The first non-elementary observation gives the criterion for non-emptiness of $\mathscr{B}(U)$.

\medskip
\begin{lemma}
	\label{lem : non-empty condition}
	Let $U\subseteq S(\E)$ be a finite collection of points.
	Then $\mathscr{B}(U)\neq\emptyset$ if and only if $0\notin\conv(U)$.
\end{lemma}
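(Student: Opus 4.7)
The statement is a sphere-geometric reformulation of the hyperplane separation theorem, so the plan is to prove the two implications by passing between strict inner-product inequalities and convex combinations.

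For the easier direction $(\Leftarrow)$, assuming $\mathscr{B}(U)\neq\emptyset$ I would pick any witness $x\in\mathscr{B}(U)\subseteq S(\E)$, so $\langle u,x\rangle >0$ for each $u\in U$. For an arbitrary convex combination $\sum_{i}\lambda_i u_i$ of elements of $U$, linearity of the inner product gives
\[
	\big\langle \textstyle\sum_i\lambda_i u_i,\,x\big\rangle \;=\;\sum_i\lambda_i\langle u_i,x\rangle \;>\;0,
\]
since the $\lambda_i$ are non-negative, sum to $1$, and each $\langle u_i,x\rangle$ is strictly positive. In particular $\sum_i\lambda_iu_i\neq 0$, so $0\notin\conv(U)$.

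For the reverse direction $(\Rightarrow)$, assuming $0\notin\conv(U)$ I would invoke the hyperplane separation theorem: since $\conv(U)$ is a compact convex subset of the finite-dimensional vector space $\E$ that does not contain $0$, there exists a vector $y\in\E$ and a scalar $\alpha>0$ with $\langle y,w\rangle\geq\alpha$ for every $w\in\conv(U)$, and in particular $\langle y,u\rangle>0$ for every $u\in U$ (it suffices to take $y$ to be the point of $\conv(U)$ nearest to $0$). Setting $x:=y/\|y\|\in S(\E)$ then yields $x\in\mathscr{B}_u$ for each $u\in U$, hence $x\in\mathscr{B}(U)$.

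The substantive input is the separation result in the second direction; the first direction is essentially bookkeeping with a linear functional. There is no real obstacle beyond checking that the strict inequality is preserved under normalization to $S(\E)$, which follows immediately from positivity of the norm. Thus the lemma reduces the topological question about non-emptiness of an open-hemisphere intersection to the standard convex-geometric condition $0\notin\conv(U)$, which is what subsequent arguments can exploit.
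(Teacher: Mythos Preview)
Your argument is correct and matches the paper's proof essentially line for line: one direction uses the linear functional $\langle\cdot,x\rangle$ to show $\conv(U)$ lies in an open half-space not containing $0$, and the other invokes hyperplane separation (with the nearest-point/metric-projection witness) to produce a unit vector in $\mathscr{B}(U)$. Note only that your direction labels are swapped---what you call $(\Leftarrow)$ is the implication $\mathscr{B}(U)\neq\emptyset\Rightarrow 0\notin\conv(U)$, i.e.\ $(\Rightarrow)$, and vice versa.
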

\begin{proof}
	Let $x\in \mathscr{B}(U)\neq\emptyset$, or in other words $\langle x,u\rangle >0$ for all $u\in U$.
	Consider the open half-space
	$
		H^+=\{ y\in \E : \langle x,y\rangle>0\}
	$
	determined by $x$.
	Then $U\subseteq H^+$ and so $\conv(U)\subseteq \conv(H^+)=H^+$.
	Since $0\notin H^+$ it follows that  $0\notin\conv(U)$ as desired.

	\medskip
	Conversely, suppose that $0\notin\conv(U)$.
	Then there exists a hyperplane $H=\{ y\in \E : \langle x_0,y\rangle=0\}$, for some $x_0\in S(\E)$, with the property that  $\conv(U)\subseteq H^+=\{ y\in \E : \langle x_0,y\rangle>0\}$.
	Since $U\subseteq \conv(U)$ it follows that $\langle x_0,u\rangle>0$ for every $u\in U$.
	Consequently, $x_0\in \bigcap_{u\in U}\mathscr{B}_u=\mathscr{B}(U)$, and so $\mathscr{B}(U)\neq\emptyset$, as claimed.

	Note that for $x_0$ we can take the vector $
		\frac{\overrightarrow{0\,p_{\conv(U)}(0)}}{\|\overrightarrow{0\,p_{\conv(U)}(0)}\|}$,
	where $p_{\conv(U)}$ denotes the metric projection of the convex body $\conv(U)$.
	For a definition and properties of the metric projection, consult for example~\cite[Sec.\,4.1]{Gruber2007}.
\end{proof}

\medskip
In the case of families of closed hemispheres the situation is a bit more delicate as we will see in the following lemmas.

\medskip
\begin{lemma}
	\label{lem : non-empty condition for closed}
	Let $U\subseteq S(\E)$ be a finite collection of points.
	Then $ \mathscr{D}(U)\neq\emptyset$ if and only if there exists a vector $x\in S(\E)$ such that $U\subseteq \mathscr{D}_x$.
\end{lemma}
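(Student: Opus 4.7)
The plan is to exploit the symmetry of the inner product and reduce the statement to a direct unfolding of definitions. The key observation is that the closed hemisphere $\mathscr{D}_u=\{x\in S(\E):\langle u,x\rangle\geq 0\}$ is symmetric in the roles of $u$ and $x$: since $\langle u,x\rangle=\langle x,u\rangle$, we have $x\in \mathscr{D}_u$ if and only if $u\in \mathscr{D}_x$. This single identity will do all the work.

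With this symmetry in hand, both implications follow immediately. For the forward direction, I would pick any point $x\in \mathscr{D}(U)=\bigcap_{u\in U}\mathscr{D}_u$; by definition $\langle u,x\rangle\geq 0$ for every $u\in U$, and symmetry then gives $u\in \mathscr{D}_x$ for every $u\in U$, i.e.\ $U\subseteq \mathscr{D}_x$. For the converse, given an $x\in S(\E)$ with $U\subseteq \mathscr{D}_x$, the same symmetry yields $\langle u,x\rangle\geq 0$ for every $u\in U$, hence $x\in \mathscr{D}_u$ for every $u\in U$, so $x\in \mathscr{D}(U)$ and in particular $\mathscr{D}(U)\neq\emptyset$.

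There is essentially no obstacle here: the statement is a tautological reformulation of the definition of $\mathscr{D}(U)$ once the self-duality $x\in\mathscr{D}_u\iff u\in\mathscr{D}_x$ is noted. Unlike the open analogue of Lemma~\ref{lem : non-empty condition}, no separating hyperplane theorem or metric-projection argument is needed; the convex-geometric content of the closed case is encoded through the existence of a witness vector $x$ rather than through a clean condition on $\conv(U)$. I expect that this trivial-looking characterisation is included precisely so that subsequent results can manipulate the closed hemispheres $\mathscr{D}_u$ by swapping the roles of ``centre'' and ``point,'' an interchange that is false for the open hemispheres $\mathscr{B}_u$ only on the measure-zero boundary and which turns out to be the right perspective for the later nerve computations.
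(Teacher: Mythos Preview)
Your proof is correct and matches the paper's own argument essentially verbatim: both directions are obtained by unfolding the definitions and using the symmetry $\langle x,u\rangle=\langle u,x\rangle$ to swap the roles of centre and point. There is nothing to add.
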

\begin{proof}
	Let $x\in \mathscr{D}(U)\neq\emptyset$.
	Then for every $u\in U$ holds $\langle x,u\rangle\geq 0$.
	Hence, $U\subseteq \{ y\in S(\E) : \langle x, y\rangle\geq 0\}=\mathscr{D}_x$, as claimed.

	\medskip
	Suppose that $U\subseteq \mathscr{D}_x=\{ y\in S(\E) : \langle x, y\rangle\geq 0\}$ for some $x\in S(\E)$.
	Then $\langle x, u\rangle\geq 0$  for every $u\in U$ implying that $x\in \mathscr{D}(U)$.
	Hence, $\mathscr{D}(U)\neq\emptyset$.
\end{proof}

\medskip
Unlike in the case of the open hemisphere, the intersection of closed hemispheres is not always empty or contractible.
For the complete description we recall the notion of Robinson (spherical) convexity~\cite[Sec.\,9.1.\,(R)]{DanzerEtAl1963} and immediately obtain the following claim.

\begin{lemma}
	\label{lem : Robinson convex}
	Let $U\subseteq S(\E)$ be a finite collection of points, and assume that $\mathscr{D}(U)\neq\emptyset$.
	Then $\mathscr{D}(U)$ is Robinson convex and so either contractible or equal to an equatorial sphere $S^{\ell}$ for some $0\leq \ell\leq \dim(\E)-2$.
	In particular, if $U$ does not contain any pair of antipodal points, in other words if $U\cap (-U)=\emptyset$, then $\mathscr{D}(U)$ is contractible.
	Here, $-U:=\{-u : u\in U\}$.

\end{lemma}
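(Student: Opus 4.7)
The plan is threefold: establish Robinson convexity of $\mathscr{D}(U)$ from first principles, extract its topology from an associated polyhedral cone, and then handle the ``in particular'' clause separately. For the first step, I would check that each closed hemisphere $\mathscr{D}_u$ is Robinson convex: the chord between two non-antipodal points $p, q \in \mathscr{D}_u$ lies in the closed half-space $\{x\in\E : \langle x, u\rangle \geq 0\}$ by convexity, and its radial projection onto $S(\E)$ (the shorter spherical geodesic) stays in $\mathscr{D}_u$. Since arbitrary intersections of Robinson convex sets are Robinson convex, $\mathscr{D}(U) = \bigcap_{u\in U}\mathscr{D}_u$ is Robinson convex.

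For the topological classification I would work with the closed polyhedral cone $C_U := \{x \in \E : \langle x, u\rangle \geq 0 \text{ for all } u \in U\}$, so that $\mathscr{D}(U) = S(\E) \cap C_U$. Let $L := C_U \cap (-C_U)$ be its lineality subspace (a direct check gives $L = \sspan(U)^\perp$) and decompose $C_U = L + C'$ with $C' := C_U \cap L^\perp$ a pointed polyhedral cone in $L^\perp$. This yields a clean dichotomy: either (a) $C' = \{0\}$, in which case $C_U = L$ and $\mathscr{D}(U) = S(L)$ is an equatorial sphere $S^\ell$ with $\ell = \dim L - 1 \leq \dim(\E) - 2$ (using $U \neq \emptyset$ to force $L \subsetneq \E$); or (b) $C' \neq \{0\}$ and one picks a unit vector $z_0 \in C' \cap S(\E)$ and contracts $\mathscr{D}(U)$ onto $\{z_0\}$ via the spherical straight-line homotopy $h_t(x) := \big((1-t)x + t z_0\big)/\|(1-t)x + t z_0\|$. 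The segment $(1-t)x + t z_0$ lies in $C_U$ by convexity and must avoid the origin, because a vanishing at an interior time $t_\ast$ would place the $L^\perp$-part of $x$ on the ray opposite to $z_0$ inside the pointed cone $C'$, a contradiction. Hence $\mathscr{D}(U)$ deformation retracts to a point.

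For the ``in particular'' clause, it remains to rule out case (a) under the hypothesis $U \cap (-U) = \emptyset$. Case (a) is equivalent to $\cone(U)$ being a linear subspace, i.e., $-u \in \cone(U)$ for every $u \in U$, and this is the step I expect to be the main obstacle, since $-u \in \cone(U)$ is \emph{a priori} a weaker condition than $-u \in U$. The natural line of attack is to take a minimal non-negative representation $-u = \sum_{v} \lambda_v v$ and leverage extremality of the unit vectors on $S(\E)$ to produce an antipode $-u \in U$ itself, contradicting $U \cap (-U) = \emptyset$; alternatively, in the intended applications of the lemma the stronger non-degeneracy $0 \notin \conv(U)$ is automatic (every face of $\CC_A$ has this property by definition), which makes $C_U$ pointed outright and delivers contractibility via case (b) directly.
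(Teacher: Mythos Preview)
The paper supplies no proof of this lemma; it is stated as an immediate consequence of the Robinson convexity notion cited from the Danzer--Gr\"unbaum--Klee survey. Your argument for the two main claims---that each $\mathscr{D}_u$ is Robinson convex and hence so is their intersection, and the dichotomy via the lineality decomposition $C_U=L+C'$ with the explicit contraction in case~(b)---is correct and considerably more explicit than anything the paper offers.

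Your hesitation about the ``in particular'' clause is well founded: that clause is \emph{false} as written, so no argument will close it. Take $\E=\R^3$ and
\[
U=\Big\{(1,0,0),\ \big(-\tfrac12,\tfrac{\sqrt3}{2},0\big),\ \big(-\tfrac12,-\tfrac{\sqrt3}{2},0\big)\Big\}.
\]
Then $U\cap(-U)=\emptyset$, yet $\cone(U)=\R^2\times\{0\}$ is a linear subspace, so $C_U=\{0\}\times\{0\}\times\R$ and $\mathscr{D}(U)=\{(0,0,\pm1)\}\cong S^0$, which is not contractible. Your first suggested attack (an extremality argument forcing $-u\in U$) therefore cannot succeed. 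Your second remark is the correct repair, with one small adjustment: the hypothesis $0\notin\conv(U)$ does not make $C_U$ pointed (its lineality space is always $\sspan(U)^\perp$, independent of that hypothesis), but it does guarantee $C'\neq\{0\}$, since the $L^\perp$-component of any point of $\mathscr{B}(U)$ lies in $C'\setminus\{0\}$; this places you in case~(b) and yields contractibility. As the lemma is not invoked elsewhere in the paper, the defect in the ``in particular'' clause does not affect the main results.
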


\medskip
Switching our attention to the topology of the unions of families of open hemispheres we start with a criterion for $\mathscr{B}\langle U\rangle$ to cover the sphere $S(\E)$.

\medskip
\begin{lemma}
	\label{lem : covering of the sphere}
	Let $U\subseteq S(\E)$ be a finite collection of points.
	Then $\mathscr{B}\langle U\rangle =S(\E)$ if and only if $0\in\interior (\conv(U))$.
\end{lemma}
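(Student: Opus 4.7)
The plan is to prove both implications by contrapositive, exploiting duality between a supporting hyperplane of $\conv(U)$ and a point on $S(\E)$ that is missed by every open hemisphere.

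For the ``only if'' direction, I would assume $0\notin\interior(\conv(U))$ and exhibit a point of $S(\E)$ not covered by $\mathscr{B}\langle U\rangle$. If $0\notin\conv(U)$, Lemma~\ref{lem : non-empty condition} already furnishes $x\in\mathscr{B}(U)$, i.e.\ $\langle u,x\rangle>0$ for every $u\in U$, and then the antipode $-x$ satisfies $\langle u,-x\rangle<0$ for all $u$, so $-x\notin\mathscr{B}\langle U\rangle$. If instead $0\in\partial\conv(U)$, I invoke the supporting hyperplane theorem (see e.g.\ \cite[Sec.\,4.1]{Gruber2007}) to get $x\in S(\E)$ with $\langle x,u\rangle\geq 0$ for all $u\in U$; then $\langle u,-x\rangle\leq 0$ for every $u\in U$, so again $-x\notin\mathscr{B}_u$ for any $u$, contradicting $\mathscr{B}\langle U\rangle=S(\E)$.

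For the ``if'' direction, I would argue by contradiction as well: suppose $0\in\interior(\conv(U))$ but some $x\in S(\E)$ fails to lie in any $\mathscr{B}_u$. Then $\langle u,x\rangle\leq 0$ for every $u\in U$, hence $\conv(U)$ is contained in the closed half-space $H^-=\{y\in\E:\langle x,y\rangle\leq 0\}$. But then no neighbourhood of $0$ fits inside $\conv(U)$, because for every $\varepsilon>0$ the point $\varepsilon x$ satisfies $\langle x,\varepsilon x\rangle=\varepsilon>0$, contradicting $0\in\interior(\conv(U))$.

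Neither direction presents a real obstacle; the only subtlety is the split between the cases $0\notin\conv(U)$ and $0\in\partial\conv(U)$ in the first direction, which is handled uniformly by a supporting hyperplane argument (the first case being its strict separation specialization, already packaged in Lemma~\ref{lem : non-empty condition}).
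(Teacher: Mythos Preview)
Your proof is correct. For the ``only if'' direction, your argument coincides with the paper's: both pass to the contrapositive and produce, via a supporting hyperplane, a unit vector $x$ with $\langle x,u\rangle\geq 0$ for all $u\in U$, whose antipode $-x$ is then missed by $\mathscr{B}\langle U\rangle$. (The paper does this in one stroke rather than splitting into the cases $0\notin\conv(U)$ and $0\in\partial\conv(U)$, but the content is the same.) For the ``if'' direction, your route is slightly different and arguably more direct: the paper invokes the characterization that $0\in\interior(\conv(U))$ yields a strictly positive convex combination $\sum_{u\in U}t_u\,u=0$ (citing~\cite[Thm.\,6]{McMullenShephard}) and then, for each $y\in S(\E)$, uses $\sum t_u\langle u,y\rangle=0$ together with the fact that $U$ positively spans $\E$ to find some $u_0$ with $\langle u_0,y\rangle>0$. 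You instead argue by contradiction that a missed point $x$ forces $\conv(U)\subseteq\{y:\langle x,y\rangle\le 0\}$, which immediately excludes $\varepsilon x$ from $\conv(U)$ for every $\varepsilon>0$. Your version avoids the external reference and the positive-spanning observation; the paper's version is constructive in that it explicitly identifies the hemisphere covering a given $y$.
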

\begin{proof}
	We prove the left-to-right implication via contraposition.
	In other words, we prove the implication:
	\[
		0\notin\interior ( \conv(U) )\ \Longrightarrow \ \mathscr{B}\langle U\rangle\neq S(\E).
	\]
	Suppose $0\notin\interior (\conv(U))$.
	Then either
	\[
		\interior (\conv(U))=\emptyset\qquad\text{or}\qquad \interior(\conv(U))\subseteq H^+=\{ y\in \E : \langle x_0,y\rangle>0\}
	\]
	for some $x_0\in S(\E)$.
	In both cases, by choosing $x_0$ appropriately, we can guarantee that
	\[
		\emptyset\neq \conv(U) \ \subseteq \ \cl (H^+)=\{ y\in \E : \langle x_0,y\rangle\geq 0\}.
	\]
	Consequently, for every $u\in U\subseteq\conv(U)$ we have $\langle -x_0,u\rangle\leq 0$.
	In other words, $-x_0\notin\bigcup_{u\in U}\mathscr{B}_u=\mathscr{B}\langle U\rangle$, which implies that $\mathscr{B}\langle U\rangle\neq S(\E)$.

	\medskip
	Conversely, assume that $0\in\interior ( \conv(U))$.
	Then, according to~\cite[Thm.\,6]{McMullenShephard}, for every $u\in U$ there exists $t_u>0$ with the property that
	\[
		\sum_{u\in U}t_u\cdot u=0
		\qquad\text{and}\qquad
		\sum_{u\in U}t_u =1.
	\]
	Take an arbitrary point $y\in S(\E)$, and compute
	\[
		0=\langle 0\,,\,y\rangle=\big\langle\sum_{u\in U}t_u\cdot u\,,\,y\big\rangle= \sum_{u\in U}t_u\cdot\langle u,y\rangle .
	\]
	Now, either $\langle u,y\rangle=0$ for all $u\in U$, or there exists at least one  $u_0\in U$ with the property that $\langle u_0,y\rangle>0$.
	Since, $0\in\interior (\conv(U))$ we have that $U$ positively spans $\E$ (as a vector space) and so indeed there exists some $u_0\in U$ such that $\langle u_0,y\rangle>0$.
	Hence, $y\in \mathscr{B}_{u_0}\subseteq \mathscr{B}\langle U\rangle$ implying that $S(\E)=\mathscr{B}\langle U\rangle$.
\end{proof}

\medskip
From the previous two lemmas it is apparent that if $\mathscr{B}\langle U\rangle =S(\E)$, then $\mathscr{B}(U)=\emptyset$.
Furthermore, the previous lemma suggests that we should next discuss the topology of $\mathscr{B}\langle U\rangle $ in the case when $0\notin\interior (\conv(U))$.
This will be done in several steps starting with an extension of the previous lemma to the case when $0\in \relint (\conv (U))$.

\medskip
\begin{lemma}
	\label{lem :  case 01}
	Let $U\subseteq S(\E)$ be a finite collection of points.
	If $0\in \relint (\conv (U))$, then $S(\E)=\mathscr{B}\langle U\rangle \cup S$ and $\mathscr{B}\langle U\rangle\cap S=\emptyset$, where
	$
		S=\{y\in S(\E) : (\forall u\in U)\ \langle y,u\rangle=0 \}
	$
	is the unit sphere in the orthogonal complement of $\sspan(U)$.
\end{lemma}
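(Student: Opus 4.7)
The plan is to reduce to Lemma~\ref{lem : covering of the sphere} by decomposing $\E$ along the subspace $W:=\sspan(U)$ and its orthogonal complement $W^\perp$. Observe that, by definition, $S=S(W^\perp)$ is the unit sphere of $W^\perp$, so the claim splits naturally along the decomposition $\E=W\oplus W^\perp$.

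First I would note that, since $\sspan(U)=W$, the convex hull $\conv(U)$ is full-dimensional inside $W$, and hence $\relint(\conv(U))=\interior_W(\conv(U))$. Thus the hypothesis $0\in\relint(\conv(U))$ is exactly $0\in\interior_W(\conv(U))$, which by Lemma~\ref{lem : covering of the sphere} applied inside the Euclidean space $W$ yields
\[
	\bigcup_{u\in U}\{z\in S(W) : \langle z,u\rangle>0\}=S(W).
\]

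Next, take an arbitrary $y\in S(\E)$ and write $y=y_1+y_2$ with $y_1\in W$ and $y_2\in W^\perp$. If $y_1=0$, then $y=y_2\in W^\perp$ with $\|y\|=1$, so $y\in S$, and moreover $\langle y,u\rangle=0$ for every $u\in U\subseteq W$. If $y_1\neq 0$, then $y_1/\|y_1\|\in S(W)$ and, by the reduction above, there exists $u_0\in U$ with $\langle y_1,u_0\rangle>0$; since $u_0\in W$ and $y_2\in W^\perp$, we get $\langle y,u_0\rangle=\langle y_1,u_0\rangle>0$, so $y\in\mathscr{B}_{u_0}\subseteq\mathscr{B}\langle U\rangle$. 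Hence $S(\E)=\mathscr{B}\langle U\rangle\cup S$.

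Finally, disjointness is immediate: any $y\in S$ satisfies $\langle y,u\rangle=0$ for every $u\in U$ by definition, so $y$ lies in no open hemisphere $\mathscr{B}_u$ and therefore $y\notin\mathscr{B}\langle U\rangle$. The only even mildly non-trivial ingredient is the identification $\relint(\conv(U))=\interior_W(\conv(U))$, which is a standard fact about convex sets in the affine hull they span; once this is in place, the statement follows by a clean orthogonal-decomposition argument combined with the previous lemma.
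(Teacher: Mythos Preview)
Your proof is correct. It takes a slightly different route from the paper's own argument, so a brief comparison is worthwhile.

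The paper proceeds directly: for $x\in S(\E)\setminus S$ there is some $u_0\in U$ with $\langle x,u_0\rangle\neq 0$, and the hypothesis $0\in\relint(\conv(U))$ provides strictly positive coefficients $t_u>0$ with $\sum_{u\in U}t_u\,u=0$; taking the inner product with $x$ forces some term $\langle x,u_1\rangle$ to be strictly positive, whence $x\in\mathscr{B}\langle U\rangle$. Your approach instead packages this step as an application of Lemma~\ref{lem : covering of the sphere} inside $W=\sspan(U)$, combined with the orthogonal decomposition $y=y_1+y_2$. The underlying mechanism is the same positive-coefficient fact (which is exactly what drives the proof of Lemma~\ref{lem : covering of the sphere}), but your version is more modular---it isolates the reduction to the full-dimensional case cleanly and avoids re-running the inner-product computation. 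The paper's version is marginally more self-contained and a line shorter. Both are fine; neither hides anything the other makes explicit.
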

\begin{proof}
	Let $x\in S(\E)-S$.
	Then, there exists $u_0\in U$ with the property that 	$\langle u_0,x\rangle\neq 0$.
	Since $0\in \relint(\conv (U))$ then for every $u\in U$ there is $t_u>0$ such that
	\[
		\sum_{u\in U}t_u\cdot u=0
		\qquad\text{and}\qquad
		\sum_{u\in U}t_u =1,
	\]
	consult~\cite[Thm.\,6]{McMullenShephard}.
	Like in the proof of the previous lemma we get that
	\[
		0=\langle 0\,,\,x\rangle=\big\langle\sum_{u\in U}t_u\cdot u\,,\,x\big\rangle= \sum_{u\in U}t_u\,\langle u,x\rangle .
	\]
	Since $\langle u_0,x\rangle\neq 0$ there is $u_1\in U$ so that  $\langle u_1,x\rangle > 0$.
	Hence, $x\in \mathscr{B}_{u_1}\subseteq \mathscr{B}\langle U\rangle$.
\end{proof}

\medskip
Next we consider the case when $0\in \conv (U) - \relint(\conv (U))$.

\medskip
\begin{lemma}
	\label{lem :  case 02}
	Let $U\subseteq S(\E)$ be a finite collection of points.
	If $0\in \conv (U) - \relint(\conv (U))$, then $\mathscr{B}\langle U\rangle$ is a contractible space.
\end{lemma}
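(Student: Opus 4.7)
The plan is to construct an explicit strong deformation retraction of $\mathscr{B}\langle U\rangle$ onto a single point, namely a unit normal $x_0$ to a proper supporting hyperplane of $\conv(U)$ at the origin. Since $0\in\conv(U)$, the affine hull $W:=\aff(\conv(U))$ is a linear subspace of $\E$. The assumption $0\in\conv(U)-\relint(\conv(U))$ places $0$ on the relative boundary of $\conv(U)$ inside $W$, so classical convex separation inside $W$ yields a supporting hyperplane of $\conv(U)$ through $0$; this hyperplane cannot contain all of $\conv(U)$, because $W$ is the \emph{affine hull} of $\conv(U)$. Normalising its inward normal produces a unit vector $x_0\in S(\E)$ with $\langle x_0,u\rangle\geq 0$ for every $u\in U$ and $\langle x_0,u_0\rangle>0$ for at least one $u_0\in U$. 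Consequently $x_0\in\mathscr{B}_{u_0}\subseteq\mathscr{B}\langle U\rangle$, while simultaneously $-x_0\notin\mathscr{B}\langle U\rangle$, since $\langle -x_0,u\rangle\leq 0$ for every $u\in U$.

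I would then contract $\mathscr{B}\langle U\rangle$ onto $x_0$ via the spherical straight-line homotopy
\[
	H(x,t)\,:=\,\frac{(1-t)x+t\,x_0}{\|(1-t)x+t\,x_0\|},\qquad x\in\mathscr{B}\langle U\rangle,\quad t\in[0,1].
\]
The denominator never vanishes on the domain: if $(1-t)x+t\,x_0=0$ with $\|x\|=1$, then comparing norms forces $t=\tfrac{1}{2}$ and $x=-x_0$, which is impossible since $-x_0\notin\mathscr{B}\langle U\rangle$. To verify that $H(x,t)\in\mathscr{B}\langle U\rangle$ for every $t$, pick $u_1\in U$ with $\langle x,u_1\rangle>0$; since $\langle x_0,u_1\rangle\geq 0$, the combination $(1-t)\langle x,u_1\rangle+t\langle x_0,u_1\rangle$ remains strictly positive for $t\in[0,1)$, while at $t=1$ positivity is instead witnessed by $u_0$. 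Thus $H$ is a continuous homotopy from the identity of $\mathscr{B}\langle U\rangle$ to the constant map at $x_0$, establishing contractibility.

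The one delicate step is the production of the distinguished normal $x_0$: one must notice that $\aff(\conv(U))$ is a linear subspace of $\E$ through $0$, and that a proper supporting hyperplane of $\conv(U)$ at $0$ inside this affine hull cannot contain all of $\conv(U)$, which is precisely what guarantees the strict inequality $\langle x_0,u_0\rangle>0$ for at least one $u_0\in U$ (and hence that $x_0$ actually lies in $\mathscr{B}\langle U\rangle$). Once $x_0$ is in hand, everything else is a routine straight-line retraction in the same spirit as the second half of the proof of Lemma~\ref{lem : covering of the sphere}.
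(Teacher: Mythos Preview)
Your proof is correct and follows essentially the same approach as the paper: both arguments produce a unit vector $x_0$ (the paper calls it $z$) from a supporting hyperplane of $\conv(U)$ at $0$, verify that $x_0\in\mathscr{B}\langle U\rangle$ while $-x_0\notin\mathscr{B}\langle U\rangle$, and then use the identical spherical straight-line homotopy $\frac{(1-t)x+t\,x_0}{\|(1-t)x+t\,x_0\|}$. The only cosmetic difference is that the paper obtains the supporting hyperplane by first identifying the maximal face $\conv(U')$ of $\conv(U)$ containing $0$ in its relative interior, whereas you invoke the supporting-hyperplane theorem directly at the relative boundary point; your verification that the homotopy stays in $\mathscr{B}\langle U\rangle$ is in fact slightly slicker, since you avoid the case split on $u\in U'$ versus $u\in U-U'$.
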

\begin{proof}
	Let $0\in \conv (U) - \relint(\conv (U))$.
	Then there exists a unique maximal proper subset $U'\subsetneq  U$ with the property $0\in \relint\conv (U')$.
	In particular, $\conv (U')$ is a face of the polytope $\conv (U)$.
	Take $H:=\{y\in \E : \langle y,z\rangle=0\}$ to be a support hyperplane of $\conv (U)$ such that $\conv (U')=\conv (U)\cap H$ and $\conv (U)\subseteq H^+:=\{y\in \E : \langle y,z\rangle\geq 0\}$.
	The maximality of $U'$ implies that $U'=U\cap H$.
	Now we have that:
	\[
		(\forall u\in U')\ \langle u,z\rangle =0
		\qquad\text{and}\qquad
		(\forall u\in U-U')\ \langle u,z\rangle >0.
	\]
	Hence, $\langle u,-z\rangle \leq 0$ for every $u\in U$, or in other words, $-z\notin \mathscr{B}\langle U\rangle$.
	On the other hand, since $U-U'\neq\emptyset$, we have that $z\in \mathscr{B}\langle U\rangle$.

	\medskip
	To complete the proof of the lemma we define a strong deformation retraction from $\mathscr{B}\langle U\rangle$ to $\{z\}$ by
	\[
		h\colon \mathscr{B}\langle U\rangle\times [0,1]\longrightarrow \mathscr{B}\langle U\rangle,
		\qquad\qquad
		h(x,s):=\frac{(1-s)\cdot x+s\cdot z}{\| (1-s)\cdot x+s\cdot z\|}.
	\]
	Assuming $h$ is well-defined, it is evident that the map $h(\cdot,0)\colon \mathscr{B}\langle U\rangle\longrightarrow \mathscr{B}\langle U\rangle$ is the identity map and the map $h(\cdot,1)\colon \mathscr{B}\langle U\rangle\longrightarrow \mathscr{B}\langle U\rangle$ is the constant map with the value $z$.
	Hence, it remains to verify that $h$ is well-defined.

	\medskip
	First, we show that the denominator $\|(1-s)\cdot x+s\cdot z\|$ does not vanish.
	Assume contrary, there exist $s\in [0,1]$ and $x\in  \mathscr{B}\langle U\rangle$ such that $(1-s)\cdot x+s\cdot z=0$, or equivalently $(1-s)\cdot x= s\cdot (-z)$.
	The points $x$ and $-z$ are on the sphere $S(\E)$ and so $s\neq 0$ and $s\neq 1$, or in other words $s\in (0,1)$.
	Applying the norm to the equality $(1-s)\cdot x= s\cdot (-z)$ we get that $1-s=s$, and so $s=\frac12$.
	Consequently, $x=-z$.
	This cannot be because $x\in \mathscr{B}\langle U\rangle$ and we have shown that $-z\notin \mathscr{B}\langle U\rangle$.

	\medskip
	Finally, we verify that $\im(h)\subseteq \mathscr{B}\langle U\rangle$.
	In the case $s=0$ or $s=1$ this was already verified.
	For $s\in (0,1)$ we consider the following two cases:
	\begin{compactenum}[\rm\qquad (a)]

		\item Let $x\in \mathscr{B}_u$ for some $u\in U-U'$. Then
		\[
			\langle h(x,s),u\rangle=
			\frac{1-s}{\| (1-s)\cdot x+s\cdot z\|}\,\langle x,u\rangle+
			\frac{s}{\| (1-s)\cdot x+s\cdot z\|}\,\langle z,u\rangle \ > \ 0,
		\]
		because $x\in \mathscr{B}_u$ implies $\langle x,u\rangle> 0$ and $u\in U-U'$ implies $\langle z,u\rangle> 0$.
		Thus, $h(x,s)\in \mathscr{B}_u\subseteq \mathscr{B}\langle U\rangle$.

		\item Let $x\in \mathscr{B}_u$ for some $u\in U'$. Then again
		\[
			\langle h(x,s),u\rangle=
			\frac{1-s}{\| (1-s)\cdot x+s\cdot z\|}\,\langle x,u\rangle+
			\frac{s}{\| (1-s)\cdot x+s\cdot z\|}\,\langle z,u\rangle \ > \ 0,
		\]
		because $x\in \mathscr{B}_u$ implies $\langle x,u\rangle> 0$, $u\in U'$ implies $\langle z,u\rangle= 0$, and finally $s\neq 1$.
		Thus, again $h(x,s)\in \mathscr{B}_u\subseteq \mathscr{B}\langle U\rangle$.
	\end{compactenum}

	\medskip
	Therefore, the strong deformation retraction $H$ is well-defined, and we have completed the proof.
\end{proof}

\medskip
In the final case we analyze topology of $\mathscr{B}\langle U\rangle$ when $0\notin  \conv (U)$.

\medskip
\begin{lemma}
	\label{lem :  case 03}
	Let $U\subseteq S(\E)$ be a finite collection of points.
	If $0\notin  \conv (U)$, then $\mathscr{B}\langle U\rangle$ is a contractible space.
\end{lemma}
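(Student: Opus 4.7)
The plan is to mimic the strong deformation retraction used in Lemma~\ref{lem :  case 02}, but with a better centre-point provided by the hypothesis. Specifically, since $0\notin\conv(U)$, Lemma~\ref{lem : non-empty condition} supplies a unit vector $z\in S(\E)$ with $z\in\mathscr{B}(U)=\bigcap_{u\in U}\mathscr{B}_u$, i.e.\ $\langle z,u\rangle>0$ for every $u\in U$. In particular $z\in\mathscr{B}\langle U\rangle$. Moreover the antipode $-z$ satisfies $\langle -z,u\rangle<0$ for every $u\in U$, hence $-z\notin\mathscr{B}_u$ for any $u$, so $-z\notin\mathscr{B}\langle U\rangle$.

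Next I would define the radial homotopy
\[
	h\colon \mathscr{B}\langle U\rangle\times[0,1]\longrightarrow \mathscr{B}\langle U\rangle,
	\qquad
	h(x,s):=\frac{(1-s)\cdot x+s\cdot z}{\|(1-s)\cdot x+s\cdot z\|},
\]
exactly as in Lemma~\ref{lem :  case 02}, and verify that it is a strong deformation retraction onto $\{z\}$. Then $h(\cdot,0)=\id$ and $h(\cdot,1)\equiv z$ are automatic, so only well-definedness requires attention, which splits into two checks: the denominator never vanishes, and the image stays in $\mathscr{B}\langle U\rangle$.

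Non-vanishing is handled verbatim as before: $(1-s)x+sz=0$ with $\|x\|=\|z\|=1$ forces $s\in(0,1)$, then $s=\frac12$, hence $x=-z$, which contradicts $-z\notin\mathscr{B}\langle U\rangle$ established in the first paragraph. For the containment of the image, suppose $x\in\mathscr{B}_u$ for some $u\in U$; then for every $s\in[0,1]$,
\[
	\langle h(x,s),u\rangle=
	\frac{(1-s)\,\langle x,u\rangle+s\,\langle z,u\rangle}{\|(1-s)\cdot x+s\cdot z\|}\ >\ 0,
\]
because $\langle x,u\rangle>0$ (by $x\in\mathscr{B}_u$) and $\langle z,u\rangle>0$ (by the choice of $z$), so the numerator is a convex combination of two strictly positive numbers, always strictly positive. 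Thus $h(x,s)\in\mathscr{B}_u\subseteq\mathscr{B}\langle U\rangle$.

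I do not anticipate a serious obstacle here: this case is actually softer than Lemma~\ref{lem :  case 02}, because the single witness $z$ belongs to every open hemisphere $\mathscr{B}_u$ simultaneously, so the case split $(u\in U-U')$ versus $(u\in U')$ that was needed there collapses to a single estimate. The deformation retraction therefore exhibits $\mathscr{B}\langle U\rangle$ as contractible, completing the proof.
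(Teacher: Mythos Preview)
Your proof is correct, but it differs from the paper's. The paper does not repeat the deformation-retraction construction of Lemma~\ref{lem :  case 02}; instead it argues via the Nerve theorem: the family $\{\mathscr{B}_u : u\in U\}$ is a good open cover of $\mathscr{B}\langle U\rangle$, and since $0\notin\conv(U)$ gives $\mathscr{B}(U)\neq\emptyset$ by Lemma~\ref{lem : non-empty condition}, the nerve is the full simplex $\Delta_{|U|-1}$, hence $\mathscr{B}\langle U\rangle\simeq\pt$. Your route is more elementary and fully self-contained---no appeal to the Nerve theorem---and, as you note, the single global witness $z\in\mathscr{B}(U)$ makes the verification strictly easier than in Lemma~\ref{lem :  case 02}. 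The paper's route is shorter and dovetails with the nerve machinery used throughout Section~\ref{sec:ZeroAvoidingComplexesAlexanderDualsInducedSubcomplexes}, so it is a natural choice in context; but nothing is lost by your direct argument.
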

\begin{proof}
	Let $U\subseteq S(\E)$ be a finite collection of points and suppose that $0\notin  \conv (U)$.
	Consider the family $\U=\{\mathscr{B}_u : u\in U\}$ as an open and good cover of the  $\mathscr{B}\langle U\rangle=\bigcup_{u\in U}\mathscr{B}_u$, this means that every non-empty intersection is contractible.
	Then, by the Nerve theorem~\cite[Cor.\,4G.3]{Hatcher2002}, the space  $\mathscr{B}\langle U\rangle$ is homotopy equivalent to the nerve $\mathcal{N}_{\U}$ of the covering $\U$.
	Since, $0\notin  \conv (U)$ it follows that $\mathscr{B}(U) =\bigcap_{u\in U}\mathscr{B}_u\neq\emptyset$, see Lemma~\ref{lem : non-empty condition}.
	Therefore, the nerve $\mathcal{N}_{\U}$ is isomorphic to the $(|U|-1)$-dimensional simplex.
	Hence, $\mathscr{B}\langle U\rangle\simeq \mathcal{N}_{\U} \cong \Delta_{|U|-1}\simeq\pt$, as claimed.

\end{proof}

\medskip
In summary, we have proved that for $U\subseteq S(E)$ it holds that:
\[
	\mathscr{B}(U)\ \cong \
	\begin{cases}
		\ \cong D^{\dim(\E)-1}, & 0\notin\conv(U), \\
		\ = \emptyset,          & 0\in\conv(U),
	\end{cases}
\]
and
\begin{equation}
	\label{eq : 00}
	\mathscr{B}\langle U\rangle\
	\begin{cases}
		\ =S(\E),                                                           & 0\in\interior (\conv(U)),            \\
		\ =S(\E)-\{y\in S(\E) : (\forall u\in U)\ \langle y,u\rangle=0 \} , & 0\in \relint (\conv (U)),            \\
		\ \simeq \pt,                                                       & 0\in \conv (U) - \relint(\conv (U)), \\
		\ \simeq \pt,                                                       & 0\notin  \conv (U).
	\end{cases}
\end{equation}
Here $\pt$ stands for one point topological space.

\medskip
\subsection{Zero-avoiding complex as a nerve}

In this section we connect definition of zero-avoiding complex and the findings of the previous section with a hope to say more about the topology of zero-avoiding complexes.

\medskip
Let $V$ be a finite set (in $\R^{\infty}$) and let $\Delta_V$ be the simplex with the vertex set $V$.
Fix an affine map $A\colon \Delta_V\longrightarrow\E$ from the simplex $\Delta_V$ into a finite dimensional Euclidean space $\E$ such that $A(V)\subseteq S(\E)$.
Consider the family of open hemispheres
\[\U_A:=\{\mathscr{B}_{A(v)} \subseteq S(\E) : v \in V\}\]
induced by the affine map $A$.
In particular,  $\U_{A(V)}$ is a good cover of $\mathscr{B}\langle A(V)\rangle$, that is all intersections of elements of  $\U_{A(V)}$ are either empty of contractible.

\medskip
First, we relate the nerve of the cover $\U_A$ with the zero-avoiding complex $\CC_A$.

\medskip
\begin{lemma}
	\label{lem : nerve}
	Let $V$ be a finite set and let $A\colon \Delta_V\longrightarrow\E$ be an affine map such that $A(V)\subseteq S(\E)$.
	Then  $\CC_{A}$ is isomorphic, as a simplicial complex, to the nerve $\NN_{\U_A}$ of the open cover $\U_A$.
\end{lemma}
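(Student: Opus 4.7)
The plan is to show that the identity on $V$ induces a simplicial isomorphism between $\CC_A$ and $\NN_{\U_A}$. Both complexes are naturally indexed by $V$: the complex $\CC_A$ has vertex set $V$ by construction (since $A(V)\subseteq S(\E)\subset\E-\{0\}$, every singleton $\{v\}$ satisfies $0\notin A(\conv\{v\})=\{A(v)\}$), while the nerve $\NN_{\U_A}$ of the family $\U_A=\{\mathscr{B}_{A(v)} : v\in V\}$ has vertex set $V$ as an indexed cover. Hence it suffices to verify that the two complexes have the same faces.

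The key step is the equivalence, for $U\subseteq V$,
\[
	U\in \CC_A \iff \bigcap_{v\in U}\mathscr{B}_{A(v)}\neq\emptyset,
\]
which is exactly the statement that $U\in\CC_A$ iff $\{\mathscr{B}_{A(v)} : v\in U\}$ forms a face of $\NN_{\U_A}$. By the definition given in \eqref{def of zero-avoiding complex}, $U\in\CC_A$ precisely when $0\notin\conv(A(U))$. On the other hand, $\bigcap_{v\in U}\mathscr{B}_{A(v)}=\mathscr{B}(A(U))$ by definition, so by Lemma~\ref{lem : non-empty condition} applied to the finite set $A(U)\subseteq S(\E)$, the intersection is non-empty if and only if $0\notin\conv(A(U))$. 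Chaining these two equivalences gives the desired characterisation.

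Thus the bijection $v\leftrightarrow\mathscr{B}_{A(v)}$ extends to a simplicial isomorphism $\CC_A\cong \NN_{\U_A}$, completing the proof. There is no real obstacle here: the statement is essentially a repackaging of Lemma~\ref{lem : non-empty condition}. The only minor subtlety to flag is that we use $\U_A$ as an indexed family (so that possible coincidences $A(v)=A(v')$ for distinct $v,v'\in V$ do not collapse vertices of the nerve), which matches the convention that both complexes have vertex set $V$.
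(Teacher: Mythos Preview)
Your proof is correct and follows essentially the same approach as the paper: both arguments reduce the claim to the chain of equivalences $U\in\NN_{\U_A}\Leftrightarrow\mathscr{B}(A(U))\neq\emptyset\Leftrightarrow 0\notin\conv(A(U))\Leftrightarrow U\in\CC_A$, invoking Lemma~\ref{lem : non-empty condition} for the middle step. Your added remark about treating $\U_A$ as an indexed family is a welcome clarification that the paper leaves implicit.
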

\begin{proof}
	The claim follows directly from the definition of the nerve and Lemma~\ref{lem : non-empty condition}.
	Indeed, for $U\subseteq V$ the following equivalences hold:
	\begin{align*}
		U\text{ spans a simplex in the nerve of }\U_A \quad & \Longleftrightarrow	\quad \mathscr{B}( A(U))\neq \emptyset            \\
		                                                    & \Longleftrightarrow	\quad 0\notin\conv(A(U))                          \\
		                                                    & \Longleftrightarrow	\quad A(U) \text{ spans a simplex in }\CC_{A(V)}.
	\end{align*}
\end{proof}

\medskip
Now, using  Lemma~\ref{lem :  case 01}, Lemma~\ref{lem :  case 02}, Lemma~\ref{lem :  case 03}, Lemma~\ref{lem : nerve}, and the Nerve theorem~\cite[Cor.\,4G.3]{Hatcher2002} we get the following characterisation of a zero-avoiding complex.
The next lemma coincides with \cite[Thm.\,3.4]{Bludov2025}.

\medskip
\begin{lemma}
	\label{lem: topology of convexity complex}
	Let $V$ be a finite set and let $A\colon \Delta_V\longrightarrow\E$ be an affine map such that $A(V)\subseteq S(\E)$.
	Then
	\[
		\CC_A \ \simeq \
		\begin{cases}
			\  S^{\dim(\spann(A(V)))-1} , & 0\in \relint (\conv (A(V))),    \\
			\   \pt,                      & 0\notin  \relint(\conv (A(V))).
		\end{cases}
	\]
\end{lemma}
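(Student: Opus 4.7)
The plan is to combine Lemma~\ref{lem : nerve} with the Nerve theorem and then read off the homotopy type from the case analysis already carried out in Lemmas~\ref{lem :  case 01}--\ref{lem :  case 03}. Observe first that the family $\U_A=\{\mathscr{B}_{A(v)}:v\in V\}$ is a good open cover of $\mathscr{B}\langle A(V)\rangle$: by Lemma~\ref{lem : non-empty condition}, each non-empty intersection $\mathscr{B}(A(U))$ is an open $(\dim\E{-}1)$-ball, hence contractible. Combining the Nerve theorem~\cite[Cor.\,4G.3]{Hatcher2002} with Lemma~\ref{lem : nerve} therefore gives
\[
  \CC_A\ \cong\ \NN_{\U_A}\ \simeq\ \mathscr{B}\langle A(V)\rangle,
\]
so the lemma reduces to determining the homotopy type of $\mathscr{B}\langle A(V)\rangle$ in the two regimes.

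When $0\notin\relint\conv(A(V))$, the two sub-cases $0\notin\conv(A(V))$ and $0\in\conv(A(V))-\relint\conv(A(V))$ are settled directly by Lemmas~\ref{lem :  case 03} and~\ref{lem :  case 02}, each giving $\mathscr{B}\langle A(V)\rangle\simeq\pt$ and hence $\CC_A\simeq\pt$.

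When $0\in\relint\conv(A(V))$, I split according to whether $0$ lies in the topological interior of $\conv(A(V))$ or only in its relative interior. Since $0\in\conv(A(V))$, the affine hull of $A(V)$ agrees with $\spann(A(V))$, so $0\in\interior\conv(A(V))$ exactly when $k:=\dim\spann(A(V))=\dim\E$. In that case Lemma~\ref{lem : covering of the sphere} gives $\mathscr{B}\langle A(V)\rangle=S(\E)=S^{k-1}$. Otherwise Lemma~\ref{lem :  case 01} yields $\mathscr{B}\langle A(V)\rangle=S(\E)-S$, where $S$ is the unit sphere in $\spann(A(V))^{\perp}$, a subsphere of codimension $k$ in $S(\E)$.

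The main obstacle is to identify this last space with $S^{k-1}$ up to homotopy. Using the orthogonal decomposition $\E=\spann(A(V))\oplus\spann(A(V))^{\perp}$, decompose every $x\in S(\E)-S$ as $x=x_1+x_2$ with $x_1\in\spann(A(V))$, $x_1\neq 0$, and consider the straight-line homotopy from $x$ to $x_1/\|x_1\|$, normalised back to $S(\E)$. A direct check, analogous to the one in the proof of Lemma~\ref{lem :  case 02}, shows that $(1-t)x+t(x_1/\|x_1\|)$ has a strictly positive $\spann(A(V))$-component for every $t\in[0,1]$, hence never equals $0$ and never lands in $S$; this exhibits a strong deformation retraction of $S(\E)-S$ onto the equator $S(\spann(A(V)))\cong S^{k-1}$. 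Assembling the four sub-cases through the Nerve reduction yields the claimed dichotomy.
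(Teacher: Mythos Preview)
Your proof is correct and follows the paper's argument essentially verbatim: nerve identification via Lemma~\ref{lem : nerve}, Nerve theorem reduction to $\mathscr{B}\langle A(V)\rangle$, then the case analysis of Lemmas~\ref{lem : covering of the sphere}--\ref{lem :  case 03}. The only divergence is in the final identification of $S(\E)-S$ with $S^{k-1}$: you build an explicit deformation retraction onto the equatorial sphere $S(\spann(A(V)))$, whereas the paper invokes the join decomposition $S(\E)\cong S(W)*S(W^{\perp})$ together with the general fact $X*Y-Y\simeq X$; both arguments are standard and yield the same conclusion.
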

\begin{proof}
	There is a sequence of relations
	\[
		\CC_A\overset{\text{Lem. }\ref{lem : nerve}}{\cong}\NN_{\U_A}\overset{\text{Nerve thm.}}{\simeq}\bigcup_{u\in V}\mathscr{B}_{A(u)}=\mathscr{B}\langle A(V)\rangle ,
	\]
	which allows us to use Lemma~\ref{lem :  case 01}, Lemma~\ref{lem :  case 02} and Lemma~\ref{lem :  case 03}, or equivalently \eqref{eq : 00}.

	\medskip
	It remains to show that in the case when $ 0\in \relint\conv (A(V))$ we have
	\begin{equation}
		\label{eq : 01}
		S(\E)-\{y\in S(\E) : (\forall u\in V)\ \langle y,A(u)\rangle=0 \}\ \simeq \ S^{\dim (\spann(A(V)))-1}.
	\end{equation}
	Set $W=\spann(A(V))$, and denote by $W^{\perp}$ its orthogonal complement.
	Then
	\[\{y\in S(\E) : (\forall u\in V)\ \langle y,A(u)\rangle=0 \}\,=\,S(W^{\perp}).\]
	The relation \eqref{eq : 01} follows from the following observations about the (topological) joins:
	\[
		S(\E)\cong S(W\oplus W^{\perp})\cong S(W)*S(W^{\perp})
		\qquad\text{and}\qquad
		X*Y-Y\simeq X.
	\]
	Here $X$ and $Y$ are assumed to be, for example, finite simplicial complexes.
\end{proof}

\medskip
\subsection{Alexander duals of the zero-avoiding complexes}

The Alexander duality~\cite{Alexander1922} is one of the most frequently used classical results of algebraic topology.
In the context of the category of simplicial complexes it can be formulated as follows.

\medskip
Let $\K$ be a simplicial complex on the vertex set $V$.
The simplicial complex
\[
	\K^*:= \{ F\subseteq V :V-F \notin\K\}
\]
is called the {\em Alexander dual} of $\K$.
In particular, $(\K^*)^*=\K$.
Now, the combinatorial version of the Alexander duality can be stated as follows, see for example~\cite[Thm.\,1.1]{BjornerTancer2009}.

\medskip
From this point on $R$ is assumed to be a commutative ring with unit, and $\widetilde{H}$ stands for the reduced singular or simplicial (co)homology with coefficients in $R$-modules.

\medskip
\begin{theorem}
	\label{th : combinatorial Alexander duality}
	Let $\K$ be a simplicial complex on the finite vertex set $V$ which in not the simplex $\Delta_V$.
	Then for every integer $i\in\Z$ and every coefficient $R$-module $M$ there are isomorphisms
	\begin{equation}
		\label{iso : Alexander 01}
		\widetilde{H}_i(\K;M)\cong  \widetilde{H}^{|V|-3-i}(\K^*;M)
		\qquad\text{and}\qquad
		\widetilde{H}_i(\K^*;M)\cong  \widetilde{H}^{|V|-3-i}(\K;M).
	\end{equation}
\end{theorem}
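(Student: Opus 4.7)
The plan is to deduce combinatorial Alexander duality from the classical Alexander duality on the sphere $|\partial\Delta_V|\cong S^{|V|-2}$, by identifying $|\K^*|$ up to homotopy with the complement of $|\K|$ in this sphere.

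Since $\K\ne\Delta_V$, we have $V\notin\K$, hence $\emptyset\in\K^*$; and since $\emptyset\in\K$, we also have $V\notin\K^*$. Consequently $|\K|$ and $|\K^*|$ are both proper subcomplexes of $|\partial\Delta_V|$. Classical Alexander duality applied to the compact pair $(S^{|V|-2},|\K|)$ yields, for every $R$-module $M$ and every integer $i$,
\[
\widetilde H_i(\K;M)\ \cong\ \widetilde H^{|V|-3-i}\bigl(S^{|V|-2}\setminus|\K|;M\bigr).
\]
The remaining task is therefore to produce a homotopy equivalence $|\partial\Delta_V|\setminus|\K|\simeq|\K^*|$.

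I would construct this equivalence through the barycentric subdivision $\mathrm{sd}(\partial\Delta_V)$, whose vertices are the barycenters $\hat A$ of non-empty proper subsets $A\subsetneq V$ and whose simplices are chains $A_1\subsetneq\cdots\subsetneq A_k$. Let $\DD\subseteq\mathrm{sd}(\partial\Delta_V)$ be the subcomplex of those chains with $A_1\notin\K$; by downward closure of $\K$ this is equivalent to $A_j\notin\K$ for every $j$. The complementation bijection $A\mapsto V\setminus A$ reverses chain inclusion and converts the condition ``$A_1\notin\K$'' into ``$V\setminus A_1\in\K^*$''; downward closure of $\K^*$ then places the entire reversed chain into $\mathrm{sd}(\K^*)$. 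This yields a simplicial isomorphism $\DD\cong\mathrm{sd}(\K^*)$, hence $|\DD|\cong|\K^*|$.

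It remains to exhibit a deformation retraction $|\partial\Delta_V|\setminus|\K|\to|\DD|$. Every point $x$ in the complement lies in the relative interior of a unique face $\sigma\notin\K$; the standard ``push toward the barycentric dual'' construction slides $x$ along straight lines through the dual cells of the cofaces of $\sigma$ onto $|\DD|$, using that every coface of $\sigma$ is also outside $\K$ (because $\K$ is closed under subfaces), so the homotopy remains in the complement. Assembling these steps produces the first isomorphism of the theorem; the second follows by applying the first to $\K^*$ in place of $\K$ (which is legitimate since $\K^*\ne\Delta_V$) and invoking the involutivity $(\K^*)^*=\K$. The principal technical obstacle is a clean verification that the retraction is continuous across the interfaces of the dual cells, which is best handled by covering $|\partial\Delta_V|\setminus|\K|$ with the open stars in $\mathrm{sd}(\partial\Delta_V)$ of the vertices $\hat A\in|\DD|$ and gluing the local retractions.
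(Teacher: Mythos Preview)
The paper does not supply a proof of this theorem; it is quoted as a known result with the reference \cite[Thm.\,1.1]{BjornerTancer2009}. So there is no ``paper's own proof'' to compare against. Your argument is the classical topological route: embed $|\K|$ in $|\partial\Delta_V|\cong S^{|V|-2}$, invoke Alexander duality for the sphere, and identify the complement with $|\K^*|$ via the complementation isomorphism $\DD\cong\mathrm{sd}(\K^*)$ in the barycentric subdivision. This is correct, and the isomorphism $\DD\cong\mathrm{sd}(\K^*)$ is exactly right (your check that $A_1\notin\K$ forces $A_j\notin\K$ for all $j$ is what makes $\DD$ a subcomplex and makes the complementation land in $\mathrm{sd}(\K^*)$).

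The deformation retraction step is the only place you are sketchy, and you say so yourself. A cleaner way to phrase it, avoiding the ad hoc ``push through dual cells'' description, is to note that $\mathrm{sd}(\K)$ and $\DD$ are \emph{full} subcomplexes of $\mathrm{sd}(\partial\Delta_V)$ on complementary vertex sets; then the standard fact that for a full subcomplex $L$ of a simplicial complex $K$ the complement $|K|\setminus |L|$ deformation retracts onto the full subcomplex on the remaining vertices (by linearly pushing barycentric coordinates of $L$-vertices to zero) gives the retraction in one line. Incidentally, the cited source \cite{BjornerTancer2009} takes a rather different, purely combinatorial chain-level approach that avoids topological Alexander duality altogether; your topological argument is the more traditional one and is perfectly adequate here.
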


\medskip
In this section we focus our attention on the topology of induced sub-complexes of the Alexander duals of zero-avoiding complexes.
More precisely, let $\E$ be a finite dimensional affine Euclidean space with the fixed structure of the real vector space.
Additionally, let $V$ be a finite set (in $\R^{\infty}$) and let $A\colon \Delta_V\longrightarrow\E$ be an affine map such that $A(V)\subseteq S(\E)$.
Suppose $U\subseteq V$.
The main question we want to answer is: {\em What is the topology of the induced sub-complex
\[
	\CC_A^*[U]:=\{F\subseteq U : F\in \CC_A^*\}
\]
of the zero-avoiding complex $\CC_A$? }

\medskip
The answer to the central question of this section starts with the case  $U=V$, or in other words with the Alexander dual of the zero-avoiding complex.

\medskip
Recall, if $V$ is a finite set and $A\colon \Delta_V\longrightarrow\E$ is an affine map such that $A(V)\subseteq S(\E)$, then
\begin{equation}
	\label{eq : dual}
	\CC_A:=\{U\subseteq V : 0\notin \conv(A(U))\}
	\qquad\text{and}\qquad
	\CC_A^*=   \{ F\subseteq V : 0\in \conv(A(V-F)) \}.
\end{equation}

\begin{lemma}
	\label{lem : topology of dual of complexity complex}
	Let $V$ be a finite set and let $A\colon \Delta_V\longrightarrow\E$ be an affine map such that $A(V)\subseteq S(\E)$.
	For every coefficient $R$-module $M$:
	\begin{compactenum}[\quad \rm (1)]
		\item if $0\notin  \relint (\conv (A(V)))$, then for every integer $i$ holds
		\[	 \widetilde{H}_i(\CC_{A}^*;M) =  0,
		\]
		\item if $0\in  \relint (\conv (A(V)))$, then
		\[
			\widetilde{H}_i(\CC_{A}^*;M) \cong
			\begin{cases}
				M, & i= |V|-\dim (\spann(A(V)))-2 ,    \\
				0, & i\neq |V|-\dim (\spann(A(V)))-2 .
			\end{cases}
		\]
	\end{compactenum}
\end{lemma}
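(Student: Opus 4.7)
The plan is to deduce the lemma by combining the homotopy calculation of $\CC_A$ from Lemma~\ref{lem: topology of convexity complex} with the combinatorial Alexander duality isomorphism from Theorem~\ref{th : combinatorial Alexander duality}. Concretely, I would apply the second isomorphism
\[
\widetilde{H}_i(\CC_A^*;M)\,\cong\,\widetilde{H}^{|V|-3-i}(\CC_A;M),
\]
which is valid as soon as $\CC_A\neq\Delta_V$, and then use the known homotopy type of $\CC_A$ to read off the right-hand side. The only subtle point is the degenerate situation $\CC_A=\Delta_V$, which falls outside the hypotheses of Theorem~\ref{th : combinatorial Alexander duality} and has to be treated by hand.

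For part~(2), assume $0\in\relint(\conv(A(V)))$. In particular $0\in\conv(A(V))$, so $V\notin\CC_A$, hence $\CC_A\neq\Delta_V$ and Alexander duality applies. Lemma~\ref{lem: topology of convexity complex} gives $\CC_A\simeq S^{\dim(\spann(A(V)))-1}$, so the reduced cohomology $\widetilde{H}^{j}(\CC_A;M)$ equals $M$ precisely when $j=\dim(\spann(A(V)))-1$ and vanishes otherwise. Substituting $j=|V|-3-i$ gives the non-vanishing degree
\[
i\,=\,|V|-\dim(\spann(A(V)))-2,
\]
exactly as claimed.

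For part~(1), I would split into two subcases. If $0\in\conv(A(V))\setminus\relint(\conv(A(V)))$, then again $V\notin\CC_A$ so Alexander duality applies, and Lemma~\ref{lem: topology of convexity complex} gives $\CC_A\simeq\pt$; hence $\widetilde{H}^{j}(\CC_A;M)=0$ for every $j$, and the duality isomorphism forces $\widetilde{H}_i(\CC_A^*;M)=0$ for every $i$. If instead $0\notin\conv(A(V))$, then $0\notin\conv(A(U))$ for every $U\subseteq V$, so $\CC_A=2^V$; unwinding the definition of the Alexander dual, no $F\subseteq V$ can satisfy $V-F\notin\CC_A$, so $\CC_A^*$ is the empty simplicial complex (with no faces at all, not even $\emptyset$). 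Its reduced chain complex is identically zero, whence $\widetilde{H}_i(\CC_A^*;M)=0$ for every $i$.

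The main obstacle I anticipate is not really a topological difficulty but the bookkeeping of this edge case $\CC_A=\Delta_V$: Theorem~\ref{th : combinatorial Alexander duality} explicitly excludes the full simplex, so one must verify the correct interpretation of $\CC_A^*$ when $\CC_A=2^V$ and confirm that the ``empty Alexander dual'' has vanishing reduced homology in every degree. Once this is settled, everything else is a direct substitution into the duality formula using the homotopy types $S^{\dim(\spann(A(V)))-1}$ or $\pt$ supplied by Lemma~\ref{lem: topology of convexity complex}.
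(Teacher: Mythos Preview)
Your proposal is correct and follows essentially the same route as the paper: apply the combinatorial Alexander duality isomorphism $\widetilde{H}_i(\CC_A^*;M)\cong\widetilde{H}^{|V|-3-i}(\CC_A;M)$ and read off the answer from the homotopy type of $\CC_A$ supplied by Lemma~\ref{lem: topology of convexity complex}. If anything, you are slightly more careful than the paper, since you explicitly isolate the degenerate case $0\notin\conv(A(V))$ in which $\CC_A=\Delta_V$ and Theorem~\ref{th : combinatorial Alexander duality} does not literally apply; your direct verification that the Alexander dual is then the void complex with vanishing reduced homology is the right way to handle it.
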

\begin{proof}
	This is a direct consequence of the Alexander duality isomorphism of Theorem~\ref{th : combinatorial Alexander duality} and Lemma~\ref{lem: topology of convexity complex}.
	More precisely, we have that
	\[
		\widetilde{H}_i(\CC_A^*;M)\cong   \widetilde{H}^{|V|-3-i}(\CC_A;M)
		\quad\text{and}\quad
		\CC_A \ \simeq \
		\begin{cases}
			\  S^{\dim(\spann(A(V)))-1} , & 0\in \relint (\conv (A(V))),     \\
			\   \pt,                      & 0\notin  \relint (\conv (A(V))),
		\end{cases}
	\]
	which implies the statement of the lemma.

	\medskip
	It is important to observe that in the case (2) when $|V|-\dim(\spann(A(V)))=1$ we have that the dual  $\CC_A^*$ is empty and so the corresponding non-vanishing homology appears only in dimension $-1$ as claimed.
\end{proof}

\medskip
We continue with the study of induced sub-complexes in several steps depending on the geometry of the affine map $A$ and a subset $U\subseteq V$ on which the sub-complex is considered.

\medskip
Recall, if $V$ is a finite set and $A\colon \Delta_V\longrightarrow\E$ is an affine map such that $A(V)\subseteq S(\E)$ we have defined that
\begin{equation}\label{eq : definition of CC_V^*[U]}
	\CC_A^*[U]:=\{F\subseteq U : F\in \CC_A^*\}=\{F\subseteq U : 0\in \conv(A(V-F)) \}.
\end{equation}

\medskip
\begin{lemma}
	\label{lem : topology of induced sub-complexes 01 }
	Let $V$ be a finite set, $A\colon \Delta_V\longrightarrow\E$ an affine map such that $A(V)\subseteq S(\E)$, and let $U\subseteq V$.
	If $0\in\conv( A(V-U))$, then $\CC_A^*[U]$ is the simplex $\Delta_U$, that is a $(|U|-1)$-dimensional simplex.
	In particular,
	\[
		0\in\conv( A(V-U)) \quad \Longrightarrow \quad\CC_A^*[U]\simeq\pt.
	\]
\end{lemma}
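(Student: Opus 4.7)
The plan is to observe that the hypothesis $0\in\conv(A(V-U))$ forces every subset of $U$ to lie in $\CC_A^*$, so that the induced sub-complex $\CC_A^*[U]$ is the full power set $2^U$, i.e.\ the simplex $\Delta_U$.

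More concretely, I would start from the alternative description of the induced sub-complex recorded in~\eqref{eq : definition of CC_V^*[U]}, namely
\[
	\CC_A^*[U]=\{F\subseteq U : 0\in \conv(A(V-F))\}.
\]
For any $F\subseteq U$ one has the inclusion $V-F\supseteq V-U$, hence $A(V-F)\supseteq A(V-U)$, and by monotonicity of the convex hull
\[
	\conv(A(V-F))\ \supseteq\ \conv(A(V-U)).
\]
The standing assumption $0\in\conv(A(V-U))$ then gives $0\in\conv(A(V-F))$ for every $F\subseteq U$, which means every subset of $U$ is a face of $\CC_A^*[U]$. Thus $\CC_A^*[U]$ coincides with $2^U$, i.e.\ with the abstract simplex $\Delta_U$ on the vertex set $U$, and therefore with the $(|U|-1)$-dimensional geometric simplex.

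The contractibility statement $\CC_A^*[U]\simeq\pt$ follows immediately, since every simplex is contractible. There is essentially no obstacle here: the lemma is a direct unwinding of the definition of $\CC_A^*$ combined with the monotonicity of convex hulls under set inclusion. The only point worth flagging is the mild degenerate case $U=\emptyset$, where $\Delta_U$ should be interpreted as the $(-1)$-simplex $\{\emptyset\}$, and the statement continues to hold vacuously.
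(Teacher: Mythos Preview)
Your argument is correct and is essentially identical to the paper's own proof: both take an arbitrary $F\subseteq U$, use $V-U\subseteq V-F$ to get $\conv(A(V-U))\subseteq\conv(A(V-F))$, and conclude $F\in\CC_A^*[U]$. The only difference is that you cite~\eqref{eq : definition of CC_V^*[U]} while the paper cites~\eqref{eq : dual}, and you add a remark on the degenerate case $U=\emptyset$.
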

\begin{proof}
	Suppose $U\subseteq V$ and $0\in \conv(A(V-U))$.
	Take an arbitrary $F\subseteq U$.
	Then $V-U\subseteq V-F$, and so $0\in \conv (A(V-U))\subseteq\conv (A (V-F))$.
	From \eqref{eq : dual} follows that $F\in\CC_A^*[U]$.
	Consequently, $\CC_A^*[U]$ is the simplex $\Delta_U$.
\end{proof}

\medskip
From now on assume that $V$ is a finite set, $A\colon \Delta_V\longrightarrow\E$ is an affine map such that $A(V)\subseteq S(\E)$, and that $U\subseteq V$ with  $0\notin\conv (A(V-U))$.
Hence, $U\notin\CC_A^*[U]$, and so $\CC_A^*[U]$ is a sub-complex of the boundary of the $(|U|-1)$-dimensional simplex $\Delta_U$ on the vertex set $U$, which is a $(|U|-2)$-dimensional sphere.
In particular, we get the following fact.

\medskip
\begin{lemma}
	\label{lem : topology of induced sub-complexes 02 }
	Let $V$ be a finite set and let $A\colon \Delta_V\longrightarrow\E$ be an affine map such that $A(V)\subseteq S(\E)$.
	If $U\subseteq V$ and $0\notin\conv (A(V-U))$, then for every integer $i\geq |U|-1$ and every coefficient $R$-module $M$ it holds that:
	\[
		\widetilde{H}_i(\CC_A^*[U];M)\cong 0.
	\]
\end{lemma}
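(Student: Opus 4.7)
The plan is essentially a dimensional vanishing argument, building directly on the observation already recorded in the paragraph preceding the lemma. First I would recall the definition \eqref{eq : definition of CC_V^*[U]}: a subset $F \subseteq U$ belongs to $\CC_A^*[U]$ exactly when $0 \in \conv(A(V-F))$. Specializing to $F = U$ gives $V - F = V - U$, so the hypothesis $0 \notin \conv(A(V-U))$ says precisely that $U \notin \CC_A^*[U]$.

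Next, since every face of $\CC_A^*[U]$ is a subset of $U$ and the full set $U$ itself is excluded, every face has cardinality at most $|U| - 1$, hence dimension at most $|U| - 2$. Thus $\CC_A^*[U]$ is a simplicial subcomplex of the boundary $\partial \Delta_U$ of the $(|U|-1)$-simplex on the vertex set $U$, which is exactly the observation the paper makes just before the lemma. In particular, $\dim \CC_A^*[U] \leq |U| - 2$.

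Finally I would invoke the basic dimensional vanishing of simplicial homology: a simplicial complex of dimension at most $n$ has $\widetilde{H}_i(\cdot; M) = 0$ for all $i > n$ and every coefficient module $M$. Applied with $n = |U| - 2$ this yields
\[
  \widetilde{H}_i(\CC_A^*[U]; M) = 0 \qquad \text{for every } i \geq |U| - 1,
\]
as required. I do not anticipate any real obstacle; the only point worth being careful about is the bookkeeping that a simplicial complex on a vertex set of size $|U|$ in which the top simplex $U$ is missing truly has dimension at most $|U| - 2$, which is immediate from the definition.
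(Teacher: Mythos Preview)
Your argument is correct and follows exactly the paper's own reasoning: the paper observes in the paragraph immediately preceding the lemma that the hypothesis forces $U\notin\CC_A^*[U]$, so $\CC_A^*[U]$ is a sub-complex of $\partial\Delta_U$ and hence has dimension at most $|U|-2$, from which the vanishing of $\widetilde{H}_i$ for $i\geq |U|-1$ is immediate. There is nothing to add.
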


\medskip
Note that Lemma~\ref{lem : topology of induced sub-complexes 01 } allows us to remove the assumption $0\notin\conv (A(V-U))$ from the statement of the previous Lemma~\ref{lem : topology of induced sub-complexes 02 }.
In other words, for every $U\subseteq V$ and every integer $i\geq |U| - 1$ it holds that $ \widetilde{H}_i(\CC_A^*[U];M)\cong 0$.

\medskip
If $0\notin\conv (A(V-U))$ then $\CC_A^*[U]$ is either the boundary of the simplex $\Delta_U$ or a proper
sub-complex of $\partial\Delta_U$.
Hence, the Alexander dual of the simplicial complex $\CC_A^*[U]$ in $\partial\Delta_U$ is the simplicial complex given by:
\begin{equation}
	\label{eq : dual of dual 01}
	(\CC_A^*[U])^*=\{ F\subseteq U : U-F\notin \CC_A^*[U]\} = \{ F\subseteq U : 0\notin \conv (A(F\cup (V-U)))\}.
\end{equation}
Note that in the case when $\CC_A^*[U]=\partial\Delta_U$ the dual $(\CC_A^*[U])^*$ is the empty simplicial complex.
Further, from Lemma~\ref{lem : non-empty condition}, we get that
\begin{equation}
	\label{eq : dual of dual 02}
	(\CC_A^*[U])^*=  \{ F\subseteq U : \mathscr{B}(A(F\cup (V-U)))\neq \emptyset\}= \{ F\subseteq U : \mathscr{B}(A(F))\cap \mathscr{B}(A(V-U))\neq \emptyset\}.
\end{equation}
Hence, $(\CC_{A}^*[U])^*$ can be identified with the nerve of the good open covering
\[\{\mathscr{B}_{A(u)}\cap \mathscr{B}(A(V-U)) : u\in U\}\]
of the open set $\mathscr{B}\langle A(U)\rangle\cap \mathscr{B}(A(V-U))$ of the sphere $S(\E)$.
Now, the  Nerve theorem~\cite[Cor.\,4G.3]{Hatcher2002} evidently implies the following claim.

\medskip
\begin{lemma}
	\label{lem : topology of induced sub-complexes 03 }
	Let $V$ be a finite set and let $A\colon \Delta_V\longrightarrow\E$ be an affine map such that $A(V)\subseteq S(\E)$.
	If $U\subseteq V$ and $0\notin\conv(A(V-U))$, then
	\begin{equation}
		\label{eq : dual of dual 03}
		(\CC_A^*[U])^* \ \simeq \ \mathscr{B}\langle A(U)\rangle\cap \mathscr{B}(A(V-U)).
	\end{equation}
\end{lemma}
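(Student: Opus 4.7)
The plan is to identify $(\CC_A^*[U])^*$ with the nerve of the explicit open covering
\[
\U := \bigl\{\,\B_u := \mathscr{B}_{A(u)}\cap \mathscr{B}(A(V-U)) \ :\ u\in U\,\bigr\}
\]
of the open set $X := \mathscr{B}\langle A(U)\rangle\cap \mathscr{B}(A(V-U))$ of the sphere $S(\E)$, and then invoke the Nerve theorem to conclude homotopy equivalence. The combinatorial bookkeeping has essentially already been assembled in formulas \eqref{eq : dual of dual 01} and \eqref{eq : dual of dual 02} just above the statement, so the work really amounts to (i) confirming that $\U$ is a well-defined open cover of $X$, and (ii) checking that it is \emph{good} in the sense that every non-empty finite intersection is contractible.

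For step (i), each $\B_u$ is open as the intersection of open hemispheres, and by construction $X = \bigcup_{u\in U}\B_u$. For a subset $F\subseteq U$ one computes
\[
\bigcap_{u\in F}\B_u \;=\; \mathscr{B}(A(F))\cap \mathscr{B}(A(V-U)) \;=\; \mathscr{B}\bigl(A(F\cup(V-U))\bigr),
\]
which is precisely the set whose non-emptiness characterises $F\in(\CC_A^*[U])^*$ by \eqref{eq : dual of dual 02}. Consequently the abstract nerve $\NN_{\U}$ coincides, as a simplicial complex on vertex set $U$, with $(\CC_A^*[U])^*$.

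For step (ii), observe that any non-empty intersection $\bigcap_{u\in F}\B_u = \mathscr{B}(A(F\cup(V-U)))$ is itself the intersection of a finite family of open hemispheres on $S(\E)$, and such an intersection is either empty or an open $(\dim\E-1)$-dimensional geodesically convex ball, as recorded immediately after the definition of $\mathscr{B}(U)$ in Subsection~\ref{subsec : Families of open and closed hemispheres}. In particular every non-empty intersection is contractible, so $\U$ is a good open cover of $X$. Applying the Nerve theorem~\cite[Cor.\,4G.3]{Hatcher2002} then gives
\[
(\CC_A^*[U])^* \;\cong\; \NN_{\U} \;\simeq\; X \;=\; \mathscr{B}\langle A(U)\rangle\cap \mathscr{B}(A(V-U)),
\]
which is the desired conclusion.

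The hypothesis $0\notin \conv(A(V-U))$ is used here only to guarantee that $\mathscr{B}(A(V-U))\neq\emptyset$ (via Lemma~\ref{lem : non-empty condition}), so that $X$ is non-empty and the nerve formalism applies without degeneracies; without it, one would instead be in the situation of Lemma~\ref{lem : topology of induced sub-complexes 01 }. There is no real obstacle in the argument; the only point requiring genuine input beyond unwinding definitions is the contractibility of finite intersections of open hemispheres, which was already established when developing the topology of the families $\mathscr{B}(U)$.
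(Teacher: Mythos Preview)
Your proof is correct and follows exactly the approach the paper takes: the paragraph preceding the lemma already identifies $(\CC_A^*[U])^*$ with the nerve of the good open cover $\{\mathscr{B}_{A(u)}\cap\mathscr{B}(A(V-U)):u\in U\}$ via \eqref{eq : dual of dual 02}, and the lemma is then stated as an immediate consequence of the Nerve theorem; you have simply spelled out the verification that the cover is good. One small inaccuracy in your final paragraph: the hypothesis $0\notin\conv(A(V-U))$ guarantees $\mathscr{B}(A(V-U))\neq\emptyset$, but not that $X$ itself is non-empty---however this does not affect the argument, since when $X=\emptyset$ every $\B_u$ is empty, the nerve has no vertices, and correspondingly $(\CC_A^*[U])^*$ is the empty complex (this is the case $\CC_A^*[U]=\partial\Delta_U$ noted just before \eqref{eq : dual of dual 01}).
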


\medskip
Next, we apply the previous Lemma~\ref{lem : topology of induced sub-complexes 03 } in several situations, thereby gaining better insight in the topology of the induced sub-complex $\CC_A^*[U]$.
We start with a general observation about the connectivity of the induced sub-complex.

\medskip
\begin{lemma}
	\label{lem : topology of induced sub-complexes 03.5 }
	Let $V$ be a finite set and let $A\colon \Delta_V\longrightarrow\E$ be an affine map such that $A(V)\subseteq S(\E)$.
	If $U\subseteq V$ and $0\notin\conv(A(V-U))$, then for every integer $i\leq |U|-\dim(\E)-2$ and every coefficient $R$-module $M$ it holds that
	\[
		\widetilde{H}_i(\CC_A^*[U];M)\cong 0.
	\]
\end{lemma}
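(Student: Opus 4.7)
The plan is to combine the combinatorial Alexander duality of Theorem~\ref{th : combinatorial Alexander duality} with the homotopy identification of $(\CC_A^*[U])^*$ established in Lemma~\ref{lem : topology of induced sub-complexes 03 }. First, since the hypothesis $0 \notin \conv(A(V-U))$ forces $V-U \in \CC_A$ and hence $U \notin \CC_A^*$, the complex $\CC_A^*[U]$ is a proper subcomplex of $\Delta_U$. Applying Alexander duality on the vertex set $U$ therefore yields
\[
\widetilde{H}_i(\CC_A^*[U]; M)\ \cong\ \widetilde{H}^{|U|-3-i}\bigl((\CC_A^*[U])^*; M\bigr).
\]
Under the substitution $j = |U|-3-i$, the range $i \leq |U|-\dim(\E)-2$ corresponds exactly to $j \geq \dim(\E)-1$, so it suffices to show that $\widetilde{H}^{j}((\CC_A^*[U])^*; M) = 0$ for every $j \geq \dim(\E)-1$.

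Next, I will invoke Lemma~\ref{lem : topology of induced sub-complexes 03 } to replace $(\CC_A^*[U])^*$ by the homotopy-equivalent open set $W := \mathscr{B}\langle A(U)\rangle \cap \mathscr{B}(A(V-U))$. Supposing that $V-U$ is non-empty (the degenerate case $V = U$ can be addressed directly through Lemma~\ref{lem : topology of dual of complexity complex}), I pick any $v_0 \in V-U$ and note that $W \subseteq \mathscr{B}_{A(v_0)}$; since the open hemisphere $\mathscr{B}_{A(v_0)}$ is homeomorphic to $\R^{\dim(\E)-1}$, the space $W$ is realised as an open subset of Euclidean $(\dim(\E)-1)$-space.

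The final step is to invoke the classical manifold fact that every non-empty open subset of $\R^{m}$ is an $m$-manifold without boundary whose connected components are all open, hence non-compact, and therefore has vanishing singular cohomology with arbitrary $R$-module coefficients in every degree $\geq m$. Applying this with $m = \dim(\E)-1$ gives $\widetilde{H}^{j}(W; M) = 0$ for every $j \geq \dim(\E)-1$, which combined with the Alexander duality isomorphism yields the required vanishing. The main (although classical) obstacle is precisely this manifold-theoretic vanishing of top-dimensional cohomology with arbitrary coefficient modules; a clean justification proceeds either through Poincar\'e--Lefschetz duality between ordinary cohomology and Borel--Moore homology of a non-compact manifold, or by exhibiting via Morse theory on a proper exhaustion function that $W$ admits the homotopy type of a CW complex of dimension at most $\dim(\E)-2$.
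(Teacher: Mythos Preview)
Your argument is correct and follows essentially the same route as the paper: apply combinatorial Alexander duality on the vertex set $U$, invoke Lemma~\ref{lem : topology of induced sub-complexes 03 } to identify $(\CC_A^*[U])^*$ with an open subset of a hemisphere $\cong \R^{\dim(\E)-1}$, and conclude by the vanishing of cohomology in degrees $\geq \dim(\E)-1$. The only difference is cosmetic---the paper sits $W$ inside the smaller ball $\mathscr{B}(A(V-U))$ rather than a single hemisphere $\mathscr{B}_{A(v_0)}$, and simply asserts the top-degree vanishing that you take the trouble to justify via non-compactness of the components.
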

\begin{proof}
	Since $0\notin\conv(A(V-U))$ from Lemma~\ref{lem : topology of induced sub-complexes 03 } we conclude that
	\[
		(\CC_A^*[U])^* \ \simeq \ \mathscr{B}\langle A(U)\rangle\cap \mathscr{B}(A(V-U))\ \subseteq \  \mathscr{B}(A(V-U)) \ \cong \R^{\dim(\E)-1}.
	\]
	Hence, $\widetilde{H}^j((\CC_A^*[U])^*;M)\cong 0$ for all $j\geq \dim(\E)-1$.
	Now, the Alexander duality isomorphism \eqref{iso : Alexander 01} implies that
	\[
		\widetilde{H}_i(\CC_A^*[U];M)\ \cong  \ \widetilde{H}^{|U|-3-i}((\CC_A^*[U])^* ;M)
		\ \cong  \ 0
	\]
	for all $|U|-3-i\geq \dim(\E)-1$, or equivalently for all $i\leq |U|-\dim(\E)-2$, as claimed.
\end{proof}

\medskip
Combining Lemma~\ref{lem : topology of induced sub-complexes 01 } with the previous Lemma~\ref{lem : topology of induced sub-complexes 03.5 } we can remove the condition $0\notin\conv(A(V-U))$ and obtain the following corollary.

\medskip
\begin{corollary}\label{cor : topology of induced sub-complexes 03.5 }
	Let $V$ be a finite set, let $A\colon \Delta_V\longrightarrow\E$ be an affine map such that $A(V)\subseteq S(\E)$, and let $U\subseteq V$.
	For every integer $i\leq |U|-\dim(\E)-2$ and every coefficient $R$-module $M$ it holds that
	\[
		\widetilde{H}_i(\CC_A^*[U];M)\cong 0.
	\]
\end{corollary}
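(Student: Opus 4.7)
The plan is to split into two cases according to whether $0 \in \conv(A(V-U))$, which are precisely the complementary regimes handled by the two preceding lemmas, so the reduction amounts to a case analysis with very little residual work.

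In the first case, $0 \in \conv(A(V-U))$, Lemma~\ref{lem : topology of induced sub-complexes 01 } identifies $\CC_A^*[U]$ with the full simplex $\Delta_U$, which is contractible; hence $\widetilde H_i(\CC_A^*[U];M) = 0$ for every $i$ and every coefficient module $M$, and the complex is $n$-connected for every $n$. In the complementary case, $0 \notin \conv(A(V-U))$, Lemma~\ref{lem : topology of induced sub-complexes 03.5 } yields directly the required vanishing $\widetilde H_i(\CC_A^*[U];M) = 0$ in the range $i \leq |U|-\dim(\E)-2$. Merging the two cases delivers the homological claim unconditionally.

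For the concluding sentence asserting actual topological $(|U|-\dim(\E)-2)$-connectivity, my plan is to combine the preceding homological vanishing with the additional homotopy information furnished by Lemma~\ref{lem : topology of induced sub-complexes 03 }: the Alexander dual $(\CC_A^*[U])^*$ is homotopy equivalent to an open subset of the contractible $(\dim(\E)-1)$-dimensional region $\mathscr B(A(V-U))$. A standard Hurewicz-type bootstrap, starting from path-connectivity (which follows from the vanishing of $\widetilde H_0$ whenever $|U|-\dim(\E)-2 \geq 0$), upgrades the acyclic range to a genuinely connected range. The only point demanding care is this final promotion from homology to homotopy, since combinatorial Alexander duality in Theorem~\ref{th : combinatorial Alexander duality} is a priori purely (co)homological; everything else is a mechanical application of the previous two lemmas.
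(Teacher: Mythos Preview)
Your case split for the homological claim is exactly the paper's argument: apply Lemma~\ref{lem : topology of induced sub-complexes 01 } when $0\in\conv(A(V-U))$ and Lemma~\ref{lem : topology of induced sub-complexes 03.5 } otherwise; the paper then cites only the Hurewicz theorem for the connectivity upgrade, just as you do.

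One remark on the step you rightly flag as delicate. A Hurewicz bootstrap needs \emph{simple} connectivity as its starting hypothesis, not merely path-connectivity: from $\widetilde H_0=\widetilde H_1=0$ one can deduce only that $\pi_1$ is perfect, not that it vanishes. Your appeal to Lemma~\ref{lem : topology of induced sub-complexes 03 } gives homotopy information about the dual $(\CC_A^*[U])^*$, but combinatorial Alexander duality (Theorem~\ref{th : combinatorial Alexander duality}) transports this only (co)homologically and so does not directly hand you $\pi_1(\CC_A^*[U])=0$. The paper is equally terse on this point, and since only the homological part of the corollary feeds into the Nerve-theorem arguments of Section~\ref{sec : Covering scheme and proofs of colourful Caratheodory theorems}, you have reproduced the paper's argument faithfully --- including its one loose thread.
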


\medskip
The last claim of the corollary is a consequence of the first claim and the Hurewicz isomorphism theorem~\cite[Thm.\,VII.10.7]{Bredon2010}.

\medskip
We continue our analysis by considering the case when $0\in\interior(\conv (A(U)))$ next.

\medskip
\begin{lemma}
	\label{lem : topology of induced sub-complexes 04 }
	Let $V$ be a finite set and let $A\colon \Delta_V\longrightarrow\E$ be an affine map such that $A(V)\subseteq S(\E)$.
	If  $U\subseteq V$, $0\notin\conv(A(V-U))$ and $0\in\interior(\conv (A(U)))$, then for every integer $i$ and every coefficient $R$-module $M$ holds:
	\begin{equation}
		\label{eq : dual of dual 04}
		\widetilde{H}_i(\CC_A^*[U];M)\cong 0.
	\end{equation}
\end{lemma}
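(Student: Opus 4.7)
The plan is to show that the Alexander dual $(\CC_A^*[U])^*$ is contractible, and then invoke the combinatorial Alexander duality of Theorem~\ref{th : combinatorial Alexander duality} to deduce the vanishing of $\widetilde{H}_i(\CC_A^*[U];M)$ in all degrees. The hypothesis $0 \notin \conv(A(V-U))$ ensures that $U \notin \CC_A^*[U]$, so $\CC_A^*[U]$ is a proper sub-complex of $\Delta_U$ and Alexander duality on $U$ is indeed applicable.

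The contractibility of the dual is obtained in three short steps. First, since $0 \notin \conv(A(V-U))$, Lemma~\ref{lem : topology of induced sub-complexes 03 } yields the homotopy equivalence
\[
(\CC_A^*[U])^* \ \simeq \ \mathscr{B}\langle A(U)\rangle \cap \mathscr{B}(A(V-U)).
\]
Second, the hypothesis $0 \in \interior(\conv(A(U)))$ combined with Lemma~\ref{lem : covering of the sphere} gives $\mathscr{B}\langle A(U)\rangle = S(\E)$, so the intersection above collapses to $\mathscr{B}(A(V-U))$. Third, by Lemma~\ref{lem : non-empty condition} applied to $A(V-U)$, the set $\mathscr{B}(A(V-U))$ is non-empty; as a finite intersection of open hemispheres sharing a common point it is spherically (Robinson) convex, homeomorphic to an open $(\dim(\E)-1)$-dimensional ball, and in particular contractible. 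Hence $(\CC_A^*[U])^*$ is contractible and $\widetilde{H}^{j}((\CC_A^*[U])^*;M) = 0$ for every integer $j$.

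With contractibility in hand, Theorem~\ref{th : combinatorial Alexander duality} applied to the proper sub-complex $\CC_A^*[U] \subsetneq \Delta_U$ immediately produces
\[
\widetilde{H}_i(\CC_A^*[U];M) \ \cong \ \widetilde{H}^{|U|-3-i}((\CC_A^*[U])^*;M) \ = \ 0
\]
for every integer $i$ and every coefficient $R$-module $M$. The main potential obstacle is to ensure that the homotopy equivalence of the first step is available in the extreme case $\CC_A^*[U] = \partial\Delta_U$; however, this configuration would force $-A(u) \in \mathrm{cone}_+(A(V-U))$ for every $u \in U$, and since $0 \in \interior(\conv(A(U)))$ implies that $A(U)$ (and therefore $-A(U)$) positively spans $\E$, one would conclude that $A(V-U)$ positively spans $\E$ as well, contradicting $0 \notin \conv(A(V-U))$. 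Thus $\CC_A^*[U]$ is in fact a proper sub-complex of $\partial \Delta_U$, and the above argument applies without caveat.
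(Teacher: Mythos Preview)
Your proof is correct and follows the same route as the paper's: identify $(\CC_A^*[U])^*$ with $\mathscr{B}\langle A(U)\rangle\cap\mathscr{B}(A(V-U))$ via Lemma~\ref{lem : topology of induced sub-complexes 03 }, use Lemma~\ref{lem : covering of the sphere} to reduce this to $\mathscr{B}(A(V-U))\simeq\pt$, and then apply combinatorial Alexander duality. Your final paragraph ruling out the boundary case $\CC_A^*[U]=\partial\Delta_U$ is a welcome piece of extra care, though note it is already implicit in the argument: once you know the dual is homotopy equivalent to a non-empty contractible space, it cannot be the empty complex, and hence $\CC_A^*[U]\neq\partial\Delta_U$.
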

\begin{proof}
	Since $0\in\interior(\conv (A(U)))$ using Lemma~\ref{lem : covering of the sphere} we get that $\mathscr{B}\langle A(U)\rangle=S(\E)$.
	The assumption $0\notin\conv(A(V-U))$ allows us to apply Lemma~\ref{lem : topology of induced sub-complexes 04 } as follows:
	\[
		(\CC_A^*[U])^* \ \simeq \ \mathscr{B}\langle A(U)\rangle\cap \mathscr{B}(A(V-U)) \ =\ S(\E)\cap \mathscr{B}(A(V-U))\ =\ \mathscr{B}(A(V-U))\ \simeq \pt.
	\]
	Now, the Alexander duality isomorphism \eqref{iso : Alexander 01} completes the argument.
\end{proof}

\medskip
The next case partially overlaps with the situation from the previous Lemma~\ref{lem : topology of induced sub-complexes 04 }.

\medskip
\begin{lemma}
	\label{lem : topology of induced sub-complexes 05 }
	Let $V$ be a finite set and let $A\colon \Delta_V\longrightarrow\E$ be an affine map such that $A(V)\subseteq S(\E)$.
	If $U\subseteq V$, $0\notin\conv(A(V-U))$ and $\cone(A(V-U))\cap \cone (A(U))\neq \{0\}$, then for every integer $i$ and every coefficient $R$-module $M$ it holds that:
	\begin{equation}
		\label{eq : dual of dual 05}
		\widetilde{H}_i(\CC_A^*[U];M)\cong 0.
	\end{equation}
\end{lemma}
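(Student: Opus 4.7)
The plan is to reduce to the Alexander dual computation as in the preceding lemmas and show that the hypothesis forces $(\CC_A^*[U])^*$ to be contractible. Since $0\notin\conv(A(V-U))$, Lemma~\ref{lem : topology of induced sub-complexes 03 } gives
\[
(\CC_A^*[U])^*\ \simeq\ \mathscr{B}\langle A(U)\rangle\cap \mathscr{B}(A(V-U)).
\]
So it suffices to prove the inclusion $\mathscr{B}(A(V-U))\subseteq \mathscr{B}\langle A(U)\rangle$, since then the intersection equals $\mathscr{B}(A(V-U))$, which is a non-empty open spherically convex disk (non-empty by Lemma~\ref{lem : non-empty condition}, as $0\notin\conv(A(V-U))$), hence contractible.

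The key step is a small Gordan/Farkas-type calculation using the hypothesis $\cone(A(V-U))\cap\cone(A(U))\neq\{0\}$. Fix a nonzero $w\in\cone(A(V-U))\cap\cone(A(U))$ and write
\[
w=\sum_{v\in V-U}\lambda_v\,A(v)=\sum_{u\in U}\mu_u\,A(u),\qquad \lambda_v,\mu_u\geq 0.
\]
Since $w\neq 0$, at least one $\lambda_v$ is strictly positive. Now take any $y\in \mathscr{B}(A(V-U))$, so that $\langle y,A(v)\rangle>0$ for every $v\in V-U$. Pairing with the first expression for $w$ yields
\[
\langle y,w\rangle \ =\ \sum_{v\in V-U}\lambda_v\,\langle y,A(v)\rangle \ >\ 0,
\]
the strict inequality coming from the fact that some $\lambda_v>0$. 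If, toward a contradiction, $y\notin\mathscr{B}\langle A(U)\rangle$, then $\langle y,A(u)\rangle\leq 0$ for every $u\in U$, and pairing with the second expression for $w$ gives
\[
\langle y,w\rangle \ =\ \sum_{u\in U}\mu_u\,\langle y,A(u)\rangle \ \leq \ 0,
\]
contradicting the previous inequality. Thus $y\in \mathscr{B}\langle A(U)\rangle$ and $\mathscr{B}(A(V-U))\subseteq\mathscr{B}\langle A(U)\rangle$.

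Combining the two steps, $(\CC_A^*[U])^*\simeq\mathscr{B}(A(V-U))\simeq\pt$, so its reduced cohomology vanishes in all degrees. The combinatorial Alexander duality isomorphism \eqref{iso : Alexander 01} then gives
\[
\widetilde H_i(\CC_A^*[U];M)\ \cong\ \widetilde H^{|U|-3-i}((\CC_A^*[U])^*;M)\ =\ 0
\]
for every $i\in\Z$ and every coefficient module $M$, as required. The main obstacle is really just the separation argument in the middle paragraph; everything else is a direct application of tools already assembled in this section (Lemmas~\ref{lem : non-empty condition}, \ref{lem : topology of induced sub-complexes 03 } and Theorem~\ref{th : combinatorial Alexander duality}). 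One should also remark that if $\CC_A^*[U]=\partial\Delta_U$ its dual is empty and the conclusion is vacuously true, so no separate treatment is needed.
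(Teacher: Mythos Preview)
Your proof is correct and essentially identical to the paper's: both reduce via Lemma~\ref{lem : topology of induced sub-complexes 03 } to the inclusion $\mathscr{B}(A(V-U))\subseteq\mathscr{B}\langle A(U)\rangle$, verify it by the same pairing computation with a nonzero element of $\cone(A(V-U))\cap\cone(A(U))$, and finish with combinatorial Alexander duality. One minor slip in your closing remark: if $\CC_A^*[U]$ were equal to $\partial\Delta_U\cong S^{|U|-2}$ the conclusion would be \emph{false} (not vacuous) in degree $|U|-2$; however, under the hypotheses your inclusion argument already shows $(\CC_A^*[U])^*\simeq\mathscr{B}(A(V-U))\neq\emptyset$, so that case cannot occur and the proof stands.
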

\begin{proof}
	Recall that the assumption $0\notin\conv(A(V-U))$ allows us to apply Lemma~\ref{lem : topology of induced sub-complexes 03 } and get that $(\CC_A^*[U])^*  \simeq  \mathscr{B}\langle A(U)\rangle\cap \mathscr{B}(A(V-U))$.
	We will show that the assumption $\cone(A(V-U))\cap \cone (A(U))\neq \{0\}$ implies that $\mathscr{B}(A(V-U))\subseteq \mathscr{B}\langle A(U)\rangle$.
	Consequently, we will have that
	\[
		(\CC_A^*[U])^*  \simeq  \mathscr{B}\langle A(U)\rangle\cap \mathscr{B}(A(V-U)) =\mathscr{B}(A(V-U))\simeq\pt.
	\]
	Then the Alexander duality isomorphism \eqref{iso : Alexander 01} will complete the argument.

	\medskip
	Let $x\in \mathscr{B}(A(V-U))$, or equivalently $\langle x,A(v)\rangle>0$ for all $v\in V-U$.
	Choose a non-zero vector $z\in \cone(A(V-U))\cap \cone (A(U))\neq \{0\}$.
	Then
	\[
		z=\sum_{v\in V-U}t_v \cdot A(v)=\sum_{u\in U}s_u \cdot A(u)
	\]
	where $t_v\geq 0$, $s_u\geq 0$, and at least one $t_v$ and at least one $s_u$ are strictly positive.
	Next, we have that:
	\[
		\sum_{u\in U}s_u \,\langle x, A(u)\rangle=\langle x,z\rangle = \sum_{v\in V-U}t_v \,\langle x, A(v)\rangle >0.
	\]
	Hence, there is at least one $u\in U$ with the property that $s_u>0$ and $\langle x,A(u)\rangle>0$, and so $x\in \mathscr{B}_{A(u)}\subseteq \mathscr{B}\langle A(U)\rangle$.
	In this way we have verified that $\mathscr{B}(A(V-U))\subseteq \mathscr{B}\langle A(U)\rangle$ and consequently completed the proof of the lemma.
\end{proof}

\medskip
Let $V$ be a finite set,  let $A\colon \Delta_V\longrightarrow\E$ be an affine map such that $A(V)\subseteq S(\E)$, and let $U\subseteq V$.
For the rest of the section we make the following assumptions:
\begin{compactenum}
	\item[(A1)] 	$0\notin\conv(A(V-U))$,
	\item[(A2)] 	$0\notin\interior(\conv (A(U)))$,
	\item[(A3)] 	 $\cone(A(V-U))\cap \cone (A(U))= \{0\}$.
\end{compactenum}
Furthermore, we denote by \[L_U:=\cone(A(U))\cap\cone(-A(U))=\cone(A(U))\cap(-\cone(A(U)))\] the maximal linear subspace of $\E$ contained in $\cone(U)$; for an illustration see Figure~\ref{fig 022}.

\begin{figure}
	\includegraphics[scale=1]{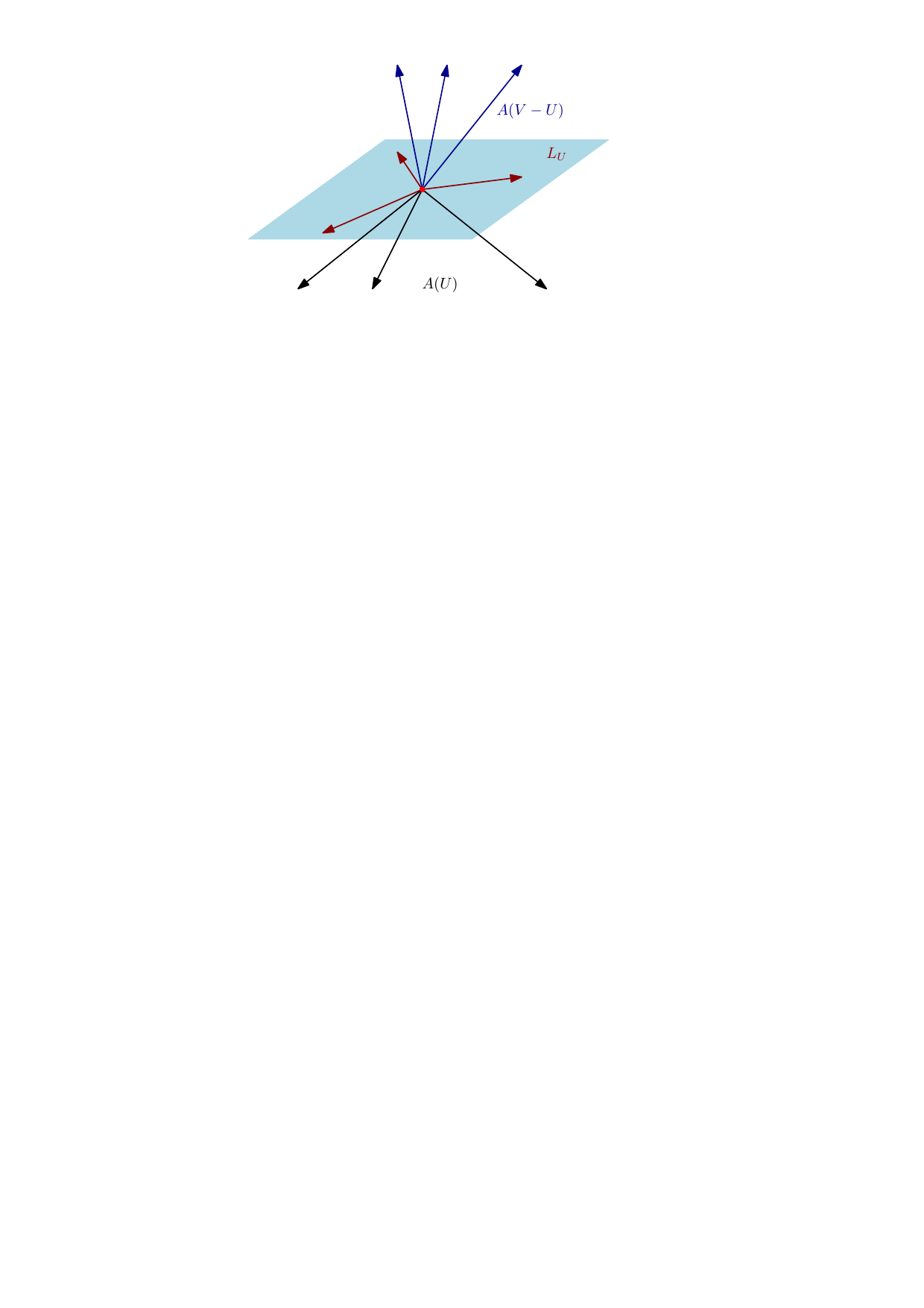}
	\caption{\small An illustration of the set $V$ when the assumptions (A1), (A2) and (A3) hold.}
	\label{fig 022}
\end{figure}

\medskip
In the next lemmas we relate the topology of $\CC_A^*[U]$ with the dimension of the  subspace $L_U$.

\medskip
\begin{lemma}
	\label{lem : topology of induced sub-complexes 06 }
	Let $V$ be a finite set, let $A\colon \Delta_V\longrightarrow\E$ be an affine map such that $A(V)\subseteq S(\E)$, and let $U\subseteq V$.
	If the assumptions {\rm (A1)}, {\rm (A2)} and {\rm (A3)} are satisfied and $\dim(L_U)=\dim(\E)-1$, then for every coefficient $R$-module $M$ it holds that:
	\begin{equation}
		\label{eq : dual of dual 06}
		\widetilde{H}_i(\CC_A^*[U];M)\cong
		\begin{cases}
			M, & i=|U|-\dim(\E)-1=|U|-\dim(L_U)-2,     \\
			0, & i\neq |U|-\dim(\E)-1=|U|-\dim(L_U)-2.
		\end{cases}
	\end{equation}
\end{lemma}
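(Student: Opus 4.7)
The plan is to compute the homotopy type of the Alexander dual $(\CC_A^*[U])^*$ and then invoke Theorem~\ref{th : combinatorial Alexander duality}. Write $d:=\dim(\E)$, $B_U:=\mathscr{B}\langle A(U)\rangle$ and $B_{V-U}:=\mathscr{B}(A(V-U))$. Assumption (A1) allows us to apply Lemma~\ref{lem : topology of induced sub-complexes 03 }, identifying $(\CC_A^*[U])^*$ up to homotopy with $B_U\cap B_{V-U}$. By (A1) and Lemma~\ref{lem : non-empty condition}, $B_{V-U}$ is a non-empty intersection of open hemispheres, and in particular is homeomorphic to an open $(d-1)$-ball.

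Because $\dim(L_U)=d-1$, the lineality space $L_U$ is a hyperplane; fix a unit normal $\nu\in S(\E)$. Since $L_U\subseteq\cone(A(U))$ is a convex cone and $\cone(A(U))\neq \E$ (using (A2)), elementary cone geometry leaves exactly two options: either $\cone(A(U))=L_U$ (the equatorial case) or $\cone(A(U))$ is one of the closed half-spaces bounded by $L_U$ (the half-space case). In the half-space case, (A3) immediately pushes $A(V-U)$ into the open hemisphere opposite to $\cone(A(U))$. In the equatorial case, (A3) together with (A1) rules out $A(V-U)$ straddling $L_U$: two points on opposite sides would either be antipodal, contradicting (A1), or yield a nonzero convex combination lying in $L_U\subseteq\cone(A(U))$, contradicting (A3). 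In both scenarios, after an appropriate sign choice of $\nu$ we may arrange that $\cone(A(U))\subseteq H^+:=\{y\in\E:\langle y,\nu\rangle\geq 0\}$ and $A(V-U)\subseteq\mathscr{B}_{-\nu}$.

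The heart of the argument is the identity $B_U\cap B_{V-U}=B_{V-U}\setminus\{-\nu\}$. Clearly $-\nu\in B_{V-U}$, while $-\nu\notin B_U$ because $\langle -\nu,A(u)\rangle=-\langle\nu,A(u)\rangle\leq 0$ for every $u\in U$. Conversely, if $x\in B_{V-U}\setminus B_U$ then $-x$ lies in the dual cone of $\cone(A(U))$, which in both cases is contained in the line $\R\nu$, so $x=\pm\nu$; but $\langle\nu,A(v)\rangle<0$ for $v\in V-U$ excludes $x=\nu$, leaving only $x=-\nu$. Removing the interior point $-\nu$ from the open $(d-1)$-ball $B_{V-U}$ gives a space homotopy equivalent to $S^{d-2}$, so $(\CC_A^*[U])^*\simeq S^{d-2}$. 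Alexander duality (Theorem~\ref{th : combinatorial Alexander duality}) then produces $\widetilde{H}_i(\CC_A^*[U];M)\cong\widetilde{H}^{|U|-3-i}(S^{d-2};M)$, which is $M$ precisely when $i=|U|-d-1=|U|-\dim(L_U)-2$ and vanishes otherwise, as claimed.

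The main obstacle is the unified treatment of the two possible shapes of $\cone(A(U))$; the key geometric observation that reconciles them is that the dual cone of $\cone(A(U))$ is contained in the single line $\R\nu$ in either case, which localizes the sole defect preventing $B_{V-U}\subseteq B_U$ to the one point $-\nu$ and thus produces the spherical homotopy type.
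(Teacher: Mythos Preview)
Your proof is correct and follows essentially the same route as the paper: reduce via Lemma~\ref{lem : topology of induced sub-complexes 03 } to the identity $\mathscr{B}\langle A(U)\rangle\cap\mathscr{B}(A(V-U))=\mathscr{B}(A(V-U))\setminus\{-\nu\}\simeq S^{\dim(\E)-2}$, and then apply combinatorial Alexander duality. The only cosmetic difference is that for the reverse inclusion the paper argues directly with a positive combination $\sum_{u\in U'}t_uA(u)=0$ over $U'=U\cap A^{-1}(L_U)$ (forcing some $\langle A(u'),y\rangle>0$ when $y\neq\pm\nu$), whereas you phrase the same step through the dual cone of $\cone(A(U))$ after splitting into the equatorial and half-space cases; both arguments encode the same geometric fact.
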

\begin{proof}
	The assumption (A1) implies that $\cone(A(V-U))$ has an apex.
	From the assumption (A3) it follows that the hyperplane $L_U$ separates the point sets $A(U)$ and $A(V-U)$.
	Chose $x\in S(\E)$ to be the unit normal of $L_U$ such that
	\[
		(\forall v\in V-U)\ \langle x,A(v)\rangle <0
		\qquad\text{and}\qquad
		(\forall u \in U)\ \langle x,A(u)\rangle \geq 0.
	\]
	Consequently, $-x\in \mathscr{B}(A(V-U))$ and $-x\notin \mathscr{B}\langle A(U)\rangle$.
	With this, we can show that
	\[
		\mathscr{B}\langle A(U)\rangle\cap \mathscr{B}(A(V-U))\ = \ \mathscr{B}(A(V-U))-\{-x\}.
	\]

	\medskip
	Indeed, first $\mathscr{B}\langle A(U)\rangle\cap \mathscr{B}(A(V-U))\subseteq  \mathscr{B}(A(V-U))-\{-x\}$ because $-x\notin \mathscr{B}\langle A(U)\rangle$.
	Conversely, take $y\in \mathscr{B}(A(V-U))-\{-x\}$.
	Then $y\neq -x$, and also $y\neq x$ because $-x\in \mathscr{B}(A(V-U))$ implies that $x\notin \mathscr{B}(A(V-U))$.
	Since $0\in L_U$ we have that
	\[
		\sum_{u\in U'} t_u\cdot A(u) = 0
	\]
	for some scalars $t_u>0$, where $U':=U\cap A^{-1}(L_U)\neq \emptyset$.
	Then,
	\[
		0=\langle 0, y\rangle = \sum_{u\in U'} t_u\cdot \langle A(u),y\rangle.
	\]
	Since $y\neq -x$ and $y\neq x$ there exists at least one $u'\in U'\subseteq U$ such that $\langle A(u'),y\rangle>0$.
	Thus, $y\in \mathscr{B}_{A(u')}\subseteq \mathscr{B}\langle A(U)\rangle$ and so $y\in  \mathscr{B}\langle A(U)\rangle\cap \mathscr{B}(A(V-U))$ as desired.

	\medskip
	Now, the assumption (A1) with Lemma~\ref{lem : topology of induced sub-complexes 03 } implies that
	\[
		(\CC_A^*[U])^* \ \simeq \ \mathscr{B}\langle A(U)\rangle\cap \mathscr{B}(A(V-U)) \ = \ \mathscr{B}(A(V-U))-\{-x\} \ \simeq \ S^{\dim(\E)-2}
	\]
	because $\mathscr{B}(A(V-U))$ is a $(\dim(\E)-1)$-dimensional open ball.
	The Alexander duality isomorphism \eqref{iso : Alexander 01} implies that
	\[
		\widetilde{H}_i(\CC_A^*[U];M)\ \cong  \ \widetilde{H}^{|U|-3-i}((\CC_A^*[U])^* ;M)
		\ \cong  \ \widetilde{H}^{|U|-3-i}(S^{\dim(\E)-2} ;M),
	\]
	as claimed.
\end{proof}

\medskip
Now we consider a different  geometric situation of the subset $U\subseteq V$.

\medskip
\begin{lemma}
	\label{lem : topology of induced sub-complexes 07 }
	Let $V$ be a finite set, let $A\colon \Delta_V\longrightarrow\E$ be an affine map such that $A(V)\subseteq S(\E)$, and let $U\subseteq V$.
	If the assumptions {\rm (A1)}, {\rm (A2)} and {\rm (A3)} are satisfied and additionally $0\in\relint(\conv (A(U)))$, then for every coefficient $R$-module $M$ we have:
	\begin{equation}
		\label{eq : dual of dual 07}
		\widetilde{H}_i(\CC_A^*[U];M)\cong
		\begin{cases}
			M, & i=|U|-\dim(L_U)-2,     \\
			0, & i\neq |U|-\dim(L_U)-2.
		\end{cases}
	\end{equation}
\end{lemma}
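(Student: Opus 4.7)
The plan is to mimic the proof of Lemma~\ref{lem : topology of induced sub-complexes 06 }, extending it from the codimension-one setting $\dim(L_U)=\dim(\E)-1$ to arbitrary $\dim(L_U)$. Since {\rm (A1)} holds, Lemma~\ref{lem : topology of induced sub-complexes 03 } yields
\[
(\CC_A^*[U])^* \ \simeq \ \mathscr{B}\langle A(U)\rangle\cap\mathscr{B}(A(V-U)).
\]
The added hypothesis $0\in\relint(\conv(A(U)))$ means that $A(U)$ positively spans $W:=\spann(A(U))$, so $\cone(A(U))=W$ and hence $L_U=W$. Applying Lemma~\ref{lem :  case 01} to $A(U)$ then identifies $\mathscr{B}\langle A(U)\rangle=S(\E)-S(W^\perp)$, reducing the task to computing the homotopy type of
\[
X\ :=\ \mathscr{B}(A(V-U))-S(W^\perp).
\]

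Let $\pi\colon\E\to W^\perp$ denote the orthogonal projection. Using {\rm (A1)} and {\rm (A3)} I would verify: (i) $\pi(A(v))\neq 0$ for every $v\in V-U$, since otherwise $A(v)\in W=\cone(A(U))$ would force $A(v)\in\cone(A(V-U))\cap\cone(A(U))=\{0\}$, contradicting $A(v)\in S(\E)$; and (ii) $0\notin\conv\{\pi(A(v))/\|\pi(A(v))\|:v\in V-U\}$, since a non-trivial non-negative relation among the $\pi(A(v))$ summing to $0$ would, after re-assembling the $W$-components, produce $\sum_v t_v A(v)\in\cone(A(V-U))\cap W=\{0\}$ by {\rm (A3)}, which after rescaling would contradict {\rm (A1)}. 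Lemma~\ref{lem : non-empty condition} applied inside $S(W^\perp)$ then shows that
\[
C\ :=\ \mathscr{B}(A(V-U))\cap S(W^\perp)
\]
is a non-empty spherically convex open region of $S(W^\perp)$, homeomorphic to an open $(\dim(W^\perp)-1)$-ball.

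The homotopy-type computation $X\simeq S^{\dim(L_U)-1}$ is the core step. Fix $p\in C$ and consider the exponential map $\exp_p\colon T_pS(\E)\to S(\E)-\{-p\}$. Since $S(W^\perp)$ is totally geodesic in $S(\E)$, we have $\exp_p^{-1}(S(W^\perp))=T_pS(W^\perp)$, a linear subspace of codimension $\dim(L_U)$ in $T_pS(\E)\cong\R^{\dim(\E)-1}$. Because $\mathscr{B}(A(V-U))$ is spherically convex and antipode-free (each of its defining open hemispheres is), the preimage $\Omega:=\exp_p^{-1}(\mathscr{B}(A(V-U)))$ is an open star-shaped-at-$0$ neighborhood of $0$ in $T_pS(\E)$, with $\exp_p^{-1}(C)=\Omega\cap T_pS(W^\perp)$. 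A Mayer--Vietoris argument with a tubular neighborhood of $\Omega\cap T_pS(W^\perp)$ in $\Omega$ (possible because the normal bundle of the linear subspace $T_pS(W^\perp)\subset T_pS(\E)$ is trivial) then produces
\[
X\ \simeq\ \Omega-T_pS(W^\perp)\ \simeq\ \R^{\dim(\E)-1}-\R^{\dim(W^\perp)-1}\ \simeq\ S^{\dim(L_U)-1}.
\]
Finally, Alexander duality \eqref{iso : Alexander 01} closes the argument:
\[
\widetilde{H}_i(\CC_A^*[U];M)\ \cong\ \widetilde{H}^{|U|-3-i}(X;M)\ \cong\ \widetilde{H}^{|U|-3-i}(S^{\dim(L_U)-1};M),
\]
which equals $M$ exactly when $i=|U|-\dim(L_U)-2$ and vanishes otherwise, matching \eqref{eq : dual of dual 07}. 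I expect the principal obstacle to be this homotopy identification: one must certify that $C$ sits ``flatly enough'' in $\mathscr{B}(A(V-U))$ for its complement to recover the correct sphere. The exponential-map trick bypasses ad hoc spherical geometry by reducing the problem to the standard fact that $\R^a$ minus a linear $\R^b$-subspace deformation retracts onto $S^{a-b-1}$.
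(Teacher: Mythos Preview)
Your proof is correct and follows essentially the same route as the paper: reduce via Lemma~\ref{lem : topology of induced sub-complexes 03 } and Lemma~\ref{lem :  case 01} to computing the homotopy type of $\mathscr{B}(A(V-U))-S(W^\perp)$, identify this with $\R^{\dim(\E)-1}-\R^{\dim(\E)-\dim(L_U)-1}\simeq S^{\dim(L_U)-1}$, and finish with combinatorial Alexander duality. The paper asserts the homotopy identification in a single line; your exponential-map linearization and your verification (via (A1) and (A3)) that $C=\mathscr{B}(A(V-U))\cap S(W^\perp)$ is non-empty supply the details that the paper leaves implicit.
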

\begin{proof}
	As it the previous proofs, the assumption (A1) with Lemma~\ref{lem : topology of induced sub-complexes 03 } implies that
	$
		(\CC_A^*[U])^* \ \simeq \ \mathscr{B}\langle A(U)\rangle\cap \mathscr{B}(A(V-U))
	$.

	\medskip
	It follows from the assumption $0\in\relint(\conv (A(U)))$ that $L_U=\spann(A(U))$.
	Furthermore, using Lemma~\ref{lem :  case 01}, we get that
	$
		S(\E)=\mathscr{B}\langle A(U)\rangle \cup S
	$,
	where
	\[
		S=\{y\in S(\E) : (\forall u\in U)\ \langle y,A(u) \rangle=0 \}
	\]
	is an equatorial sphere defined by ${\spann(A(U))}^{\perp}$.

	\medskip
	Combining these two facts together we obtain
	\begin{multline*}
		(\CC_A^*[U])^* \ \simeq \ \mathscr{B}\langle A(U)\rangle\cap \mathscr{B}(A(V-U))= \\ \big(S(\E)- S\big)\cap \mathscr{B}(A(V-U)) \ = \ \mathscr{B}(A(V-U)) - \big(  \mathscr{B}(A(V-U)) \cap S\big).
	\end{multline*}
	Since $S$ is an equatorial sphere defined by ${\spann(A(U))}^{\perp}$ and $\mathscr{B}(A(V-U))$ is an open $(\dim(\E)-1)$-dimensional ball we conclude that
	\[
		(\CC_A^*[U])^* \ \simeq \ \R^{\dim(\E)-1} - \R^{\dim(\E)-\dim(L_U)-1}  \ \simeq \ S( \R^{\dim(L_U)-1})=S^{\dim(L_U)-2}.
	\]
	Like the proofs of the previous lemmas the Alexander duality isomorphism \eqref{iso : Alexander 01} implies \eqref{eq : dual of dual 07} and completes the proof.
\end{proof}

\medskip
The final lemma completes our description of the topology of the induced sub-complexes of the Alexander dual of the zero-avoiding complex $\CC_A$.
For that let $V$ be a finite set, let $A\colon \Delta_V\longrightarrow\E$ be an affine map such that $A(V)\subseteq S(\E)$, and let $U\subseteq V$.
Set
\[
	N(V,U):=\mathscr{B}(A(V-U))-\mathscr{B}\langle A(U)\rangle \, \subseteq \, \mathscr{B}(A(V-U)) \, \subseteq \, S(\E).
\]
Note that $N(V,U)$ is a closed subset of $\mathscr{B}(A(V-U))$, where $\mathscr{B}(A(V-U))$ is an open $(\dim(\E)-1)$-dimensional disk and also an open subset of the sphere $S(\E)$.

\medskip
\begin{lemma}
	\label{lem : topology of induced sub-complexes 08 }
	Let $V$ be a finite set, let $A\colon \Delta_V\longrightarrow\E$ be an affine map such that $A(V)\subseteq S(\E)$, and let $U\subseteq V$.
	Take $M$ to be a coefficient $R$-module.
	Assume that {\rm (A1)}, {\rm (A2)} and {\rm (A3)} are satisfied and also assume that
	\begin{compactenum}
		\item[\rm (A4)] 	$0\notin\relint(\conv (A(U)))$.
	\end{compactenum}
	If in addition
	\begin{compactenum}[\rm \quad (1)]
		\item  $N(V,U)$ is compact then
		\begin{equation}
			\label{eq : dual of dual 08 - 1}
			\widetilde{H}^i(\CC_A^*[U];M)\cong
			\begin{cases}
				M, & i=|U|-\dim(\E)-1,     \\
				0, & i\neq |U|-\dim(\E)-1,
			\end{cases}
		\end{equation}
		\item $N(V,U)$ is not compact and $i\leq |U|-\dim(\E)-1$ or $i\geq |U|-\dim(L_U)-1$, then
		\begin{equation}
			\label{eq : dual of dual 08 - 2}
			\widetilde{H}^i(\CC_A^*[U];M)= 0.
		\end{equation}
	\end{compactenum}
\end{lemma}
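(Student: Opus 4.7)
\emph{Proof plan.} The starting point is Lemma~\ref{lem : topology of induced sub-complexes 03 }, which under (A1) identifies $(\CC_A^*[U])^*$ with the open set
\[
W \ :=\ \mathscr{B}\langle A(U)\rangle\cap\mathscr{B}(A(V-U))\ =\ B - N(V,U),
\]
where $B := \mathscr{B}(A(V-U))$ is an open $(d-1)$-ball, $d := \dim\E$. Combinatorial Alexander duality (Theorem~\ref{th : combinatorial Alexander duality}) then shifts the question to a computation of $\widetilde{H}_{|U|-3-i}(W;M)$. The geometric input common to both cases is that assumption~(A4) forces $A(U)\cap(-A(U))=\emptyset$: indeed, $A(u)=-A(u')$ with $u,u'\in U$ would put $0$ in the relative interior of the segment $[A(u),A(u')]\subseteq\conv A(U)$, contradicting (A4). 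By Lemma~\ref{lem : Robinson convex}, $\mathscr{D}(-A(U))$ is then Robinson convex and contractible, so $N = B\cap\mathscr{D}(-A(U))$ is Robinson convex, and contractible whenever non-empty.

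For Case~(1) the compact, spherically convex, contractible set $N$ sits inside the open disk $B$. Surrounding $N$ by a compact spherically convex neighbourhood homeomorphic to a closed $(d-1)$-ball, one concludes $B - N \simeq B - \{\mathrm{pt}\} \simeq S^{d-2}$, and the Alexander-duality translation above concentrates the cohomology of $\CC_A^*[U]$ in degree $|U|-d-1$, as claimed.

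For Case~(2) the argument splits along the two ranges. For $i\le |U|-d-1$, equivalently $j := |U|-3-i \ge d-2$, I would use that $W$ is an open subset of the non-compact $(d-1)$-manifold $B\cong\R^{d-1}$, which kills $\widetilde{H}_{d-1}(W;M)$ for free, and that the closed set $N\subseteq B$, being connected and non-compact, provides no bounded component of $B\setminus W$, so $\widetilde{H}_{d-2}(W;M) = 0$ as well by Alexander duality in the one-point compactification of $B$. For $i\ge |U|-\dim(L_U)-1$, equivalently $j\le \dim(L_U)-2$, the claim is that $W$ is $(\dim(L_U)-2)$-connected, which I would establish by exhibiting a deformation retraction of $W$ onto a subspace of $B$ of codimension $\dim(L_U)$: the non-compactness of $N$ is forced by the flat directions of $\cone A(U)$, and a radial retraction along $L_U^\perp$ in $S(\E)$ should push $W$ onto a suitably thin shell of the right dimension. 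The main obstacle is exactly this last step, where the geometry by which $N$ escapes to $\partial B$ through the vectors $A(u)\in L_U$ has to be matched with the connectivity estimate controlled by $\dim(L_U)$; by contrast, Case~(1) is the degenerate instance of this picture in which no such escape occurs and the analysis is routine.
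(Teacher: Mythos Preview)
Your central geometric claim is false: assumption~(A4), namely $0\notin\relint(\conv A(U))$, does \emph{not} force $A(U)\cap(-A(U))=\emptyset$. If $A(u)=-A(u')$ then $0$ lies in the relative interior of the segment $[A(u),A(u')]$, but that segment sits on the \emph{boundary} of $\conv A(U)$ in general, not in its relative interior. A concrete counterexample is $A(U)=\{e_1,-e_1,e_2\}\subset S(\R^2)$: here (A4) holds while $e_1$ and $-e_1$ are antipodal. Consequently your appeal to Lemma~\ref{lem : Robinson convex} to make $\mathscr{D}(-A(U))$ contractible, and with it your contractibility of $N=B\cap\mathscr{D}(-A(U))$, is unjustified. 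In Case~(1) you are accidentally rescued, since compactness of $N$ inside the open ball $B$ already forces $N$ into an open hemisphere and hence strong convexity; but in Case~(2) the error is fatal, because the structure of $N$ when antipodal directions are present is precisely what governs the $\dim(L_U)$ bound.

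The second, more serious, gap is that you never establish the one structural fact that drives the high-range vanishing $i\ge |U|-\dim(L_U)-1$: namely that
\[
N(V,U)\ \subseteq\ L_U^{\perp}.
\]
The paper obtains this by splitting $U$ into $U':=U\cap A^{-1}(L_U)$ and $U\setminus U'$ and checking that any $x\in N$ is orthogonal to every $A(u)$ with $u\in U'$; this yields the explicit description
\[
N(V,U)=\mathscr{B}(A(V-U))\cap\mathscr{D}\bigl(\{-A(u):u\in U\setminus U'\}\bigr)\cap L_U^{\perp},
\]
so $N$ is a closed, strongly convex (hence contractible), full-dimensional subset of $\mathscr{B}(A(V-U))\cap L_U^{\perp}\cong\R^{\dim(L_U^{\perp})-1}$. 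With this in hand, Case~(2) is handled uniformly via Dold's Alexander duality for closed non-compact subsets of Euclidean space,
\[
\widetilde H_j\bigl((\CC_A^*[U])^*;M\bigr)\cong\widetilde H^{\dim\E-2-j}\bigl(\widehat{N(V,U)};M\bigr),
\]
and the cohomology of the one-point compactification $\widehat{N}$ vanishes outside the range $1\le\dim\E-2-j\le\dim(L_U^{\perp})-2$, giving both endpoints at once. Your proposed ``radial retraction along $L_U^{\perp}$'' is gesturing toward this same dimensional constraint but cannot succeed without first knowing that $N$ actually lies in $L_U^{\perp}$; you also overlook the degenerate sub-case $N=B$, where $(\CC_A^*[U])^*=\emptyset$ and $\CC_A^*[U]=\partial\Delta_U\cong S^{|U|-2}$, which must be treated separately.
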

\begin{proof}
	Recall that the assumption (A1) in combination with Lemma~\ref{lem : topology of induced sub-complexes 03 } implies that
	\[
		(\CC_A^*[U])^*  \simeq \mathscr{B}\langle A(U)\rangle\cap \mathscr{B}(A(V-U)) .
	\]
	Further,  $N(V,U)\neq\emptyset$ by assumption (A3).
	From the definition of $N(V,U)$ we obtain the partition
	\[
		N(V,U)\cup\big( \mathscr{B}\langle A(U)\rangle\cap \mathscr{B}(A(V-U))\big) \ = \ \mathscr{B}(A(V-U)) \ \cong \ \R^{\dim(\E)-1},
	\]
	where
	\[
		N(V,U)\cap \big( \mathscr{B}\langle A(U)\rangle\cap \mathscr{B}(A(V-U))\big)=\emptyset.
	\]

	\medskip
	Set $U':=U\cap A^{-1}(L_U)$ and note that $L_U=\cone(A(U'))$.
	Observe also that $U'$ can be empty which in particular implies that $L_U=\{0\}$.
	The assumption (A4), that is $0\notin\relint(\conv (A(U)))$, implies, in particular, that $U-U'\neq\emptyset$.

	\medskip
	Now, we have the equality
	\begin{equation}
		\label{eq : description of N}
		N(V,U)=  \mathscr{B}(A(V-U)) \cap \mathscr{D}(\{-A(u) : u\in U-U'\})\cap L_U^{\perp}.
	\end{equation}
	Indeed, $x\in N(V,U)=\mathscr{B}(A(V-U))-\mathscr{B}\langle A(U)\rangle \, \subseteq \, \mathscr{B}(A(V-U))$ if and only if
	\[
		(\forall u\in V-U)\, \langle x,A(u)\rangle >0\qquad\text{and}\qquad (\forall u\in U)\, \langle x,A(u)\rangle \leq 0.
	\]
	Furthermore, in the case when $U'\neq\emptyset$, it follows from $L_U=\cone(A(U'))$ that there exist positive real numbers $t_{u}>0$ for $u\in U'$ such that $\sum_{u\in U'}t_u\cdot A(u)=0$.
	Hence, from
	\[
		0=\langle x,0\rangle = \langle x, \sum_{u\in U'}t_u\cdot A(u)\rangle=\sum_{u\in U'}\langle x,  t_u\cdot A(u)\rangle =t_u\cdot\sum_{u\in U'}\langle x,   A(u)\rangle
	\]
	we can conclude that $\langle x, A(u)\rangle=0$ for all $u\in U'$.
	Thus, the quality \eqref{eq : description of N} holds.

	\medskip
	Note that here the sets $A(V-U)$ and $\{-A(u) : u\in U-U'\}$ belong to at least one open hemisphere of $S(\E)$, according to the assumption (A3).
	Furthermore, the sets $A(V-U)$ and $\{-A(u) : u\in U\}$ belong to at least one closed hemisphere of the sphere $S(\E)$.
	Consequently, $N(V,U)$ is strongly convex and so contractible.
	Here strongly convex means that $N(V,U)$ contains no antipodal points, and additionally for every two points of $N(V,U)$ the small arc of the great circle determined by them is contained in $N(V,U)$.
	For the original definition and basic properties consult~\cite[Sec.\,9.1.\,(S)]{DanzerEtAl1963}.
	In modern terminology strong convexity is typically called geodesic convexity.

	\medskip
	{\bf (1)}
	Assume that $N(V,U)\subseteq \mathscr{B}(A(V-U)) \cong   \R^{\dim(\E)-1}$ is compact.
	Then, using Lemma~\ref{lem : topology of induced sub-complexes 03 } and the Alexander duality isomorphism~\cite[Cor.\,VI.8.6]{Bredon2010}, we get that
	\begin{multline*}
		\widetilde{H}_j ((\CC_A^*[U])^*;M)  \ \cong \
		\widetilde{H}_j( \mathscr{B}\langle A(U)\rangle\cap \mathscr{B}(A(V-U));M) \ \cong  \ \\
		\widetilde{H}_j( \mathscr{B}(A(V-U))-N(V,U);M)   \  \cong \ H^{\dim(\E)-2-j}(N(V,U);M)\  \cong   \\ H^{\dim(\E)-2-j}(\pt;M)	.
	\end{multline*}
	Now the Alexander duality isomorphism from Theorem~\ref{th : combinatorial Alexander duality} yields
	\[
		\widetilde{H}^{|U|-3-j}	(\CC_A^*[U];M)\ \cong \ \widetilde{H}_j ((\CC_A^*[U])^*;M) \ \cong \  H^{\dim(\E)-2-j}(\pt;M).
	\]
	Hence,
	$
		\widetilde{H}^{i}	(\CC_A^*[U])\cong   H^{\dim(\E)+1+i-|U|}(\pt;M)
	$,
	as claimed.

	\medskip
	{\bf (2A)}
	Suppose $N(V,U)=\mathscr{B}(A(V-U))$, and so non-compact.
	Then according to the definition of $N(V,U)$ we have that $\mathscr{B}(A(V-U))\cap\mathscr{B}\langle A(U)\rangle=\emptyset$, implying that the dual $(\CC_A^*[U])^*$ is the empty simplicial complex.
	Consequently, $\CC_A^*[U]=\partial\Delta_U\cong S^{|U|-2}$.
	Thus, we have shown case (2) of the lemma for this special case.

	\medskip
	{\bf (2B)}
	Now, assume that $N(V,U)\subsetneq \mathscr{B}(A(V-U)) \cong   \R^{\dim(\E)-1}$ is a proper, closed, but
	not compact subset of $\mathscr{B}(A(V-U))$.
	Hence, we can use a more general version of Alexander duality~\cite[(8.18),\,p.\,301]{Dold1995} which with  Lemma~\ref{lem : topology of induced sub-complexes 03 } gives the following isomorphisms:
	\begin{multline*}
		\widetilde{H}_j ((\CC_A^*[U])^*;M)  \ \cong \
		\widetilde{H}_j( \mathscr{B}\langle A(U)\rangle\cap \mathscr{B}(A(V-U));M) \ \cong \\
		\widetilde{H}_j( \mathscr{B}(A(V-U))-N(V,U);M) \ \cong \
		\widetilde{H}^{\dim(\E)-2-j}(\widehat{N(V,U)};M),
	\end{multline*}
	where $\widehat{N(V,U)}$ denotes the one-point compactification of $N(V,U)$.
	According to the description \eqref{eq : description of N} we have that $N(V,U)$  is  homeomorphic to a full-dimensional closed convex, but not compact, proper subset of $ \mathscr{B}(A(V-U)) \cap L_U^{\perp}\cong\R^{\dim (L_U^{\perp})-1}$.
	Consequently,
	\[
		\widetilde{H}^{\dim(\E)-2-j}(\widehat{N(V,U)};M)\cong 0
	\]
	for $\dim(\E)-2-j\leq 0$ or for $\dim(\E)-2-j\geq \dim (L_U^{\perp})-1$.
	In other words, for  $j\leq \dim(L_U)-1$ or for $\dim(\E)-2\leq j$.
	Hence, the isomorphism $\widetilde{H}^{|U|-3-j}	(\CC_A^*[U];M) \cong \widetilde{H}^{\dim(\E)-2-j}(\widehat{N(V,U)};M)$, in combination with Corollary~\ref{cor : topology of induced sub-complexes 03.5 }, implies the claim \eqref{eq : dual of dual 08 - 2}.
\end{proof}

\medskip
It is important to see that the Universal coefficient theorem,~\cite[Cor.\,V.7.2]{Bredon2010} or~\cite[Cor.\,53.6]{Munkres1984}, used with appropriate coefficients, implies a similar result for the homology as recorded in the following corollary.

\medskip
\begin{corollary}
	\label{cor : topology of induced sub-complexes 09 }
	Let $V$ be a finite set, let $A\colon \Delta_V\longrightarrow\E$ be an affine map such that $A(V)\subseteq S(\E)$, and let $U\subseteq V$.
	Suppose that $\FF$ is an arbitrary field.
	Assume that {\rm (A1)}, {\rm (A2)}, {\rm (A3)} and  {\rm (A4)} are satisfied.
	If additionally
	\begin{compactenum}[\rm \quad (1)]
		\item  $N(V,U)$ is compact then
		\begin{equation}
			\label{eq : dual of dual 08 - 11}
			\widetilde{H}_i(\CC_A^*[U];\FF)\cong
			\begin{cases}
				\FF, & i=|U|-\dim(\E)-1,     \\
				0,   & i\neq |U|-\dim(\E)-1,
			\end{cases}
		\end{equation}

		\item $N(V,U)$ is not compact and $i\leq |U|-\dim(\E)-1$ or $i\geq |U|-\dim(L_U)-1$, then
		\begin{equation}
			\label{eq : dual of dual 08 - 21}
			\widetilde{H}_i(\CC_A^*[U];\FF)= 0.
		\end{equation}
	\end{compactenum}
\end{corollary}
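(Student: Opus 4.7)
The plan is to obtain the homology statements directly from the cohomology statements of Lemma \ref{lem : topology of induced sub-complexes 08 } via the Universal Coefficient Theorem applied with field coefficients. Since the vertex set $V$ is finite, the simplicial complex $\CC_A^*[U]$ has only finitely many simplices, so its simplicial chain complex $C_*(\CC_A^*[U];\Z)$ is finitely generated and free in every degree, and in particular all integral homology groups $\widetilde{H}_n(\CC_A^*[U];\Z)$ are finitely generated abelian groups.

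For any field $\FF$ the Universal Coefficient Theorem for cohomology (\cite[Cor.\,V.7.2]{Bredon2010} or \cite[Cor.\,53.6]{Munkres1984}) gives a short exact sequence
\[
0 \longrightarrow \mathrm{Ext}^{1}_{\Z}\bigl(\widetilde{H}_{i-1}(\CC_A^*[U];\Z),\FF\bigr)
\longrightarrow \widetilde{H}^{i}(\CC_A^*[U];\FF)
\longrightarrow \mathrm{Hom}_{\Z}\bigl(\widetilde{H}_{i}(\CC_A^*[U];\Z),\FF\bigr) \longrightarrow 0,
\]
and a completely analogous short exact sequence (with $\mathrm{Tor}_1^{\Z}(\widetilde{H}_{i-1},\FF)$ in place of $\mathrm{Ext}^1_{\Z}(\widetilde{H}_{i-1},\FF)$) expresses $\widetilde{H}_i(\CC_A^*[U];\FF)$ in terms of the integral homology. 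Using the structure theorem for finitely generated abelian groups one checks summand by summand that, over any field $\FF$, the Ext and Tor contributions have equal $\FF$-dimensions, and so do the Hom and tensor contributions. This yields the dimension equality
\[
\dim_{\FF}\widetilde{H}_i(\CC_A^*[U];\FF) \;=\; \dim_{\FF}\widetilde{H}^{i}(\CC_A^*[U];\FF) \qquad\text{for every integer } i,
\]
so that the homology group $\widetilde{H}_i(\CC_A^*[U];\FF)$ is isomorphic, as an $\FF$-vector space, to $\widetilde{H}^{i}(\CC_A^*[U];\FF)$.

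With this observation in hand, both parts of the corollary fall out by substituting $M = \FF$ in Lemma \ref{lem : topology of induced sub-complexes 08 }. In case (1), the cohomology $\widetilde{H}^{i}(\CC_A^*[U];\FF)$ is isomorphic to $\FF$ when $i = |U|-\dim(\E)-1$ and vanishes otherwise, so the dimension equality forces \eqref{eq : dual of dual 08 - 11}; in case (2), the vanishing of $\widetilde{H}^{i}(\CC_A^*[U];\FF)$ in the specified range forces \eqref{eq : dual of dual 08 - 21}. All genuine topological and geometric work has been done in Lemma \ref{lem : topology of induced sub-complexes 08 }, so the only step here is the passage from cohomology to homology; hence there is no real obstacle, and the argument is a routine invocation of the Universal Coefficient Theorem with field coefficients.
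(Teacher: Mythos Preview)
Your proposal is correct and follows essentially the same approach as the paper, which simply remarks that the corollary follows from Lemma~\ref{lem : topology of induced sub-complexes 08 } via the Universal Coefficient Theorem with appropriate (field) coefficients. Your write-up merely spells out in more detail why, over a field $\FF$ and for a finite simplicial complex, $\dim_{\FF}\widetilde{H}_i=\dim_{\FF}\widetilde{H}^i$; a slightly shorter route would be to note directly that over a field one has $\widetilde{H}^i(X;\FF)\cong\mathrm{Hom}_{\FF}(\widetilde{H}_i(X;\FF),\FF)$, which for finite-dimensional $\FF$-vector spaces gives the isomorphism immediately.
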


\bigskip
\section{Covering scheme and proofs of colourful Carath\'eodory theorems}
\label{sec : Covering scheme and proofs of colourful Caratheodory theorems}
\bigskip

In this section we first develop the so-called ``covering scheme'', Lemma~\ref{lem : affine CS/TM scheme 02 caratheodory}, which offers an algorithm for proving different instances of
Problem~\ref{prob : general_colourful_Caratheodory theorem}:
\begin{quote}
	For integers $N\geq 1$ and $d\geq 1$, find all simplicial sub-complexes $\mathcal{K}$ of the simplex $\Delta_N$, with $\vertex(\mathcal{K})=\vertex(\Delta_N)$, and all the families $\mathcal{S}$ of faces of $\Delta_N$ such that the following implication holds: If an affine map $A\colon\Delta_N\longrightarrow\R^d$ satisfies that $0\in A(S)$ for every face $S\in \mathcal{S}$, then there exists a face $J$ of $\mathcal{K}$ with $0\in A(J)$.
\end{quote}
Then, using the covering scheme we give essentially identical proofs to all Theorems \ref{th : colourful_Caratheodory_01}, \ref{th : colourful_Caratheodory_02}, \ref{th : colourful_Caratheodory_03}, and \ref{th : main_result_2}.

\medskip
An additional ingredient of all these proofs will be the homological Nerve theorem of Meshulam \cite[Thm.\,2.1]{Meshulam2001}:
\begin{quote}
	{\bf Theorem 2.1.}  Let $X$ be a connected finite simplicial complex and let $\U:=(X_i : i\in I)$ be a cover of $X$ by  sub-complexes. Additionally, assume that for every non-empty $t$-fold intersection $X_{i_1}\cap \cdots\cap X_{i_t}$ and every $0\leq j\leq k-t+1$ it holds that
	$
		\widetilde{H}_j(X_{i_1}\cap \cdots\cap X_{i_t};\FF)=0
	$,
	where $t\geq 1$. Then for arbitrary field $\FF$ coefficients it holds that
	\begin{compactenum}[\rm \quad (1)]
		\item $\widetilde{H}_j(X;\FF)\cong \widetilde{H}_j(\NN_{\U};\FF)$ for all $0\leq j\leq k$, and
		\item if $\widetilde{H}_{k+1}(\NN_{\U};\FF)\neq 0$, then $\widetilde{H}_{k+1}(X;\FF)\neq 0$.
	\end{compactenum}
\end{quote}

\medskip
For simplicity's sake in the following we say that for $t\geq -1$ a simplicial complex $\K$ is {\em homologically $t$-connected}, with respect to the coefficients in a field $\FF$, if $\widetilde{H}_i(\K;\FF)=0$ for all $-1\leq i\leq k$.

\medskip
To apply the homological Nerve theorem we will have to estimate the homological connectivity of joins of sub-complexes.
For that we will use the K\"unneth formula for joins with coefficients in a field $\FF$ which says that:
\begin{equation}\label{eq:kunneth_for_join}
	\widetilde{H}_k(A*B;\FF)\cong \bigoplus_{a+b=k-1} \widetilde{H}_a(A;\FF)\otimes_{\FF} \widetilde{H}_b(B;\FF),
\end{equation}
where $A$ and $B$ are, for example, finite simplicial complexes.
In particular, we have that if $A$ is homologically $t$-connected and $B$ is homologically $s$-connected, then $A*B$ is homologically $(t+s+2)$-connected.

\medskip
\subsection{Transition into a topological question}
Let $N\geq 1$ and $d\geq 1$ be integers, let $V:=\vertex(\Delta_N)=\{p_0,\dots,p_N\}$, $\mathcal{S}$ a family of faces of the simplex $\Delta_N$ and $\mathcal{K}$ a simplicial complex with $V=\vertex(\mathcal{K})$.
Fix an affine map $A\colon\Delta_N\longrightarrow\R^d$ which has the property that $0\in A(S)$ for every $S\in \mathcal{S}$.

\medskip
To the fixed affine map $A$ we associate two simplicial complexes
\[
	X:=X(A,\mathcal{K}):=\CC_A\qquad\text{and}\qquad Y:=Y(A,\mathcal{K}):=\mathcal{K}.
\]
Here $\CC_A:=\{U\subseteq V : 0\notin \conv(A(U))=A(\conv(U))\}$ denotes the zero-avoiding complex associated to the affine map $A$.
Additionally, set
\[
	Z:=Z(A,\mathcal{K}):=X(A,\mathcal{K})^* * Y(A,\mathcal{K})=X^**Y.
\]
Recall, that $X^*$ denotes the Alexander dual simplicial complex of $X$.
The vertex set of $Z$ is the disjoint union of two copies of $V$, say
$
	\vertex(Z)=\vertex(X^*) \sqcup \vertex (Y ):= V^{(1)}\sqcup  V^{(2)}
$.
For $p_{j}\in V$ the corresponding elements in $V^{(1)}$ and $V^{(2)}$ will be denoted by $p_{j}^{(1)}$ and $p_{j}^{(2)}$ respectively.
Now, we colour (partition) the vertex set of $Z$ into $N+1$ colour classes as follows:
\[
	\vertex(Z )= V^{(1)}\sqcup  V^{(2)} = \bigsqcup_{ 0\leq j\leq N} C_{j} = \bigsqcup_{ 0\leq j\leq N,}\{p_{j}^{(1)},p_{j}^{(2)}\}.
\]
Here, as expected,  $C_{j}:=\{p_{j}^{(1)},p_{j}^{(2)}\}$.
A face $T$ of the simplicial complex $\Delta_N*\Delta_N$ is called {\em colourful transversal face} if for every $0\leq j\leq N$ it holds that:
\[
	1\leq |  T\cap C_{j}|=| \vertex (T)\cap C_{j}|\leq 2.
\]

\medskip
Now we can give the first key lemma, which can be seen as an analogue of the so-called configuration space test map scheme paradigm.

\medskip
\begin{lemma}\label{lem : affine CS/TM scheme caratheodory}
	Let $N\geq 1$ and $d\geq 1$ be integers, $V:=\vertex(\Delta_N)$, $\mathcal{S}$ a family of faces of the simplex $\Delta_N$ and $\mathcal{K}$ a simplicial complex with $V=\vertex(\mathcal{K})$.
	Suppose that $A\colon\Delta_N\longrightarrow\R^d$ is an affine map with the property that $0\notin \vertex(\Delta_N)$ and that $0\in A(S)$ for every $S\in \mathcal{S}$.

	\smallskip\noindent
	If the simplicial complex $Z=Z(A,\mathcal{K})$ has at least one colourful transversal face, then there exists a face $J$ of $\mathcal{K}$ such that $0\in A(J)$.
\end{lemma}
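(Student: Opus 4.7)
The plan is to unwind the definitions of the join $Z = \CC_A^* * \mathcal{K}$ and of the Alexander dual, and to observe that a colourful transversal face of $Z$ is precisely a combinatorial certificate for the existence of the desired face $J$. No topological input is needed; the lemma is a dictionary between the covering property of $Z$ and the original colourful Carath\'eodory-type statement.

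\textbf{Decomposition of a colourful transversal face.} Suppose $T$ is a colourful transversal face of $Z = X^* * Y$. Since $Z$ is a join, $T$ splits uniquely as $T = T_1 \sqcup T_2$ with $T_1 \in X^* = \CC_A^*$ supported on $V^{(1)}$ and $T_2 \in Y = \mathcal{K}$ supported on $V^{(2)}$. Let $F_1, F_2 \subseteq V$ denote the corresponding subsets of $V$ under the identifications $p_j^{(1)}\mapsto p_j$ and $p_j^{(2)}\mapsto p_j$. The condition $|T \cap C_j| \geq 1$ for every $0 \leq j \leq N$ says exactly that each vertex $p_j$ of $\Delta_N$ lies in $F_1$ or in $F_2$, i.e.\ $F_1 \cup F_2 = V$, or equivalently $V \setminus F_1 \subseteq F_2$.

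\textbf{Conclusion.} By definition of the Alexander dual, $F_1 \in \CC_A^*$ is equivalent to $V \setminus F_1 \notin \CC_A$, and by the defining formula \eqref{def of zero-avoiding complex} of the zero-avoiding complex this means $0 \in \conv(A(V \setminus F_1))$. Combining with the inclusion $V \setminus F_1 \subseteq F_2$ obtained above yields
\[
    0 \ \in \ \conv(A(V \setminus F_1)) \ \subseteq \ \conv(A(F_2)) \ = \ A(\conv(F_2)) \ = \ A(F_2).
\]
Since $T_2 \in \mathcal{K}$, the set $F_2$ is a face of $\mathcal{K}$, so setting $J := F_2$ produces the desired face.

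\textbf{Expected difficulty.} There is no real obstacle: the lemma is a straightforward definition-chase that promotes the covering/transversal language into the geometric conclusion. The substantive work lies downstream, in producing colourful transversal faces of $Z$ via covering arguments and the Nerve theorems; this lemma only ensures that such a face, once found, automatically supplies the face $J$ required by the general colourful Carath\'eodory problem.
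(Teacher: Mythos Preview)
Your proof is correct and follows essentially the same route as the paper's: split the colourful transversal face as $I*J$ in the join, use transversality to get $V\setminus I\subseteq J$, and use $I\in\CC_A^*$ to conclude $0\in\conv(A(V\setminus I))\subseteq A(J)$. The only point you leave implicit, which the paper makes explicit, is that $J$ is non-empty; this follows since $V\notin\CC_A^*$ (equivalently $\emptyset\in\CC_A$), so $V\setminus F_1\neq\emptyset$ and hence $F_2\neq\emptyset$.
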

\begin{proof}
	Let $T=I*J$ be a colourful transversal face of the simplicial complex $Z=X^**Y$ with $I\in X^*=\CC_A^*$ and $J\in Y=\mathcal{K}$.
	Since $X^*$ is not the simplex $\Delta_N$, the definition of a colourful transversal face implies that the face $J$ of $Y$ is non-empty.

	\medskip
	For simplicity's sake, we assume that the faces $I$ and $J$ are on the same vertex set $V=V^{(1)}=V^{(2)}$.
	An abuse of notation which follows should not cause any confusion.

	\medskip
	The colourful transversality of the face $T=I*J$ yields that $V- \vertex(I) \subseteq  \vertex(J)$ where $J\in Y =\mathcal{K}$.
	Since $I\in  X^*=\CC_A^*= \{ F\subseteq V : 0\in \conv(A(V-F)) \}$ we have that
	\[
		0\in \conv(A(V- \vertex(I)))\subseteq \conv (A( \vertex(J)))=A(J).
	\]
	Thus, we have found a face $J$ of $\mathcal{K}$ with the property that $0\in A(J)$.
\end{proof}

\medskip
We proceed with the second key lemma which we also call the {\em covering scheme}.

\medskip
\begin{lemma}[Covering scheme]\label{lem : affine CS/TM scheme 02 caratheodory}
	Let $N\geq 1$ and $d\geq 1$ be integers, $V:=\vertex(\Delta_N)$, $\mathcal{S}$ a family of faces of the simplex $\Delta_N$ and $\mathcal{K}$ a simplicial complex with $V=\vertex(\mathcal{K})$.
	Suppose that $A\colon\Delta_N\longrightarrow\R^d$ is an affine map with the property that $0\notin A(\vertex(\Delta_N))$ and that $0\in A(S)$ for every $S\in \mathcal{S}$.

	\smallskip\noindent
	If for every face $J$ of $\mathcal{K}$ it holds that $0\notin A(J)$, the then the family of induced sub-complexes
	\[
		\{ Z(A,\mathcal{K})[(V^{(1)}\sqcup  V^{(2)}) - C_{j} ]:  0\leq j\leq N \}
	\]
	is a cover of the simplicial complex $Z(A,\mathcal{K})$.
\end{lemma}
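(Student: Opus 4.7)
The plan is to reduce the statement directly to the preceding Lemma~\ref{lem : affine CS/TM scheme caratheodory} via a short combinatorial observation, arguing by contrapositive. I expect no topological obstacle here; the argument is essentially a translation between the defining property of a cover and the notion of a colourful transversal face.

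First I would unpack what it means for the proposed family to fail to cover $Z(A,\mathcal{K})$. Using the induced sub-complex convention from the introduction, a face $T$ of $Z(A,\mathcal{K})$ lies in $Z(A,\mathcal{K})[(V^{(1)}\sqcup V^{(2)})-C_j]$ precisely when $\vertex(T)\cap C_j=\emptyset$. Hence $T$ belongs to no member of the family if and only if $\vertex(T)\cap C_j\neq \emptyset$ for every $0\leq j\leq N$. Since each $C_j$ has exactly two elements, this is the same as the condition $1\leq |\vertex(T)\cap C_j|\leq 2$ for all $j$, which is by definition the condition that $T$ is a colourful transversal face of $Z(A,\mathcal{K})$.

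With this translation in hand, the contrapositive of the lemma reads: if the family of induced sub-complexes fails to cover $Z(A,\mathcal{K})$, then $Z(A,\mathcal{K})$ admits a colourful transversal face. But by Lemma~\ref{lem : affine CS/TM scheme caratheodory}, the existence of any colourful transversal face of $Z(A,\mathcal{K})$ produces a face $J$ of $\mathcal{K}$ with $0\in A(J)$, contradicting the standing hypothesis that no such face exists. This closes the argument.

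The only delicate point is to make sure the induced sub-complex convention is used consistently, so that $T\in Z(A,\mathcal{K})[(V^{(1)}\sqcup V^{(2)})-C_j]$ really is equivalent to $\vertex(T)\cap C_j=\emptyset$; once that is fixed, no further input (no nerve theorem, K\"unneth formula, or connectivity estimate) is needed, since all of the topological content has been packaged into the previous lemma.
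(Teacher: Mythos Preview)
Your proof is correct and follows essentially the same route as the paper: both arguments use the contrapositive of Lemma~\ref{lem : affine CS/TM scheme caratheodory} to conclude that $Z(A,\mathcal{K})$ has no colourful transversal face, and then observe that a face missing some colour class $C_j$ lies in the corresponding induced sub-complex. The only cosmetic difference is that the paper phrases the argument directly (take an arbitrary face and place it in some member of the cover) while you frame it as a contrapositive, but the content is identical.
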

\begin{proof}
	It follows from the assumption and Lemma~\ref{lem : affine CS/TM scheme caratheodory} that the simplicial complex $Z(A,\mathcal{K})$ has no colourful transversal faces.
	Now, let $F=I*J$ be a face of $Z(A,\mathcal{K})=X(A,\mathcal{K})^**Y(A,\mathcal{K})$.
	Since $F$ is not a colourful transversal face, there exists an integer $0\leq j\leq N$  with the property that $|F\cap C_{j}|=0$.
	Consequently, $F\in  Z(A,\mathcal{K})[(V^{(1)}\sqcup  V^{(2)}) - C_{j} ]$ and the proof is completed.
\end{proof}

\medskip
Now, as announced, we use Lemma~\ref{lem : affine CS/TM scheme 02 caratheodory} to prove all colourful Caratheodory theorems, except Theorem~\ref{th : colourful_Caratheodory_04}, which has also an analogous proof.

\medskip
Let us first briefly outline the algorithm of all the proofs which follow.

\medskip
Suppose we are given
\begin{compactitem}[\rm \quad --]
	\item integers $d\geq 1$ and $N\geq 1$,
	\item $N$-dimensional simplex $\Delta_N$ with set of vertices $V=\vertex(\Delta_N)$,
	\item $\mathcal{S}$ a family of faces of the simplex $\Delta_N$,
	\item $\mathcal{K}$ a simplicial complex with $V=\vertex(\mathcal{K})$, and
	\item $A\colon\Delta_N\longrightarrow\R^d$ and affine map which has the property that $0\notin A(\vertex(\Delta_N))$ and $0\in A(S)$ for every $S\in \mathcal{S}$.
\end{compactitem}
The goal is to find a face $J$ of $\mathcal{K}$ with the property that $0\in A(J)$.

\medskip
For this, we:
\begin{compactitem}[\rm \quad --]
	\item consider  the zero-avoiding simplicial complex $X:=X(A,\mathcal{K})=\CC_A$ of the affine map $A$, as defined in \eqref{def of zero-avoiding complex}, and the simplicial complex $Y:=Y(A,\mathcal{K})=\mathcal{K}$,
	\item form the simplicial complex $Z:=X^**Y$ with the vertex set equal to the disjoint union of two copies of $V$, that is $\vertex(Z)=\vertex(X(A,\mathcal{K})^*) \sqcup \vertex (Y(A,\mathcal{K})):= V^{(1)}\sqcup  V^{(2)}$ where $V^{(1)}=V^{(2)}=V$,
	\item consider the family $\U :=  \{ Z[(U^{(1)}\sqcup  U^{(2)}]: \emptyset\subsetneq U \subsetneq V,\, |U|=|V|-1\}$
	of induced sub-complexes of the complex $Z(A,\mathcal{K})$, and crucially
	\item \textit{prove that the family $\U$ cannot cover the simplicial complex $Z$},
	that is show  $\bigcup\U\neq  Z$.
\end{compactitem}

\subsection{Proof of Theorem~\ref{th : colourful_Caratheodory_01}}
\label{subsec : colourful Caratheodory's theorem 01}
\medskip

As a first illustration of the methods, we prove the classical colourful Caratheodory theorem~\cite[Thm.\,2.1]{Barany1981}.

\medskip\noindent
{\bf Theorem~\ref{th : colourful_Caratheodory_01}.}
{\em
	Let $N\geq 1$ and $d\geq 1$ be integers, and let $A\colon \Delta_N\longrightarrow\R^d$ be an affine map.

	\smallskip\noindent
	If there exists an integer $r\geq d+1$ and $r$ pairwise disjoint non-empty faces $F_1,\dots, F_{r}$ of the simplex $\Delta_N$ such that
	$
		0\in A(F_1)\cap\cdots\cap A(F_r)
	$,
	then there exists a selection of vertices $p_1\in F_1,\dots, p_r\in F_r$ with the property that $0\in A(F)$ where $F=\conv\{p_1,\dots,p_r\}\subseteq\Delta_N$ is the face of the simplex $\Delta_N$ spanned by the selected vertices  $p_1,\dots,p_r$.
}

\begin{proof}
	This is a proof by contradiction.
	Fix integers $N\geq 1$, $d\geq 1$,  $r\geq d+1$, and disjoint non-empty faces $F_1,\dots, F_{r}$ of the simplex $\Delta_N$.
	Suppose that $A\colon \Delta_N\longrightarrow\R^d$ is an affine map with the property that $0\notin A(\vertex(\Delta_N))$ and that
	$
		0\in A(F_1)\cap\cdots\cap A(F_r)
	$.
	Without loss of generality we can assume that $A(\Delta_N)$ affinely spans $\R^d$, and that
	\[
		0\in \relint( A(F_1))\cap\cdots\cap  \relint(A( F_r)) \subseteq A(F_1)\cap\cdots\cap A(F_r).
	\]

	\medskip
	Assume that for every selection of vertices $p_1\in F_1,\dots, p_r\in F_r$ it holds that $0\notin A(F)$ where $F=\conv\{p_1,\dots,p_r\}\subseteq\Delta_N$ is the face of the simplex $\Delta_N$ spanned by the selected vertices  $p_1,\dots,p_r$.

	\medskip
	Set $\mathcal{K}:=\vertex ({F_1})*\cdots* \vertex ({F_r})\subseteq\Delta_N$, and further
	\[
		X:=X(A,\mathcal{K})=\CC_A,\qquad Y=Y(A,\mathcal{K})=\mathcal{K}, \qquad Z:=Z(A,\mathcal{K})=X^**Y.
	\]

	\medskip
	We first compute the homology $H_*(Z;\Q)$.
	we have that $0\in \interior(\conv (A(V)))$ from the assumptions, and so by
	Lemma~\ref{lem : topology of dual of complexity complex} and get that
	\[
		\widetilde{H}_i(X^*;\Q)\cong \widetilde{H}_i(\CC_A^*;\Q) \cong
		\begin{cases}
			\Q, & i= N-d-1 ,    \\
			0,  & i\neq N-d-1 .
		\end{cases}
	\]
	Furthermore, $Y=\mathcal{K}=\vertex ({F_1})*\cdots* \vertex ({F_r})$ is the $r$-fold join of finite $0$-dimensional simplicial complexes.
	Consequently, $Y$ is $(r-2)$-connected, meaning
	\[
		\widetilde{H}_i(Y;\Q) \cong
		\begin{cases}
			\neq 0, & i= r-1,    \\
			0,      & i\neq r-1.
		\end{cases}
	\]
	Then, the homology K\"unneth formula for joins \eqref{eq:kunneth_for_join} yields
	\begin{equation}
		\label{eq : homology of Z -- C01}
		\widetilde{H}_i(Z;\Q) \cong \widetilde{H}_i(X^**Y;\Q) \cong
		\begin{cases}
			\neq 0, & i= N-d-1+r,      \\
			0,      & i\neq  N-d-1+r .
		\end{cases}
	\end{equation}

	\medskip
	Now, we conclude from the assumption on the affine map $A$ and Lemma~\ref{lem : affine CS/TM scheme 02 caratheodory} that the family of induced sub-complexes
	\[
		\U :=  \{ Z[(V^{(1)}\sqcup  V^{(2)}) - C_{j} ] \,:\,  0\leq j\leq N\}
	\]
	is a cover of $Z$.
	Since $C_{j}=\{p_{j}^{(1)},p_{j}^{(2)}\}$ and $V^{(1)}=V^{(2)}=V$, the elements of the cover can be presented as follows
	\[
		Z[(V^{(1)}\sqcup  V^{(2)}) - C_{j} ]= X^*[V-\{p_{j}\}]*Y[V-\{p_{j}\}].
	\]
	Thus, the nerve $\NN_{\U}$ of the cover $\U$ can be identified with the boundary of the simplex on the set of vertices $V$ of the simplex $\Delta_N$, that is, $\NN_{\U}\cong \partial (\Delta_{N})$.

	\medskip
	So far there is no indication that the nerve $\NN_{\U}$ of the cover $\U$ of $Z$ gives any information about the complex $Z$.
	To obtain a relationship between $\NN_{\U}$ and $Z$ we use the homological Nerve theorem of Meshulam \cite[Thm.\,2.1]{Meshulam2001} and the K\"unneth formula for joins \eqref{eq:kunneth_for_join}.

	\medskip
	Hence, we have to estimate the homological connectivity of all non-empty intersections of the elements of the cover $\U$, these are all the sub-complexes of the form $X^*[U]*Y[U]$ where $\emptyset \subsetneq  U \subsetneq  V$.
	This is done in two steps, depending on whether $0\in\R^d$ belongs to $\conv(A(V-U))$.
	\begin{compactenum}[\quad\rm (1)]
		\item If $0\in\conv(A(V-U))$, then $X^*[U]=\CC_A^*[U]$ is a $(|U|-1)$-dimensional simplex according to Lemma~\ref{lem : topology of induced sub-complexes 01 }. Hence, $X^*[U]*Y[U]$ is contractible and therefore homologically $(k-t+1)$-connected whenever $k-t+1\geq -1$.
		\item If $0\notin\conv(A(V-U))$, then $\widetilde{H}_i(\CC_A^*[U];\Q)\cong 0$ for every  $i\leq |U|-d-2$  by Lemma~\ref{lem : topology of induced sub-complexes 03.5 }. From the assumption that $0\in A(F_1)\cap\cdots\cap A(F_r)$ and assumption that $0\notin\conv(A(V-U))$ we get that
		$U \cap  \vertex(F_j)  \neq\emptyset$,
		for every $1\leq j\leq r$. In particular, $|U|\geq r$. Consequently,
		\[ Y[U]\cong \big( U\cap \vertex({F_1})\big) *\cdots *\big( U\cap \vertex ({F_r})\big) \]
		is at least homologically $(r-2)$-connected. Now, using the K\"unneth formula for joins \eqref{eq:kunneth_for_join} we conclude that $X^*[U]*Y[U]$ is homologically $(|U|+r-d-2)$-connected, or in other words for all $i\leq |U|+r-d-2$:
		\[
			\widetilde{H}_i(X^*[U]*Y[U];\Z)\cong 0.
		\]

	\end{compactenum}
	In summary, for every $\emptyset \subsetneq  U \subsetneq  V$ it holds that $\widetilde{H}_i(X^*[U]*Y[U];\Q)\cong 0$ for all $i\leq |U|+r-d-2$.

	\medskip
	Now, for each  $\emptyset \subsetneq  U \subsetneq  V$ the sub-complex $X^*[U]*Y[U]$ is the intersection of $N+1-|U|$ elements of the cover $\U=\{X^*[U']*Y[U'] : \emptyset\subsetneq U' \subsetneq V,\, |U'|=|V|-1\}$.
	Then the homological Nerve theorem  \cite[Thm.\,2.1]{Meshulam2001} implies that $\widetilde{H}_i(Z;\Q)\cong\widetilde{H}_i(\NN_{\U};\Q)$ for all $i\leq N-1$.
	Since, $\NN_{\U}\cong\partial ( \Delta_{N})\cong S^{N-1}$ we have that
	$\widetilde{H}_i(Z;\Q)\cong 0$
	for $1\leq i\leq N-2$, and more importantly, $\widetilde{H}_{N-1}(Z;\Q)\cong \Q$.
	This is a contradiction with \eqref{eq : homology of Z -- C01}, because we have shown that $\widetilde{H}_{i}(Z;\Q)\cong 0$ for all $i\neq N-d-1+r\geq N$ implying that $\widetilde{H}_{N-1}(Z;\Q)\cong 0$.
	This completes the proof of the classical colourful Carath\'eodory theorem.
\end{proof}

\subsection{Proof of Theorem~\ref{th : colourful_Caratheodory_02}}
\label{subsec : colourful Caratheodory's theorem 02}
\medskip

In this section we give a new proof of Theorem~\ref{th : colourful_Caratheodory_02} relying again on the homological Nerve theorem of Meshulam~\cite[Thm.\,2.1]{Meshulam2001}.
The following claim is proved.

\medskip\noindent
{\bf Theorem~\ref{th : colourful_Caratheodory_02}.}
{\em
Let $N\geq 1$ and $d\geq 1$ be integers, and let $A\colon \Delta_N\longrightarrow\R^d$ be an affine map.

\smallskip\noindent
If there exists an integer $r\geq d+1$ and $r$ pairwise disjoint non-empty faces $F_1,\dots, F_{r}$ of the simplex $\Delta_N$ such that
$
	0\in A(F_1)\cap\cdots\cap A(F_{r-1})
$,
then for an arbitrary vertex $p\in F_r$ there exists a selection of vertices $p_1\in F_1,\dots, p_{r-1}\in F_{r-1}$ with the property that $0\in A(F)$ where $F=\conv\{p_1,\dots,p_{r-1},p\}\subseteq\Delta_N$ is the face of the simplex $\Delta_N$ spanned by the selected vertices $p_1,\dots,p_{r-1},p$.
}

\begin{proof}
	Fix integers $d\geq 1$ and $r\geq d+1$, and an affine map $A\colon \Delta_N\longrightarrow\R^d$.
	Suppose that there exist $r$ pairwise disjoint non-empty faces $F_1,\dots, F_{r}$ of the simplex $\Delta_N$ which have the property that
	$
		0\in A(F_1)\cap\cdots\cap A(F_{r-1})
	$.
	Without loss of generality assume that $\vertex(F_1),\dots ,\vertex(F_{r})$ is a partition of $V:=\vertex(\Delta_N)$ and that $F_{r}=\{p\}$ is just a $0$-dimensional face of $\Delta_N$.
	Additionally, let
	\begin{compactitem}[\rm \quad --]
		\item $\mathcal{S}=\{F_1,\dots, F_{r-1}\}$,
		\item $\mathcal{K}=\vertex(F_1)*\cdots*\vertex(F_{r-1})*\{p\}$, a simplicial complex with $V=\vertex(\Delta_N)=\vertex(\mathcal{K})$,
		\item $X:=X(\mathcal{K},A):=\CC_A$, $Y:=Y(\mathcal{K},A):=\mathcal{K}$,   $Z:=Z(\mathcal{K},A)=X^**Y$, and
		\item $\U := \big\{ X^*[U]*Y[U] \subseteq Z : \emptyset\subsetneq U \subsetneq V,\, |U|=|V|-1\big\}$.
	\end{compactitem}
	According to Lemma~\ref{lem : affine CS/TM scheme 02 caratheodory}, it order to prove the theorem, we need to show that $\U$ cannot cover  $Z$.
	For this we study the homologies $H_{*}(Z;\FF)$ and $H_*(\NN_{\U};\FF)$  where $\FF$ is a fixed arbitrary field.

	\medskip
	First we compute homology of the simplicial complex $Z$.
	Since the simplicial complex $\mathcal{K}=\vertex(F_1)*\cdots*\vertex(F_{r-1})*\{p\}$ is just a cone and so contractible it follows that
	\begin{equation}\label{eq:homology_of_Z_caratheodory}
		\widetilde{H}_i(Z;\FF) \cong \widetilde{H}_i(X^**Y;\FF)=\widetilde{H}_i(X^**\mathcal{K};\FF)\cong 0
	\end{equation}
	for all $i\in\Z$.

	\medskip
	Next, we analyse the homology of simplicial complexes of the form $X^*[U]*Y[U]$ where $\emptyset \subsetneq  U \subsetneq  V$.
	This is done, as before, in two steps, depending on whether $0\in\R^d$ belongs to $\conv(A(V-U))$:
	\begin{compactenum}[\quad\rm (1)]
		\item If $0\in\conv(A(V-U))$, then $X^*[U]=\CC_A^*[U]$ is a $(|U|-1)$-dimensional simplex according to Lemma~\ref{lem : topology of induced sub-complexes 01 }. Hence, $X^*[U]*Y[U]$ is contractible and so $\widetilde{H}_{i}\big(X^*[U]*Y[U];\Q\big)=0$ for all $i\in\Z$.
		\item If $0\notin\conv(A(V-U))$, then $\widetilde{H}_i(\CC_A^*[U];\Q)\cong 0$ for every  $i\leq |U|-d-2$  by Lemma~\ref{lem : topology of induced sub-complexes 03.5 }. From the assumptions $
			0\in A(F_1)\cap\cdots\cap A(F_{r-1})
		$ and  $0\notin\conv(A(V-U))$ follows that
		$U \cap  \vertex(F_j)  \neq\emptyset$,
		for every $1\leq j\leq r-1$. In particular, $|U|\geq r-1$.
		Consequently,
		\[
			Y[U]\cong \big( U\cap \vertex({F_1})\big) *\cdots *\big( U\cap \vertex ({F_{r-1}})\big) *
			\begin{cases}
				\{p\},     & p\in U,    \\
				\emptyset, & p\notin U,
			\end{cases}
		\]
		is at least homologically $(r-3)$-connected.
		Now, using the K\"unneth formula for joins \eqref{eq:kunneth_for_join} we conclude that $X^*[U]*Y[U]$ is homologically $(|U|+r-d-3)$-connected.
		That is, for all $i\leq |U|+r-d-3$:
		\[
			\widetilde{H}_i(X^*[U]*Y[U];\FF)\cong 0.
		\]

	\end{compactenum}
	In summary, for every $\emptyset \subsetneq  U \subsetneq  V$ it holds that $\widetilde{H}_i(X^*[U]*Y[U];\FF)\cong 0$ for all $i\leq |U|+r-d-3$.

	\medskip
	Now, for each  $\emptyset \subsetneq  U \subsetneq  V$ the sub-complex $X^*[U]*Y[U]$ is the intersection of $N+1-|U|$ elements of the cover $\U=\{X^*[U']*Y[U'] : \emptyset\subsetneq U' \subsetneq V,\, |U'|=|V|-1\}$.
	Using the homological Nerve theorem~\cite[Thm.\,2.1]{Meshulam2001} we have that
	\[
		\widetilde{H}_{j}(Z;\FF)\cong \widetilde{H}_{j}(\NN_{\U};\FF)
	\]
	for all $0\leq j\leq N+r-d-3$, where $N+r-d-2\geq N-1$ because $r\geq d+1$.
	Additionally, if $\widetilde{H}_{N+r-d-2}(\NN_{\U};\FF)\neq 0$, then also $\widetilde{H}_{N+r-d-2}(Z;\FF)\neq 0$.
	Since, $\NN_{\U}\cong\partial ( \Delta_{N})\cong S^{N-1}$ we have reached a contradiction with \eqref{eq:homology_of_Z_caratheodory}, because $\widetilde{H}_{i}(Z;\FF)\cong 0$ for all $i\in \Z$ but $\widetilde{H}_{N-1}(\NN_{\U};\FF)\cong \FF$ and $N+r-d-3\geq N-2$.
	This completes the proof of the second version of the classical colourful Carath\'eodory theorem.
\end{proof}

\subsection{Proof of Theorem~\ref{th : colourful_Caratheodory_03}}
\label{subsec : colourful Caratheodory's theorem 03}
\medskip

In this section we reprove the following result using the methods we have developed.

\medskip\noindent
{\bf Theorem~\ref{th : colourful_Caratheodory_03}.}
{\em
	Let $N\geq 1$ be an integer, and let $A\colon \Delta_N\longrightarrow\R^d$ be an affine map.

	\smallskip\noindent
	If there exists an integer $r\geq d+1$ and $r$ pairwise disjoint non-empty faces $F_1,\dots, F_{r}$ of the simplex $\Delta_N$ such that
	$
		0\in \bigcap_{1\leq i<j\leq r} A(F_i\cup F_j)
	$,
	then there exists a selection of vertices $p_1\in F_1,\dots, p_r\in F_r$ with the property that $0\in A(F)$ where $F=\conv\{p_1,\dots,p_r\}\subseteq\Delta_N$ is the face of the simplex $\Delta_N$ spanned by the selected vertices  $p_1,\dots,p_r$.}
\begin{proof}
	Once again we follow the steps of our ``algorithm''.
	Fix integers $d\geq 1$ and $r\geq d+1$, and in addition an affine map $A\colon \Delta_N\longrightarrow\R^d$.
	Suppose that there exist $r$ pairwise disjoint non-empty faces $F_1,\dots, F_{r}$ of the simplex $\Delta_N$ such that, now,
	$
		0\in \bigcap_{1\leq i<j\leq r} A(F_i\cup F_j)
	$.
	Without loss of generality we can assume that $\vertex(F_1),\dots ,\vertex(F_{r})$ is a partition of the vertex set of the simplex $V:=\vertex(\Delta_N)$.
	Also set
	\begin{compactitem}[\rm \quad --]
		\item $\mathcal{S}=\{F_i\cup F_j :  1\leq i<j\leq r\}$,
		\item $\mathcal{K}=\vertex(F_1)*\cdots*\vertex(F_{r})$, a simplicial complex with $V=\vertex(\Delta_N)=\vertex(\mathcal{K})$,
		\item $X:=X(\mathcal{K},A):=\CC_A$, $Y:=Y(\mathcal{K},A):=\mathcal{K}$,   $Z:=Z(\mathcal{K},A)=X^**Y$, and
		\item $\U := \big\{ X^*[U]*Y[U] \subseteq Z : \emptyset\subsetneq U \subsetneq V,\, |U|=|V|-1\big\}$.
	\end{compactitem}
	According to Lemma~\ref{lem : affine CS/TM scheme 02 caratheodory} we need to show that $\U$ cannot cover the simplicial complex $Z$.
	For this we use the $(N-1)$-st homologies $H_{N-1}(Z;\Q)$ and $H_{N-1}(\NN_{\U};\Q)$.

	\medskip
	Due to the assumption $0\in \bigcap_{1\leq i<j\leq r} A(F_i\cup F_j)$ on the affine map $A\colon \Delta_N\longrightarrow\R^d$, we can assume without loss of generality that $A(\Delta_N)$ affinely spans $\R^d$.
	Now we split our proof into two cases depending on whether $0$ is contained in $\interior(\conv (A(V)))$.

	\medskip\noindent\ul{Case 1:} Assume that $0\in \interior(\conv (A(V)))$.
	Then from Lemma~\ref{lem : topology of dual of complexity complex} we  get that
	\[
		\widetilde{H}_i(X^*;\Q)\cong \widetilde{H}_i(\CC_A^*;\Q) \cong
		\begin{cases}
			\Q, & i= N-d-1 ,    \\
			0,  & i\neq N-d-1 .
		\end{cases}
	\]
	Next, $Y=\mathcal{K}=\vertex ({F_1})*\cdots* \vertex ({F_r})$ is $(r-1)$-dimensional and (homologically) $(r-2)$-connected simplicial complex, implying that
	\[
		\widetilde{H}_i(Y;\Q) \cong
		\begin{cases}
			\neq 0, & i= r-1,    \\
			0,      & i\neq r-1.
		\end{cases}
	\]
	Then, the homology K\"unneth formula for joins with field coefficients~\eqref{eq:kunneth_for_join} yields
	\begin{equation}\label{eq : homology of Z -- C03}
		\widetilde{H}_i(Z;\Q) \cong \widetilde{H}_i(X^**Y;\Q) \cong
		\begin{cases}
			\neq 0, & i= N-d-1+r,      \\
			0,      & i\neq  N-d-1+r .
		\end{cases}
	\end{equation}

	\medskip
	Next, like in the previous proofs, we focus on the homology of the induced simplicial complexes $X^*[U]*Y[U]$ where $\emptyset \subsetneq  U \subsetneq  V$.
	Again we are making two steps, depending on whether $0\in\R^d$ belongs to $\conv(A(V-U))$:
	\begin{compactenum}[\quad\rm (1)]
		\item If $0\in\conv(A(V-U))$, then $X^*[U]=\CC_A^*[U]$ is a $(|U|-1)$-dimensional simplex according to Lemma~\ref{lem : topology of induced sub-complexes 01 }. Hence, $X^*[U]*Y[U]$ is contractible and so $\widetilde{H}_{i}\big(X^*[U]*Y[U];\Q\big)=0$ for all $i\in\Z$.
		\item If $0\notin\conv(A(V-U))$, then $\widetilde{H}_i(\CC_A^*[U];\Q)\cong 0$ for every  $i\leq |U|-d-2$  by Lemma~\ref{lem : topology of induced sub-complexes 03.5 }. From the assumptions that $0\notin\conv(A(V-U))$ and $0\in \bigcap_{1\leq i<j\leq r} A(F_i\cup F_j)$ it follows that $U \cap ( \vertex(F_i)\cup \vertex(F_j))   \neq\emptyset$ for all $1\leq i<j\leq r$.
		Hence the set \[\{i : 1\leq i\leq r,\, U \cap\vertex(F_i)\neq\emptyset \}\] has at least $r-1$ elements.
		Consequently,
		\[
			Y[U]\cong \big( U\cap \vertex({F_1})\big) *\cdots *\big( U\cap \vertex ({F_{r}})\big)
		\]
		is at least (homologically) $(r-3)$-connected.
		The K\"unneth formula for joins \eqref{eq:kunneth_for_join} implies that $X^*[U]*Y[U]$ is homologically $(|U|+r-d-3)$-connected, or in other words for all $i\leq |U|+r-d-3$:
		\[
			\widetilde{H}_i(X^*[U]*Y[U];\Z)\cong 0.
		\]
	\end{compactenum}
	Summarising both steps we get that  $\widetilde{H}_i(X^*[U]*Y[U];\Q)\cong 0$ for all $i\leq |U|+r-d-3$ and all $\emptyset \subsetneq  U \subsetneq  V$.

	\medskip
	The final step is identical to the one in the previous section.
	For each  $\emptyset \subsetneq  U \subsetneq  V$ the sub-complex $X^*[U]*Y[U]$ is the intersection of $N+1-|U|$ elements of the cover $\U=\{X^*[U']*Y[U'] : \emptyset\subsetneq U' \subsetneq V,\, |U'|=|V|-1\}$.
	Using the homological Nerve theorem~\cite[Thm.\,2.1]{Meshulam2001} we have that
	\[
		\widetilde{H}_{j}(Z;\Q)\cong \widetilde{H}_{j}(\NN_{\U};\Q)
	\]
	for all $0\leq j\leq N+r-d-3$, where $N+r-d-3\geq N-2$ because $r\geq d+1$.
	Additionally, if $\widetilde{H}_{N+r-d-2}(\NN_{\U};\Q)\neq 0$, then also $\widetilde{H}_{N+r-d-2}(Z;\Q)\neq 0$.
	Since, $\NN_{\U}\cong\partial ( \Delta_{N})\cong S^{N-1}$ we have reached a contradiction with \eqref{eq : homology of Z -- C03}.
	Indeed, $\widetilde{H}_{N-1}(Z;\Q)\cong 0$ but $\widetilde{H}_{N-1}(\NN_{\U};\Q)\cong \Q$ and $N+r-d-3\geq N-2$.

	\medskip\noindent\ul{Case 2:} Assume that $0\notin \interior(\conv (A(V)))$.
	Then  Lemma~\ref{lem : topology of dual of complexity complex} implies that for every integer $i$
	\[
		\widetilde{H}_i(X^*;\Q)\cong \widetilde{H}_i(\CC_A^*;\Q) \cong 0.
	\]
	Like in the previous case , $Y=\mathcal{K}=\vertex ({F_1})*\cdots* \vertex ({F_r})$ is a $(r-1)$-dimensional and (homologically) $(r-2)$-connected simplicial complex, implying that
	\[
		\widetilde{H}_i(Y;\Q) \cong
		\begin{cases}
			\neq 0, & i= r-1,    \\
			0,      & i\neq r-1.
		\end{cases}
	\]
	Now, the homology K\"unneth formula for joins with field coefficients~\eqref{eq:kunneth_for_join} yields that for every integer $i$
	\begin{equation}\label{eq : homology of Z -- C03-2}
		\widetilde{H}_i(Z;\Q) \cong \widetilde{H}_i(X^**Y;\Q) \cong 0.
	\end{equation}

	\medskip
	The homology of the induced simplicial complexes $X^*[U]*Y[U]$, where $\emptyset \subsetneq  U \subsetneq  V$, is evaluated in the same was as in the previous case:
	\begin{compactenum}[\quad\rm (1)]
		\item If $0\in\conv(A(V-U))$, then $X^*[U]=\CC_A^*[U]$ is a $(|U|-1)$-dimensional simplex and so $\widetilde{H}_{i}\big(X^*[U]*Y[U];\Q\big)=0$ for all $i\in\Z$.
		\item If $0\notin\conv(A(V-U))$, then as before $\widetilde{H}_i(\CC_A^*[U];\Q)\cong 0$ for every  $i\leq |U|-d-2$. The assumptions  $0\notin\conv(A(V-U))$ and $0\in \bigcap_{1\leq i<j\leq r} A(F_i\cup F_j)$ imply again that $U \cap ( \vertex(F_i)\cup \vertex(F_j))   \neq\emptyset$ for all $1\leq i<j\leq r$.
		Thus, the set \[\{i : 1\leq i\leq r,\, U \cap\vertex(F_i)\neq\emptyset \}\] has at least $r-1$ elements and so
		\[
			Y[U]\cong \big( U\cap \vertex({F_1})\big) *\cdots *\big( U\cap \vertex ({F_{r}})\big)
		\]
		is at least homologically $(r-3)$-connected.
		The K\"unneth formula for joins \eqref{eq:kunneth_for_join} implies that $X^*[U]*Y[U]$ is homologically $(|U|+r-d-3)$-connected, or in other words that for all $i\leq |U|+r-d-3$:
		\[
			\widetilde{H}_i(X^*[U]*Y[U];\Z)\cong 0.
		\]
	\end{compactenum}
	In summary $\widetilde{H}_i(X^*[U]*Y[U];\Q)\cong 0$ for all $i\leq |U|+r-d-3$ and all $\emptyset \subsetneq  U \subsetneq  V$.

	\medskip
	The final step is identical to the previous case with a small alteration.
	Again, for each  $\emptyset \subsetneq  U \subsetneq  V$ the sub-complex $X^*[U]*Y[U]$ is the intersection of $N+1-|U|$ elements of the cover $\U=\{X^*[U']*Y[U'] : \emptyset\subsetneq U' \subsetneq V,\, |U'|=|V|-1\}$.
	The homological Nerve theorem~\cite[Thm.\,2.1]{Meshulam2001} implies that
	\[
		\widetilde{H}_{j}(Z;\Q)\cong \widetilde{H}_{j}(\NN_{\U};\Q)
	\]
	for all $0\leq j\leq N+r-d-3$, where $N+r-d-3\geq N-2$ because $r\geq d+1$.
	In addition, if $\widetilde{H}_{N+r-d-2}(\NN_{\U};\Q)\neq 0$, then also $\widetilde{H}_{N+r-d-2}(Z;\Q)\neq 0$.
	Since, $\NN_{\U}\cong\partial ( \Delta_{N})\cong S^{N-1}$ we obtained a contradiction with \eqref{eq : homology of Z -- C03-2}.
	Indeed, now we have that $\widetilde{H}_{i}(Z;\Q)\cong 0$ for all $i\in \Z$ but $\widetilde{H}_{N-1}(\NN_{\U};\Q)\cong \Q$ and $N+r-d-3\geq N-2$.

	\medskip
	This completes the proof of the third version of the colourful Carath\'eodory theorem.
\end{proof}

\medskip
\subsection{Proof of Theorem~\ref{th : main_result_2}}\label{sec:proof_ main_result_2}
We now prove the central result of this paper almost identically as we have proven the previous versions of the colourful Carath\'eodory theorem.

\medskip\noindent
{\bf Theorem ~\ref{th : main_result_2}.}
{\em
	Let $N\geq 1$, $d\geq 2$ and $r\geq d+1$ be integers, and let $A\colon \Delta_N\longrightarrow\R^d$ be an affine map.
	Suppose that
	\begin{compactenum}[\rm (1)]
		\item $F_1,\dots, F_{r}$ is a collection of pairwise disjoint non-empty faces of the simplex $\Delta_N$ such that
		$
			0\in A(F_1)\cap\cdots\cap A(F_r)
		$, and
		\item $\mathcal{K}$ is a simplicial sub-complex  $L*\vertex(F_3)*\cdots * \vertex(F_r)$ of $\Delta_N$ where $L$ is a path-connected sub-complex of the join complex $\vertex(F_1)*\vertex(F_2)$ with $\vertex(L)=\vertex(F_1)\cup\vertex(F_2)$.
	\end{compactenum}
	Then there exists a face $J$ of $\mathcal{K}$ with $0\in A(J)$.}
\begin{proof}
	Fix integers $N\geq 1$, $d\geq 2$ and $r\geq d+1$, as well as an affine map $A\colon \Delta_N\longrightarrow\R^d$.
	Choose  $r$ pairwise disjoint non-empty faces $F_1,\dots, F_{r}$ of the simplex $\Delta_N$ such that
	$
		0\in A(F_1)\cap\cdots\cap A(F_{r})
	$.
	Like before, without loss of generality we can assume that $\vertex(F_1),\dots ,\vertex(F_{r})$ is a partition of $V:=\vertex(\Delta_N)$.
	Additionally, let $\mathcal{K}$ be a simplicial sub-complex  $L*\vertex(F_3)*\cdots *\vertex(F_r)$ of $\Delta_N$ where $L$ is a path-connected sub-complex of the join complex $\vertex(F_1)*\vertex(F_2)$ with $\vertex(L)=\vertex(F_1)\cup\vertex(F_2)$.
	Now, set
	\begin{compactitem}[\rm \quad --]
		\item $\mathcal{S}=\{F_1,\dots ,F_{r}\}$,
		\item $\mathcal{K}=L*\vertex(F_3)*\cdots *\vertex(F_r)$,
		\item $X:=X(\mathcal{K},A):=\CC_A$, $Y:=Y(\mathcal{K},A):=\mathcal{K}$,   $Z:=Z(\mathcal{K},A)=X^**Y$, and
		\item $\U := \big\{ X^*[U]*Y[U] \subseteq Z : \emptyset\subsetneq U \subsetneq V,\, |U|=|V|-1\big\}$.
	\end{compactitem}
	Like in all previous proofs, according to Lemma~\ref{lem : affine CS/TM scheme 02 caratheodory}, in order to prove the theorem, we need to show that $\U$ cannot cover $Z$.
	Again, we study the homologies $H_{*}(Z;\Q)$ and $H_*(\NN_{\U};\Q)$.

	\medskip
	Thanks to the assumption on the affine map $A\colon \Delta_N\longrightarrow\R^d$ that
	$
		0\in A(F_1)\cap\cdots\cap A(F_{r})
	$, we can assume without loss of generality that $A(\Delta_N)$ affinely spans $\R^d$, and that
	\[
		0\in \relint( A(F_1))\cap\cdots\cap  \relint(A( F_r)) \subseteq A(F_1)\cap\cdots\cap A(F_r).
	\]
	Consequently, $0\in \interior(\conv (A(V)))$ and so, by Lemma~\ref{lem : topology of dual of complexity complex}, we  get that
	\[
		\widetilde{H}_i(X^*;\Q)\cong \widetilde{H}_i(\CC_A^*;\Q) \cong
		\begin{cases}
			\Q, & i= N-d-1 ,    \\
			0,  & i\neq N-d-1 .
		\end{cases}
	\]
	Furthermore, $Y=\mathcal{K}=L*\vertex(F_3)*\cdots *\vertex(F_r)$ is an $(r-1)$-dimensional and (homologically) $(r-2)$-connected simplicial complex, implying that $\widetilde{H}_i(Y;\Q)\cong 0$ for all $i\neq r-1$.
	Consequently, the K\"unneth formula for a join with field coefficients~\eqref{eq:kunneth_for_join} yields
	\begin{equation}\label{eq : homology of Z -- C04}
		\widetilde{H}_i(Z;\Q) \cong \widetilde{H}_i(X^**Y;\Q) \cong 0
	\end{equation}
	for all $i\neq  N+r-d-1$.

	\medskip
	Once again we consider the homology of $X^*[U]*Y[U]$ where $\emptyset \subsetneq  U \subsetneq  V$.
	This is done in two steps:
	\begin{compactenum}[\quad\rm (1)]
		\item If $0\in\conv(A(V-U))$, then $X^*[U]=\CC_A^*[U]$ is a $(|U|-1)$-dimensional simplex according to Lemma~\ref{lem : topology of induced sub-complexes 01 }. Hence, $X^*[U]*Y[U]$ is contractible and so $\widetilde{H}_{i}\big(X^*[U]*Y[U];\Q\big)=0$ for all $i\in\Z$.
		\item If $0\notin\conv(A(V-U))$, then $\widetilde{H}_i(\CC_A^*[U];\Q)\cong 0$ for every  $i\leq |U|-d-2$  by Lemma~\ref{lem : topology of induced sub-complexes 03.5 }. From the assumptions that $0\notin\conv(A(V-U))$ and  $0\in A(F_1)\cap\cdots\cap A(F_{r})$ it follows that $U \cap  \vertex(F_i)\    \neq\emptyset$ for all $1\leq i\leq r$.
		Consequently,
		\[
			Y[U]\cong L[U] *\big( U\cap \vertex ({F_{3}})\big)*\cdots *\big( U\cap \vertex ({F_{r}})\big)
		\]
		is at least (homologically) $(r-3)$-connected.
		The homology criterion for  connectivity~\cite[Thm.\,4.4.1]{Matousek2008} and the K\"unneth formula for joins \eqref{eq:kunneth_for_join} imply that $X^*[U]*Y[U]$ is homologically $(|U|+r-d-3)$-connected.
		This means that for all $i\leq |U|+r-d-3$:
		\[
			\widetilde{H}_i(X^*[U]*Y[U];\Z)\cong 0.
		\]
	\end{compactenum}
	Summarising both steps we get that  $\widetilde{H}_i(X^*[U]*Y[U];\Q)\cong 0$ for all $i\leq |U|+r-d-3$ and all $\emptyset \subsetneq  U \subsetneq  V$.

	\medskip
	The final step coincides word by word with the one in the previous section.
	For each  $\emptyset \subsetneq  U \subsetneq  V$ the sub-complex $X^*[U]*Y[U]$ is the intersection of $N+1-|U|$ elements of the cover $\U=\{X^*[U']*Y[U'] : \emptyset\subsetneq U' \subsetneq V,\, |U'|=|V|-1\}$.
	Using the homological Nerve theorem~\cite[Thm.\,2.1]{Meshulam2001} we have that
	\[
		\widetilde{H}_{j}(Z;\Q)\cong \widetilde{H}_{j}(\NN_{\U};\Q)
	\]
	for all $0\leq j\leq N+r-d-3$, where $N+r-d-3\geq N-2$ because $r\geq d+1$.
	Additionally, if $\widetilde{H}_{N+r-d-2}(\NN_{\U};\Q)\neq 0$, then also $\widetilde{H}_{N+r-d-2}(Z;\Q)\neq 0$.
	Since, $\NN_{\U}\cong\partial ( \Delta_{N})\cong S^{N-1}$ we have reached a contradiction with \eqref{eq : homology of Z -- C04}.
	Indeed, $\widetilde{H}_{i}(Z;\Q)\cong 0$ for all $i\neq  N+r-d-1$ but $\widetilde{H}_{N-1}(\NN_{\U};\Q)\cong \Q$ and $N+r-d-1\geq N$.
	This completes the proof of our main result.
\end{proof}

\providecommand{\href}[2]{#2}


\begin{thebibliography}{10}

\bibitem{Alexander1922}
J.~W. Alexander, \emph{A proof and extension of the {J}ordan-{B}rouwer
  separation theorem}, Trans. Amer. Math. Soc. \textbf{23} (1922), no.~4,
  333--349.

\bibitem{ArochaBaranyBrachoFabilaMontejano2009}
Jorge~L. Arocha, Imre B\'ar\'any, Javier Bracho, Ruy Fabila, and Luis
  Montejano, \emph{Very colorful theorems}, Discrete Comput. Geom. \textbf{42}
  (2009), no.~2, 142--154.

\bibitem{Barany1982}
Imre B\'ar\'any, \emph{A generalization of {C}arath\'eodory's theorem},
  Discrete Math. \textbf{40} (1982), no.~2-3, 141--152.

\bibitem{Barany2021}
\bysame, \emph{Combinatorial convexity}, University Lecture Series, vol.~77,
  American Mathematical Society, Providence, RI, 2021.

\bibitem{BaranyKalai2022}
Imre B\'ar\'any and Gil Kalai, \emph{Helly-type problems}, Bull. Amer. Math.
  Soc. (N.S.) \textbf{59} (2022), no.~4, 471--502.

\bibitem{Barany1981}
Imre B{\'a}r{\'a}ny, Senya~B. Shlosman, and Andr{\'a}s~Sz{\H{u}} cs, \emph{On a
  topological generalization of a theorem of {T}verberg}, J. Lond. Math. Soc.
  \textbf{23} (1981), 158--164.

\bibitem{BjornerTancer2009}
Anders Bj\"{o}rner and Martin Tancer, \emph{Note: {C}ombinatorial {A}lexander
  duality --- a short and elementary proof}, Discrete Comput. Geom. \textbf{42}
  (2009), no.~4, 586--593.

\bibitem{BlagojevicKarasev2025}
Pavle V.~M. Blagojevi\'{c} and Roman~N. Karasev, \emph{Cone colourful
  carath\'eodory theorems}, in preparation, 2025.

\bibitem{Blagojevic2011-2}
Pavle V.~M. Blagojevi\'c, Benjamin Matschke, and G\"unter~M. Ziegler,
  \emph{Optimal bounds for a colorful {T}verberg--{V}re\'cica type problem},
  Advances in Math. \textbf{226} (2011), 5198--5215.

\bibitem{Blagojevic2009}
\bysame, \emph{Optimal bounds for the colored {T}verberg problem}, J. Eur.
  Math. Soc. (JEMS) \textbf{17} (2015), 739--754.

\bibitem{BlagojevicZiegler2017}
Pavle V.~M. Blagojevi\'{c} and G\"{u}nter~M. Ziegler, \emph{Beyond the
  {B}orsuk-{U}lam theorem: the topological {T}verberg story}, A journey through
  discrete mathematics, Springer, Cham, 2017, pp.~273--341.

\bibitem{Bludov2025}
Mikhail~V. Bludov, \emph{{Balanced sets and homotopy invariant of covers}},
  arXiv:2501.05802 (2024), 19.

\bibitem{Bredon2010}
Glen~E. Bredon, \emph{Topology and {G}eometry}, Graduate Texts in Math., vol.
  139, Springer, New York, 1993.

\bibitem{DanzerEtAl1963}
Ludwig Danzer, Branko Gr\"{u}nbaum, and Victor Klee, \emph{Helly's theorem and
  its relatives}, Proc. {S}ympos. {P}ure {M}ath., {V}ol. {VII}, Amer. Math.
  Soc., Providence, RI, 1963, pp.~101--180.

\bibitem{Dold1995}
Albrecht Dold, \emph{Lectures on algebraic topology}, Classics in Mathematics,
  Springer-Verlag, Berlin, 1995, Reprint of the 1972 edition.

\bibitem{Gruber2007}
Peter~M. Gruber, \emph{Convex and discrete geometry}, Grundlehren der
  mathematischen Wissenschaften, vol. 336, Springer, Berlin, 2007.

\bibitem{Hatcher2002}
Allen Hatcher, \emph{Algebraic topology}, Cambridge University Press,
  Cambridge, 2002.

\bibitem{HolmsenPachTverberg2008}
Andreas~F. Holmsen, J\'anos Pach, and Helge Tverberg, \emph{Points surrounding
  the origin}, Combinatorica \textbf{28} (2008), no.~6, 633--644.

\bibitem{KalaiMeshulam2005}
Gil Kalai and Roy Meshulam, \emph{A topological colorful {H}elly theorem}, Adv.
  Math. \textbf{191} (2005), no.~2, 305--311.

\bibitem{Matousek2008}
Ji\v{r}\'{\i} Matou\v{s}ek, \emph{{Using the {Borsuk--Ulam} Theorem. {L}ectures
  on Topological Methods in Combinatorics and Geometry}}, Universitext,
  Springer-Verlag, Heidelberg, 2003, second corrected printing 2008.

\bibitem{McMullenShephard}
Peter McMullen and Geoffrey~C. Shephard, \emph{Convex polytopes and the upper
  bound conjecture}, London Mathematical Society Lecture Note Series, vol.~3,
  Cambridge University Press, London-New York, 1971.

\bibitem{Meshulam2001}
Roy Meshulam, \emph{The clique complex and hypergraph matching}, Combinatorica
  \textbf{21} (2001), no.~1, 89--94.

\bibitem{Munkres1984}
James~R. Munkres, \emph{Elements of algebraic topology}, Perseus Books, 1984.

\bibitem{Sarkaria1992}
Karanbir~Singh Sarkaria, \emph{Tverberg's theorem via number fields}, Israel J.
  Math. \textbf{79} (1992), no.~2-3, 317--320.

\bibitem{Tverberg1966}
Helge Tverberg, \emph{A generalization of {R}adon's theorem}, J. Lond. Math.
  Soc. \textbf{41} (1966), 123--128.

\end{thebibliography}
\end{document}